\theoremstyle{plain}
\newtheorem{theorem}{Theorem}[section]
\newtheorem{proposition}[theorem]{Proposition}
\newtheorem{corollary}[theorem]{Corollary}
\newtheorem{lemma}[theorem]{Lemma}
\theoremstyle{definition}
\newtheorem{definition}[theorem]{Definition}
\newtheorem{example}{Example}[section]
\newtheorem*{remark}{Remark}
\newcommand{\R}{\mathbb{R}}
\newcommand{\N}{\mathbb{N}}
\newcommand{\Z}{\mathbb{Z}}
\newcommand{\C}{\mathbb{C}}
\newcommand{\Deg}{\operatorname{Deg}}
\newcommand{\sgn}{\operatorname{sgn}}
\newcommand{\eps}{\varepsilon}
\newcommand{\Hidden}[1]{}
\begin{document}

\title{Every Salami has two ends}
\author{
Bobo Hua
~~~~~
Florentin M\"unch
}
\date{\today}
\maketitle

\begin{abstract}
A salami is  a connected, locally finite, 
weighted graph with non-negative Ollivier Ricci curvature and at least two ends of infinite volume. We show that every salami has exactly two ends and no vertices with positive curvature. We moreover show that every salami is recurrent and admits harmonic functions with constant gradient.
The proofs are based on extremal Lipschitz extensions, a variational principle and the study of harmonic functions.
Assuming a lower bound on the edge weight, we prove that salamis are quasi-isometric to the line, that the space of all harmonic functions has finite dimension, and that the space of subexponentially growing harmonic functions is two-dimensional. Moreover, we give a Cheng-Yau gradient estimate for harmonic functions on balls.
\end{abstract}

\tableofcontents


\section{Introduction}
An ancient saying states ``Everything has an end, only the salami has two." We will put this saying into a mathematical framework.

\begin{theorem}\label{thm:Main}
Let $G=(V,w,m,d)$ be a salami, i.e., a graph with non-negative Ollivier curvature and at least two ends of infinite volume. Then,

\begingroup
\vspace{0.8mm}
\setlength{\tabcolsep}{8pt} 
\renewcommand{\arraystretch}{1.4} 
\begin{tabular}{ c l }
 (Theorem~\ref{thm:ends})  &  $G$ has exactly two ends, \\
 (Corollary~\ref{cor:flat})&  $G$ is flat,   \\
 (Theorem~\ref{thm:recurrent})& $G$ is recurrent,   \\
 (Theorem~\ref{thm:constGradient})&  There exists a harmonic function $f \in \R^V$ with constant gradient. 
\end{tabular}
\endgroup
\end{theorem}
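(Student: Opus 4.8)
The plan is to reduce the statement to the construction of a \emph{single} non-constant harmonic function with bounded gradient, and then to let the curvature hypothesis and recurrence (Theorem~\ref{thm:recurrent}) upgrade ``bounded gradient'' to ``constant gradient''. So suppose we have produced a harmonic $f\in\R^V$ that is non-constant and Lipschitz, i.e. $\sup_x|\nabla f|(x)<\infty$, where $|\nabla f|^2=\Gamma(f)$ is the carr\'e du champ. I claim this already suffices. Indeed, non-negative Ollivier curvature is equivalent to the semigroup gradient estimate $|\nabla P_t g|\le P_t|\nabla g|$ for all $g\in\R^V$ and all $t\ge 0$, where $P_t=e^{t\Delta}$ is the heat semigroup. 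Applying this to $g=f$ and using $P_tf=f$ gives $|\nabla f|\le P_t|\nabla f|$ for every $t\ge 0$; hence $t\mapsto P_t|\nabla f|$ is non-decreasing, so $\Delta|\nabla f|\ge 0$, i.e. $|\nabla f|$ is subharmonic.

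Now I would invoke recurrence. Since $f$ is Lipschitz, $M:=\sup_x|\nabla f|(x)<\infty$, so $M-|\nabla f|$ is a \emph{nonnegative superharmonic} function. On a recurrent network the only nonnegative superharmonic functions are the constants, and Theorem~\ref{thm:recurrent} tells us $G$ is recurrent; therefore $M-|\nabla f|$ is constant, i.e. $|\nabla f|$ is constant. Because $f$ is non-constant this constant is strictly positive, and $f$ is the desired harmonic function with constant gradient. This closes the argument modulo the construction of $f$.

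For the construction I would exploit that, by Theorem~\ref{thm:ends}, $G$ has \emph{exactly} two ends. Fix a base vertex $o$ and a finite set $K$ so that $V\setminus K$ has precisely two infinite components $A$ and $B$, and for $n$ large (so that $\partial B_n\subseteq A\cup B$, where $B_n$ is the ball of radius $n$ around $o$) let $h_n$ be the harmonic function on $B_n$ with boundary values $0$ on $\partial B_n\cap A$ and $1$ on $\partial B_n\cap B$; equivalently, $h_n$ minimizes Dirichlet energy under these boundary constraints, which is the variational principle. By the maximum principle $0\le h_n\le 1$. Let $\Phi_n$ denote the flux of $h_n$ across a cut separating $A$ from $B$; since $h_n$ is harmonic in the interior this flux is independent of the (interior) cut, and it equals the effective conductance $\ca_n$ of the capacitor, which tends to $0$ as $n\to\infty$ precisely because $G$ is recurrent. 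I then \emph{normalise by the flux}, setting $f_n:=(h_n-\tfrac12)/\Phi_n$, so each $f_n$ is harmonic on $B_n$ with flux exactly $1$ across every separating cut. On the model case $G=\Z$ this recipe returns $f_n(k)=k$, which is already harmonic with constant gradient; in general one extracts a subsequential pointwise limit $f=\lim_n f_n$, harmonic on all of $V$ and of flux $1$, hence non-constant.

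The main obstacle is to show that the normalised potentials $f_n$ have a \emph{uniformly bounded} gradient, so that the limit $f$ is Lipschitz and the compactness argument produces a genuine finite function rather than degenerating (and so that the limiting flux is indeed $1$ and not lost to infinity). This is exactly where the geometry must enter: $|\nabla f_n|$ is subharmonic by the curvature estimate of the first paragraph, so by the maximum principle it is controlled by its values near $\partial B_n$, and I expect the flat, two-ended structure (Corollary~\ref{cor:flat}) together with conservation of the unit flux to pin these boundary gradients to an $n$-independent bound. Turning this into a genuine pointwise Lipschitz estimate is the heart of the matter, and it is precisely here that I would bring in the extremal Lipschitz extension machinery: use maximal and minimal Lipschitz extensions of the end-data as barriers for the $f_n$, transferring the flux normalisation into a uniform slope bound. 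Once such a uniform Lipschitz bound is established, the reduction of the first two paragraphs completes the proof.
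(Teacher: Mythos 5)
There is a genuine gap, and it is structural: your argument is circular. You invoke Theorem~\ref{thm:ends} (exactly two ends) to set up the capacitor construction, Theorem~\ref{thm:recurrent} (recurrence) both to kill nonnegative superharmonic functions and to get $\Phi_n\to 0$, and even Corollary~\ref{cor:flat} as input to the hoped-for gradient bound --- but all three are conclusions of the theorem you are proving, and you supply no independent proofs of any of them. In the paper the logical order is the reverse of yours: the harmonic functions come \emph{first}, via the extremal Lipschitz extension $S$ and the space $\mathcal F\subseteq Lip(1)$, where a variational principle (minimize $\max_K\Delta f$ over $\mathcal F$, Lemma~\ref{lem:constLaplace}) produces $f\in\mathcal F$ with $\Delta f$ constant on $K$, the infinite volume of the ends forces that constant to be zero (Lemma~\ref{lem:ZeroLaplace}), and enlarging $K$ gives harmonicity everywhere (Lemma~\ref{lem:harmonicEverywhere}). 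Recurrence is then \emph{derived} from this function (Lemma~\ref{lem:recurrence}, Theorem~\ref{thm:recurrent}), two ends from the invariance of $\mathcal H$ under varying the salami partition (Lemmas~\ref{lem:HsameFinite} and \ref{lem:SameH}, Theorem~\ref{thm:ends}), constant gradient from two ends (Theorem~\ref{thm:constGradient}), and flatness last (Corollary~\ref{cor:flat}). Your plan proves at most one of the four assertions assuming the other three.

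Two further steps would fail even within your reduction. First, the equivalence you cite is not available in this setting: for Ollivier curvature the equivalent gradient estimate is the \emph{global} Lipschitz contraction $\|\nabla P_tg\|_\infty\leq\|\nabla g\|_\infty$ (see the references in the introduction and Lemma~\ref{lem:DiscreteTimeLipContraction} for the discrete-time analogue), not the pointwise commutation $|\nabla P_tg|\leq P_t|\nabla g|$, which is a Bakry--\'Emery/$\Gamma$-calculus statement; you also identify $|\nabla f|$ with $\Gamma(f)^{1/2}$, a different object from the metric slope of Definition~\ref{def:gradients}. So subharmonicity of $|\nabla f|$ does not follow as claimed, and in the generality of Theorem~\ref{thm:Main} (no bounded geometry, arbitrary $m$, possibly unbounded $\Deg$) the semigroup manipulations themselves would need justification --- the paper's proof deliberately avoids them. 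Even granting all of it, you would obtain $|\nabla f|\equiv const$, which is weaker than $\nabla_+f=\nabla_-f=1$ as in Theorem~\ref{thm:constGradient}. Second, the uniform Lipschitz bound for the flux-normalized potentials $f_n=(h_n-\tfrac12)/\Phi_n$, whose boundary data blow up like $1/\Phi_n$, is exactly the point you concede is ``the heart of the matter'' and leave open; this is essentially the brute-force construction the authors describe in their history section and abandoned because the Lipschitz control could not be established without strong extra assumptions. The paper's mechanism sidesteps the issue entirely: functions in $\mathcal F$ are $1$-Lipschitz by construction (Lemma~\ref{lem:Sproperties}), so no a priori gradient estimate is ever needed.
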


We summarize the proof ideas. 
Suppose there are two ends $X,Y \subset V$ of infinite measure, separated by a finite $K \subset V$.
We consider the space $\mathcal F$ of functions coinciding, when fixing the function on $K$, with their maximal 1-Lipschitz extension on $Y$, and with their minimal 1-Lipschitz extension on $X$.
Using non-negative curvature and infinite measure of $X$ and $Y$, one can show that there are harmonic functions in $\mathcal F,$ see Lemma~\ref{lem:harmonicEverywhere}. This proves the existence of nonconstant Lipschitz harmonic functions on the graph, see \cite{kleiner2010new,tao2010gromov} for the results on Cayley graphs of groups. The existence of such harmonic functions has strong implications for the geometry, see e.g.  \cite{cheeger1995linear}
 for Riemannian manifolds with nonnegative Ricci curvature. We then show that the set $\mathcal H$ of harmonic functions in $\mathcal F$ stays invariant when varying the partition $(X,Y,K)$ appropriately, see Lemma~\ref{lem:HsameFinite} and Lemma~\ref{lem:SameH}. 
When supposing a third end, one can assign it to $X$ or to $Y$, and both assignments have the same function space $\mathcal H$.
From this, we conclude that the corresponding maximal and minimal Lipschitz extensions on the third end must coincide, which we apply to show that there are exactly two ends and that the gradient of harmonic functions in $\mathcal F$ is constant one. From this, we deduce  flatness and recurrence.
As a corollary, we obtain the following.
\begin{corollary}Let $G=(V,w,m,d)$ be a graph with non-negative Ollivier curvature. If $\inf_{x\in V}m(x)>0,$ then the number of ends of $G$ is at most two.
\end{corollary}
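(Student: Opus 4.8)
The plan is to reduce the corollary to the main theorem by showing that the hypothesis $\inf_{x \in V} m(x) > 0$ forces every end of the graph to have infinite volume. Recall that the volume of a set is measured by the vertex measure $m$, so if $m(x) \geq c > 0$ for all $x \in V$, then the volume of any infinite set of vertices is infinite. Since any end is, by definition, represented by an infinite connected subset of $V$ (more precisely, an infinite nested family of infinite connected components of complements of finite sets), each such representative contains infinitely many vertices and therefore has infinite volume. Thus under the stated hypothesis, \emph{every} end is an end of infinite volume.

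With this observation in hand, I would argue by contradiction. Suppose $G$ has at least three ends. Then in particular $G$ has at least two ends of infinite volume (indeed all of its ends have infinite volume by the preceding paragraph), so $G$ is a salami in the sense of the paper. By Theorem~\ref{thm:ends}, invoked through Theorem~\ref{thm:Main}, a salami has exactly two ends. This contradicts the assumption of three or more ends. Hence $G$ has at most two ends.

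The only genuine content beyond citing Theorem~\ref{thm:Main} is the passage from ``three ends'' to ``two ends of infinite volume,'' and the one subtlety I would take care with is the precise definition of an end and of the volume of an end being used in the paper. I expect the main (minor) obstacle to be confirming that the paper's notion of ``end of infinite volume'' is exactly ``an end admitting a representative of infinite $m$-measure,'' so that the lower bound $\inf_x m(x) > 0$ indeed upgrades every end to one of infinite volume. Once that bookkeeping is settled, the corollary follows immediately, since a graph satisfying the hypothesis with three or more ends would be a salami with more than two ends, which Theorem~\ref{thm:ends} forbids.
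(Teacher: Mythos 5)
Your proposal is correct and follows exactly the route the paper intends: the corollary is stated without proof precisely because, under $\inf_x m(x)>0$, every infinite connected component of $V\setminus K$ has infinite $m$-measure, so a graph with three or more ends would admit a salami partition and hence be a salami, contradicting Theorem~\ref{thm:ends}. Your care about matching the paper's definitions (ends via infinite components of complements of finite sets, salami partitions requiring $m(X)=m(Y)=\infty$) is exactly the right bookkeeping, and it goes through as you expect.
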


We can get even stronger results when assuming a lower bound on the edge weight.
\begin{theorem}
Let $G=(V,w,m,d)$ be a salami where $d$ is the combinatorial distance. Suppose there is $\eps>0$ such that $w(x,y) \geq \eps$ for all $(x,y) \in E$. Then,

\begingroup
\vspace{0.8mm}
\setlength{\tabcolsep}{8pt} 
\renewcommand{\arraystretch}{1.4} 
\begin{tabular}{ c l }
 (Theorem~\ref{thm:quasiIsom})  &  $G$ is quasi-isometric to the line, \\
  (Theorem~\ref{thm:finiteDimention})& The space of all harmonic functions has finite dimension,    \\
 (Corollary~\ref{cor:dimSubexpGrowth})&  The space of subexponentially growing harmonic functions is two-dimensional.  
\end{tabular}
\endgroup
\end{theorem}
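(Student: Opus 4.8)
The plan is to use the harmonic function with constant gradient from Theorem~\ref{thm:constGradient} as a coordinate. Write $h\in\R^V$ for this function, normalized so that $|\nabla h|\equiv 1$; in particular $h$ is $1$-Lipschitz, so $|h(x)-h(y)|\le d(x,y)$ for all $x,y\in V$. I would prove Theorem~\ref{thm:quasiIsom} by showing that $h\colon V\to\R$ is a quasi-isometry. The upper bound $|h(x)-h(y)|\le d(x,y)$ is immediate, and quasi-surjectivity follows because $G$ has exactly two ends (Theorem~\ref{thm:ends}): along a path toward either end $h$ changes by at most $1$ per step and tends to $+\infty$ on one end and $-\infty$ on the other, so $h(V)$ is coarsely dense in $\R$. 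The whole content is the matching lower bound $d(x,y)\le A\,|h(x)-h(y)|+B$, which is equivalent to a uniform bound on the diameters of the \emph{slabs} $S_n:=h^{-1}\big([n,n+1)\big)$, $n\in\Z$. Here I would use that $G$ has exactly two ends together with the hypotheses $w\ge\eps$ and $d=$ combinatorial distance: each level set $\{h=t\}$ separates the two ends, and if some slab had arbitrarily large diameter one could split it to manufacture a third infinite-volume end or violate recurrence (Theorem~\ref{thm:recurrent}), contradicting Theorem~\ref{thm:ends}. The main obstacle in this step is turning the qualitative ``two ends'' statement into a genuinely uniform bound $\sup_n \diam S_n<\infty$, equivalently $\sup_n|S_n|<\infty$.

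Granting uniformly bounded slabs $|S_n|\le M$, I would prove finite-dimensionality (Theorem~\ref{thm:finiteDimention}) by exploiting the resulting band structure. Since $h$ is $1$-Lipschitz and $d$ is combinatorial, every edge joins vertices in the same or in adjacent slabs; ordering $V$ by the slabs therefore makes the Laplacian block-tridiagonal, with blocks of size $\le M$ whose entries are controlled by $w\ge\eps$ and the non-negative curvature. Harmonicity of $f$ then reads, slab by slab, as a second-order recursion $B_{n-1}^{\!*}f_{n-1}+A_nf_n+B_nf_{n+1}=0$, where $f_n=f|_{S_n}$ and $B_n$ is the coupling between $S_n$ and $S_{n+1}$. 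If the couplings $B_n$ are invertible, this is a genuine transfer relation: the pair $(f_n,f_{n+1})$ determines $f$ on all of $V$, so the restriction map $f\mapsto(f|_{S_n},f|_{S_{n+1}})$ embeds the space of all harmonic functions into $\R^{S_n\cup S_{n+1}}$ and $\dim\le 2M$. I expect the invertibility (equivalently, squareness and full rank) of the cross-section couplings $B_n$ to be the principal difficulty, since discrete unique continuation fails in general; I would derive it from the rigidity of the flat two-ended structure (Corollary~\ref{cor:flat}), using the constant-gradient function $h$ to pair vertices of $S_n$ with vertices of $S_{n+1}$ along its gradient flow.

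Finally, for Corollary~\ref{cor:dimSubexpGrowth} the lower bound is clear: the constants and $h$ itself are linearly, hence subexponentially, growing, harmonic, and linearly independent, so the subexponential space has dimension $\ge 2$. For the upper bound I would argue within the finite-dimensional transfer picture: iterating the recursion expresses $(f_n,f_{n+1})$ as $T^{\,n}$ applied to fixed boundary data, so the growth rate of a harmonic function is governed by the spectrum of the transfer operator $T$, and subexponential growth forces the data into the generalized eigenspace of $T$ for eigenvalues of modulus $1$. The remaining task is to show this ``neutral'' subspace is exactly two-dimensional and equals $\SPAN\{1,h\}$; this is the discrete analogue of the fact that on $\Z$ the characteristic equation has the double root $1$, yielding precisely the affine harmonic functions. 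I would establish it by combining the Cheng--Yau type gradient estimate (which converts subexponential growth of $f$ into control on $|\nabla f|$) with the recurrence of $G$ (Theorem~\ref{thm:recurrent}), under which every bounded harmonic function is constant: the gradient estimate reduces a subexponential $f$ to one with bounded gradient, the difference $f-c\,h$ is then bounded and hence constant, giving $f\in\SPAN\{1,h\}$.
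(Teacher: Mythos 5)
Your skeleton (take $h\in\mathcal H_0$ as a coordinate, reduce everything to uniformly bounded slabs) is the paper's skeleton, but at each of the three quantitative pinch points you flag the difficulty without supplying the idea that closes it, and the fallbacks you sketch would fail. For the slab bound, ``two ends plus recurrence'' cannot yield $\sup_n|S_n|<\infty$: Example~\ref{ex:edgeweightnotlowerbounded} is a salami --- hence has exactly two ends (Theorem~\ref{thm:ends}) and is recurrent (Theorem~\ref{thm:recurrent}) --- whose slabs grow without bound, so the hypothesis $w\geq\eps$ must enter quantitatively rather than as a qualitative contradiction. The paper's mechanism (Lemma~\ref{lem:levelsetSize}) is a flux identity: testing $\Delta f=0$ against $1_{f^{-1}([0,r))}$ shows the upward flux through every level equals a fixed constant $s_0$, and since $\nabla_+f=1$ gives each vertex of the top slab an upward edge with $f$-increment $1$ and weight $\geq\eps$, one gets $|f^{-1}([r-1,r))|\leq s_0/\eps$. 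Separately, connectedness of the slabs --- which you use implicitly when passing from cardinality to diameter, and which your separation remark does not prove --- is itself nontrivial: the paper establishes it (Lemma~\ref{lem:levelsetConnected}) by unrolling a putative disconnection into a $\Z$-cover that preserves all cycles of length at most five, hence preserves Ollivier curvature, and has three infinite-volume ends, contradicting Theorem~\ref{thm:ends}.

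For finite dimension, your transfer-matrix scheme hinges on invertibility of the couplings $B_n$, which you yourself identify as unproven; indeed the blocks need not be square (slab sizes vary), discrete unique continuation genuinely fails, and pairing $S_n$ with $S_{n+1}$ along the gradient flow of $h$ is not injective, since $\nabla_+h=1$ only guarantees \emph{some} upward neighbor. The paper avoids this entirely: with $\Omega_n=f^{-1}([-n,n])$ one has $|\partial\Omega_n|\leq 2C$, and the restriction map $T_n:Q\to\R^{\partial\Omega_n}$ is injective for large $n$ by a normalization--compactness argument plus the maximum principle (a normalized harmonic $g_n$ vanishing on $\partial\Omega_n$ vanishes on $\Omega_n$, so a limit would be a nonzero function vanishing everywhere), giving $\dim Q\leq 2C$ with no invertibility needed. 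For the subexponential statement, the Cheng--Yau route is not available under the stated hypotheses: Theorem~\ref{thm:ChengYau} needs bounded geometry (upper bounds on $w$, $m$, $\Deg$, not assumed here) and positivity of $u$, and in any case $|\nabla u|(x_0)\leq \frac{C}{R}u(x_0)$ only bounds the gradient for functions of at most linear growth --- a subexponential function like $e^{\sqrt r}$ escapes it --- while even a bounded gradient gives linear growth, not boundedness of $f-ch$. The paper instead normalizes the flux so that $\langle u,\Delta 1_X\rangle=0$, proves a sign change of $u$ in every slab (Lemma~\ref{lem:signBothSides}) and an upper weight bound on edges transversal to the level sets (Lemma~\ref{lem:wUpperBound}), and then shows $\alpha_r=\langle\Delta 1_{C_r},u^2\rangle$ obeys $\alpha_{r+1}-\alpha_r\geq \frac{\eps}{Cc^4}\,\alpha_r$, so subexponential growth forces $\alpha_r\equiv 0$ and $u$ constant; two-dimensionality then follows from the uniqueness result Theorem~\ref{thm:H0Characterization}. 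Only your lower bound $\dim\geq 2$ is complete as written.
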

We also summarize the proof ideas. For the quasi isometry, we show that level sets of harmonic functions in $\mathcal F$ are of uniformly bounded cardinality and connected. Connectedness is proved via a topological covering argument using the result that a salami has two ends. Uniformly bounded size is proved using the constant gradient of harmonic functions and the lower bounded edge weight. For the dimension of subexponentially growing functions, we show that the variance of a harmonic function along a level set of a harmonic function in $\mathcal F$ must grow at least exponentially by a recursion estimate.

When furthermore assuming bounded geometry, we prove the Cheng-Yau gradient estimate for positive harmonic functions, see Theorem~\ref{thm:ChengYau}.

\subsection{Interpretation of the main results}

Discrete differential geometry has been a vibrant research subject in the last decade. 
Particularly, various different discrete analogs of Ricci curvature have attracted notable interest. For graphs, there are three different basic Ricci curvature notions. The Bakry-\'Emery curvature is based on a discrete Bochner formula \cite{schmuckenschlager1998curvature, lin2010ricci}, the Ollivier curvature is based on the Wasserstein distance of balls \cite{ollivier2007ricci,ollivier2009ricci, jost2014ollivier}, and the entropic Ricci curvature is based on convexity of the entropy \cite{mielke2013geodesic, erbar2012ricci}.
 
Let us briefly review the known consequences of non-negative Ricci curvature on graphs. We remark that the Bakry-\'Emery curvature is the only one for which all following properties hold true. 
\begin{itemize}
\item Gradient estimates \cite{hua2017stochastic, keller2018gradient,gong2017equivalent, lin2015equivalent, munch2017ollivier,erbar2018poincare}  for the heat semigroup $P_t$ for a suitable norm $\|\cdot\|$,
\[ \|\nabla P_t f\| \leq \| \nabla f\|. \]
\item Eigenvalue  estimates \cite{lin2010ricci,munch2019non, erbar2018poincare} for the the first nontrivial eigenvalue of $-\Delta$ in terms of the diameter,
\[
\lambda_1 \geq \frac C{(\mathrm{diameter})^2}.\]
\item Bounded Liouville property
 \cite{hua2019liouville,jost2019Liouville}, i.e., every bounded harmonic function is constant.
 \item Non-existence of expander graphs \cite{munch2019non,salez2021sparse}.
\item Polynomial volume growth and non-linear gradient estimates for $P_t$, see \cite{bauer2015li,munch2019li,horn2014volume,dier2017discrete},
\item Trivial first homology in case of positive curvature somewhere \cite{munch2020spectrally, kempton2017relationships}. 
\end{itemize}
 
A discrete Cheeger-Gromoll splitting theorem is still missing on the list. The classical Cheeger-Gromoll splitting theorem states that every complete Riemannian manifold with non-negative Ricci curvature which possesses a straight line splits out a factor $\R$, see
\cite{cheeger1971splitting}. As a consequence, any manifold $M$ with non-negative Ricci curvature has at most two ends. If it has two ends, then $M=N\times \R$ where $N$ is a compact manifold.
The proof in the Riemannian case relies on Busemann functions and on the Laplacian comparison theorem requiring  finiteness of space dimension.
It is proven in \cite{cai1995gap} that manifolds allowing for some negative Ricci curvature within a small compact set also have at most two ends.

For weighted manifolds with an infinite-dimensional curvature condition, several splitting theorems have been shown \cite{fang2009two,su2012rigidity,munteanu2014geometry}.
It is well known that a general splitting cannot be true as hyperbolic spaces can be made non-negatively curved when giving them an appropriate weight. However,
 it was proven that a non-negatively curved weighted manifold can have at most two ends if the weight grows at most sublinearly \cite[Theorem~1.1]{fang2009two}.
The case of arbitrary weights but infinite volume on two ends seems still open for weighted manifolds.

In the discrete case, without a very restrictive notion of a discrete straight line, it seems  still far away to prove a general splitting theorem for graphs. Since there are various counterexamples to strict splitting on graphs. The best one could hope for is splitting in the sense of quasi isometries, see the Examples section.
As manifolds with two ends always possess straight lines, it is natural to ask what kind of splitting one can obtain for graphs with two ends and non-negative Ricci curvature. This article is dedicated to answer this question for Ollivier curvature.

\subsection{History of proof attempts}
In the last few years, several authors tried to establish a discrete Cheeger-Gromoll theorem, usually without success due to the potential counterexamples. 
The best results so far in this direction are splitting theorems for groups and their Cayley graphs \cite{nguyen2019cheeger,bar2020conjugation}.
However, it is still open whether a splitting theorem holds for general graphs.

In March 2017, the authors of this article had a lunch break together at a conference in Sanya, and
after seeing the counterexamples for a general discrete Cheeger-Gromoll splitting theorem, the first author asked whether non-negatively graphs must have at most two ends.

It was clear from the beginning that one should try to find harmonic functions growing like the distance, so that one can deduce two ends from the maximum principle.
The first attempts to find these harmonic functions were using discrete Busemann functions and later, the distance from a minimal cut. Although the second author was very sure to have found a proof via the minimal cut approach and even announced a talk on this alleged result, it turned out that the naive Busemann function or minimal cut approach can never work as the desired harmonic functions need not be integer-valued, see Example~\ref{ex:noIntegerH0}. In the next attempt to prove two ends, the Liouville property was shown by accident \cite{jost2019Liouville}.

The first more or less successful attempt was using a brute force approach to construct harmonic functions. By the maximum principle for the gradient and appropriate normalization, it was shown that the constructed functions must be Lipschitz, but it was still possible that the functions grow too slowly. Therefore, the maximal Lipschitz extension was applied at infinity to construct Lipschitz sharp superharmonic functions. Doing this from both ends and showing recurrence showed that these functions must be harmonic. From this, two ends followed easily. However, the proof of the existence of the desired Lipschitz harmonic function was very intricate, e.g., it was not even clear that the harmonic function must have finite level sets. Moreover, strong assumptions on the weights of the graph were necessary, so the authors slowed down on this approach.

Finally during an unsuccessful attempt to prove a log-Sobolev inequality, the authors came up with the idea of applying the extremal Lipschitz extension not at infinity, but from the inside, and then noticed that one can even ensure that the Laplacian is constant inside, see Lemma~\ref{lem:constLaplace}. And this idea is what this article builds on.

\subsection{Structure of the article}

In Section~\ref{sec:Setup}, we explain the setup and give some general facts about Ollivier curvature. In Section~\ref{sec:SalamiPartitions}, we prove Theorem~\ref{thm:Main}. In Section~\ref{sec:levelSets}, we show splitting in the sense of quasi-isometries. In Section~\ref{sec:harmonic}, we investigate the space of harmonic functions. Finally in Section~\ref{sec:Examples}, we give examples to justify our assumptions, i.e., that the ends have infinite volume, and for some applications, that the edge weight is bounded from below.

\section{Setup, notations and basic facts} \label{sec:Setup}

A \emph{graph} $G=(V,w,m)$ consists of a countable set $V$, a symmetric function $w:V\times V \to [0,\infty)$ called \emph{edge weight} with $w=0$ on the diagonal, and a function $m:V\to (0,\infty)$ called vertex measure or volume.
We write $x\sim y$ if $w(x,y)>0$. The vertex degree $\Deg(x)$ for $x \in V$ is given by $\Deg(x):=\frac 1 {m(x)}\sum_{y\in V} w(x,y).$
The edge weight $w$ induces a symmetric edge relation $E=\{(x,y):w(x,y)>0\}$. We write $E(X,Y):=E \cap (X\times Y$) for $X,Y \subset V$. We denote by $|\cdot|$ the cardinality of a set. A graph is called \emph{locally finite} if $|\{y:y\sim x\}|<\infty$ for all $x\sim y$.

We say a metric $d:V^2 \to[0,\infty)$  is a path metric on a graph $G$ if
\[
d(x,y) = \inf\left\{\sum_{i=1}^n d(x_{i-1},x_i): x=x_0 \sim \ldots \sim x_n=y \right\}.
\]
We will always assume that all balls are finite.
We say $G=(V,w,m,d)$ is a graph if $(V,w,m)$ is a locally finite graph and $d$ a  path metric on $(V,w,m)$ with finite balls.
The discrete Hopf-Rinow theorem from  \cite{keller2019new} shows that finiteness of balls is equivalent to completeness of $d$ which is again equivalent to geodesic completeness, i.e., every infinite geodesic has infinite length.

Let $d_0: V \times V \to \N_0$ be the combinatorial graph distance given by
\[
d_0(x,y) := \inf \{n: x=x_0 \sim \ldots \sim x_n =y\}.
\] For $W \subset V$ and $R \in [0,\infty)$, we write $B_R(W) := \{x\in V : d(x,W) \leq R\}$. We also write $B_R(x)=\{y:d(x,y) \leq R\}$ and $S_R(x) = \{y:d(x,y) = R\}$.

A subset $W \subseteq V$ is called connected if $d(x,y)<\infty$ for all $x \sim y \in W$. This means a subset $W \subseteq V$ is  connected if the induced subgraph on $W$ is connected.
A connected component of $W\subseteq V$ is a maximal connected subset of $W$.
\begin{definition}
A graph $G=(V,w,m)$ is said to have at least $n$ ends, if there exists a finite $K\subset V$ such that $V\setminus K$ has at least $n$ infinite connected components. We say $G$ has exactly $n$ ends, if $G$ has at least $n$ ends but not at least $n+1$ ends.
\end{definition}

Let $G=(V,w,m)$ be a graph. The Laplace operator $\Delta: \R^V \to \R^V$ is given by
\[
\Delta f(x) := \frac 1 {m(x)}\sum_{y} w(x,y) (f(y)-f(x)).
\]
A function $f \in \R^V$ is called harmonic if $\Delta f = 0$.
The function space $C_c(V)$ is given by $$C_c(V) := \{f \in \R^V : f(x)= 0 \mbox{ for all but finitely many } x \in V\}.$$ For $f \in \R^V$, we write $\|f\|_\infty := \sup_{x \in V} |f(x)|$.
For any $\Omega\subset V,$ we denote by $1_\Omega$ the indicator function on $\Omega.$

For $f \in C_c(V)$ and $g \in \R^V$, we write 
\[
\langle f, g \rangle := \sum_{x \in V}f(x)g(x)m(x).
\]
We notice that $\langle f, \Delta g \rangle = \langle \Delta f, g \rangle$ for $f \in C_c(V)$ and $g \in \R^V$.

\subsection{Ollivier curvature}
Let $G=(V,w,m,d)$ be a locally finite graph with path metric $d$.
According to \cite{munch2017ollivier}, the Ollivier curvature $\kappa(x,y)$ for $x, y \in V$ with $R:=d(x,y)> 0$ is given by
\[
\kappa(x,y) := \frac 1 R \inf_{\substack{f(y)-f(x)=R\\f \in Lip(1)}} \Delta f(x) - \Delta f(y)
\]
with $Lip(1):= \{f\in \R^V: f(y)-f(x) \leq d(x,y) \mbox{ for all } x,y\in V\}$.
We set $Lip(1,K):=\{f \in \R^V:f(y)-f(x)\leq d(x,y) \mbox{ for all } x,y \in K\}$.
We remark that $\kappa$ and $Lip(1)$ depend on the choice of the path distance $d$.

\begin{definition}\label{def:flat}
We write $\kappa(x) := \inf_{y\sim x} \kappa(x,y)$ for $x \in V$.
A graph $G=(V,w,m,d)$ is called \emph{flat} if $\kappa(x)=0$ for all $x \in V$.
\end{definition}

It is shown in \cite{munch2017ollivier, veysseire2012coarse} that a lower curvature bound is equivalent to a gradient estimate for the continuous time heat equation.
We now show that a lower curvature bound also implies a gradient estimate for the time discrete random walk. This is well known 
for the combinatorial graph distance, see \cite{loisel2014ricci,bourne2018ollivier,lin2011ricci}.
Here, we give a proof in the case of a general path metric.

\begin{lemma}\label{lem:DiscreteTimeLipContraction}
Let $G=(V,w,m,d)$ be a graph and $x, y \in V$.
Let $f \in Lip(1)$. Let $0<\eps \leq \frac 1 {\Deg(x) + \Deg(y)}$ and
let $H:=id + \eps \Delta$.
Then,
\[
|Hf(y)-Hf(x)| \leq d(x,y)(1-\eps \kappa(x,y)).
\]
\end{lemma}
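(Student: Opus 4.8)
The plan is to prove the equivalent one-sided estimate that $Hg(x)-Hg(y)\le R(1-\eps\kappa(x,y))$ for \emph{every} $g\in Lip(1)$, where $R=d(x,y)$; applying this to $g=f$ and to $g=-f$ (both lie in $Lip(1)$, and $H(-f)=-Hf$) then yields the two-sided bound $|Hf(y)-Hf(x)|\le R(1-\eps\kappa(x,y))$ in the Lemma. A preliminary observation I would record is that under the laziness hypothesis $\eps\le 1/(\Deg(x)+\Deg(y))$ the operator $H=\mathrm{id}+\eps\Delta$ acts as an average, $Hg(x)=\sum_u p_x(u)g(u)$, against the probability measure $p_x$ with nonnegative center mass $p_x(x)=1-\eps\Deg(x)\ge 0$ and $p_x(u)=\eps w(x,u)/m(x)$ for $u\sim x$, and likewise for $p_y$. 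The quantity $Hg(x)-Hg(y)$ is thus a Wasserstein-type difference of the one-step random walk, but I will bound it directly rather than invoking Kantorovich duality.

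The subtlety is that the definition of $\kappa$ only controls functions of \emph{maximal} gap: if $\bar g\in Lip(1)$ happens to satisfy $\bar g(x)-\bar g(y)=R$, then $-\bar g$ is admissible in the infimum defining $\kappa(x,y)$, which gives $\Delta\bar g(x)-\Delta\bar g(y)\le -R\kappa(x,y)$ and hence immediately $H\bar g(x)-H\bar g(y)\le R(1-\eps\kappa(x,y))$. The main obstacle is therefore a general $g\in Lip(1)$ whose gap deficit $\delta:=R-(g(x)-g(y))\ge 0$ is strictly positive. My proposal is to absorb this deficit by passing to the maximal $1$-Lipschitz raise of $g$ at $x$, namely $\bar g:=g\vee c$ with the cone $c(u):=g(x)+\delta-d(x,u)$. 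A short check gives $\bar g\in Lip(1)$ (a maximum of $1$-Lipschitz functions), $\bar g(x)-\bar g(y)=R$, and $\bar g\ge g$; writing $\eta:=\bar g-g\ge 0$ one gets the crucial bounds $\eta(x)=\delta$, $\eta(y)=0$ and $0\le\eta\le\delta$ everywhere, the last because $g(x)-g(u)\le d(x,u)$ forces $c(u)-g(u)\le\delta$.

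It then remains to compare $g$ with $\bar g$. By linearity $Hg(x)-Hg(y)=[H\bar g(x)-H\bar g(y)]-[H\eta(x)-H\eta(y)]$, and since the first bracket is already $\le R(1-\eps\kappa(x,y))$, it suffices to show the correction term $H\eta(x)-H\eta(y)$ is nonnegative. This is exactly where the laziness threshold is used: since $\eta\ge 0$ I keep the center term to obtain $H\eta(x)\ge p_x(x)\eta(x)=(1-\eps\Deg(x))\delta$, while $\eta(y)=0$ together with $\eta\le\delta$ and the off-center mass $\sum_{u\sim y}p_y(u)=\eps\Deg(y)$ give $H\eta(y)\le\eps\Deg(y)\delta$; hence $H\eta(x)-H\eta(y)\ge\delta\,(1-\eps(\Deg(x)+\Deg(y)))\ge 0$ precisely when $\eps\le 1/(\Deg(x)+\Deg(y))$. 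The genuinely creative step is the construction of $\bar g$, which converts the curvature inequality for maximal-gap functions into one for arbitrary Lipschitz functions; the rest is bookkeeping, and the hypothesis on $\eps$ is exactly the point at which the deficit can be absorbed, which is why it appears in the statement.
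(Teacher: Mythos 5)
Your proof is correct, and it takes a genuinely different route from the paper's. The paper argues on the transport-plan side: using the coupling characterization of $\kappa$ from \cite[Proposition~2.4]{munch2017ollivier}, it takes an optimal plan $\rho$ realizing the curvature, augments it by the leftover lazy mass at $(x,y)$ to obtain a coupling $\rho'$ of the one-step lazy walk measures $m_x,m_y$, and then invokes Kantorovich duality for $W_1$ to conclude $Hf(x)-Hf(y)\leq W_1(m_x,m_y)\leq d(x,y)(1-\eps\kappa(x,y))$. You stay entirely on the test-function side: the variational definition of $\kappa$ directly controls $\Delta \bar g(x)-\Delta\bar g(y)$ only for functions of maximal gap, and your cone construction $\bar g=g\vee c$ with $c(u)=g(x)+\delta-d(x,u)$ reduces an arbitrary $g\in Lip(1)$ to that case; the correction $\eta=\bar g-g$ indeed satisfies $\eta(x)=\delta$, $\eta(y)=0$ (since $c(y)=g(y)$, which also gives $\bar g(x)-\bar g(y)=R$) and $0\leq\eta\leq\delta$, and the laziness threshold $\eps\leq 1/(\Deg(x)+\Deg(y))$ is used exactly once, to get $H\eta(x)-H\eta(y)\geq\delta\left(1-\eps(\Deg(x)+\Deg(y))\right)\geq 0$; applying the one-sided estimate to $\pm f$ then yields the two-sided bound, with nonnegativity of the right-hand side coming for free. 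What each approach buys: yours is more elementary and self-contained, needing neither Kantorovich duality nor the coupling formulation of $\kappa$, and it makes transparent why the laziness bound appears (it is precisely the absorption of the gap deficit $\delta$); the paper's proof is heavier machinery but produces along the way the Wasserstein contraction $W_1(m_x,m_y)\leq d(x,y)(1-\eps\kappa(x,y))$ explicitly --- though since your estimate holds uniformly over all $g\in Lip(1)$, by duality the two arguments ultimately deliver equivalent content.
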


\begin{proof}

Let $\delta_v := 1_v/m(v)$ and 
\[
m_v(w) := \langle H 1_w,\delta_v \rangle.
\]
We see that $m_x$ and $m_y$ are probability measures, particularly, $m_x(x), m_y(y) \geq 0$ as $\eps$ is small.
We note that $\int g dm_x = Hg(x)$ for all $g \in \R^V$.

We write $q(x,y) := w(x,y)/m(x)$.
Due to Kantorovich duality, we can write
\[
\kappa(x,y)= \sup_\rho \sum_{x',y'} \rho(x',y') \left( 1- \frac{d(x',y')}{d(x,y)} \right),
\]
where the supremum is taken over all $\rho:V^2 \to [0,\infty)$ satisfying $\sum_{x'} \rho(x',y')= q(y,y')$ for all $y' \in V \setminus y$ and $\sum_{y'} \rho(x',y')=q(x,x')$ for all $x' \in V \setminus x$, see \cite[Proposition~2.4]{munch2017ollivier}.
Let the supremum be attained at $\rho$.
We write 
\[\rho'(x',y') := \eps \cdot \begin{cases} \frac 1 \eps + \rho(x,y)- \sum_{x',y'}\rho(x',y') &: x'=x,y'=y, \\
\rho(x',y') &: \mbox{ otherwise}
\end{cases}
\]
which is non-negative as $\frac 1 \eps \geq \Deg(x)+ \Deg(y)$.
Moreover, $\sum_{x'}  \rho'(x',y') = m_y(y')$ for all $y' \in V$ and
$\sum_{y'} \rho'(x',y') = m_x(x')$ for all $x'\in V$. Thus,
\[
\eps \kappa(x,y)= \sum_{x',y'} \rho'(x',y') \left( 1- \frac{d(x',y')}{d(x,y)} \right).
\]
As $\sum_{x',y'} \rho'(x',y')=1$, we have
\[
\sum_{x',y'} \rho'(x',y') d(x',y') = d(x,y)(1-\eps\kappa(x,y)).
\]
Hence again by Kantorovich duality for the $\ell_1$ Wasserstein distance $W_1$,
\[
Hf(x)-Hf(y) \leq \sup_{g \in Lip(1)} \int g dm_x - \int g dm_y = W_1(m_x,m_y) = \inf_{\widetilde \rho} \sum_{x',y'} \widetilde \rho(x',y') d(x',y') \leq d(x,y)(1-\eps\kappa(x,y))
\]
where the infimum is taken over all non-negative $\widetilde \rho$ with $\sum_{x'}\widetilde \rho(x',y')=m_y(y')$ for $y' \in V$ and $\sum_{y'}\widetilde \rho(x',y')=m_x(x')$ for $x' \in V$.
This finishes the proof.
\end{proof}

We now compute the curvature of trees with combinatorial distance.
This is well known for unweighted graphs, see e.g. \cite{jost2014ollivier}. We give the proof for weighted graphs.

\begin{lemma}\label{lem:CurvatureTrees}
Let $G=(V,w,m,d_0)$ be a tree, i.e., $G$ is connected and $G\setminus \{x\}$ is disconnected for all $x \in V$. Let $x\sim y \in V$. Then, 
\[
\kappa(x,y) = 2(q(x,y) + q(y,x)) - \Deg(x)-\Deg(y).
\]
where
\[
q(x,y):=\frac{w(x,y)}{m(x)}.
\]
\end{lemma}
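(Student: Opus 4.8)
The plan is to compute the infimum in the definition of $\kappa(x,y)$ directly, exploiting the fact that deleting the edge $\{x,y\}$ from a tree splits $V$ into two pieces. Since $d_0(x,y)=1$ we have $R=1$, so I must minimize $\Delta f(x)-\Delta f(y)$ over all $f\in Lip(1)$ with $f(y)-f(x)=1$. Writing $\Delta f(x)=\sum_{z\sim x}q(x,z)(f(z)-f(x))$ and isolating the contribution of the edge $\{x,y\}$, which equals $q(x,y)$ in $\Delta f(x)$ and $-q(y,x)$ in $\Delta f(y)$ because $f(y)-f(x)=1$, I would obtain
\[
\Delta f(x)-\Delta f(y)=q(x,y)+q(y,x)+\sum_{\substack{z\sim x\\ z\neq y}}q(x,z)(f(z)-f(x))-\sum_{\substack{z\sim y\\ z\neq x}}q(y,z)(f(z)-f(y)).
\]

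Next I would use the tree structure. Deleting the edge $\{x,y\}$ yields two connected components $V_x\ni x$ and $V_y\ni y$; every neighbour $z\neq y$ of $x$ lies in $V_x$, and every neighbour $z\neq x$ of $y$ lies in $V_y$. The Lipschitz constraint forces $f(z)-f(x)\geq -d_0(z,x)=-1$ for each $z\sim x$ with $z\neq y$, and $f(z)-f(y)\leq d_0(z,y)=1$ for each $z\sim y$ with $z\neq x$. Plugging these one-sided bounds into the identity above and using $\sum_{z\sim x,\,z\neq y}q(x,z)=\Deg(x)-q(x,y)$ together with the analogous identity at $y$ gives the lower bound
\[
\Delta f(x)-\Delta f(y)\geq 2(q(x,y)+q(y,x))-\Deg(x)-\Deg(y)
\]
for every admissible $f$.

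It then remains to exhibit a single admissible $f$ attaining this bound simultaneously at all neighbours. I would set $f(v):=-d_0(v,x)$ for $v\in V_x$ and $f(v):=1+d_0(v,y)$ for $v\in V_y$; this normalizes $f(x)=0$, $f(y)=1$ and makes $f(z)-f(x)=-1$ and $f(z)-f(y)=1$ at the relevant neighbours, so both one-sided inequalities become equalities and the lower bound is met. The one remaining thing to check is that this $f$ really lies in $Lip(1)$: for $u,v$ on the same side it follows from the reverse triangle inequality $|d_0(u,x)-d_0(v,x)|\leq d_0(u,v)$, while for $u\in V_x$ and $v\in V_y$ the unique path crosses the cut edge, so $d_0(u,v)=d_0(u,x)+1+d_0(v,y)=f(v)-f(u)$, again giving Lipschitz equality. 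Hence the infimum equals the lower bound, which is exactly the claimed formula. I expect the verification of global $1$-Lipschitzness across the cut to be the main (though mild) obstacle, since it is precisely the additivity of $d_0$ along the unique path, which fails for general graphs, that makes this clean closed form special to trees.
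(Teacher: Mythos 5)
Your proof is correct and follows essentially the same route as the paper: the lower bound comes from the one-sided Lipschitz estimates $f(z)-f(x)\geq -1$ at neighbours of $x$ and $f(z)-f(y)\leq 1$ at neighbours of $y$, and the upper bound from the extremal function taking values $-1,0,1,2$ on $B_1(x)\setminus B_1(y)$, $x$, $y$, $B_1(y)\setminus B_1(x)$. The only difference is cosmetic: where the paper simply asserts that such an $f\in Lip(1)$ exists because $G$ is a tree, you verify it explicitly via the signed distance to the cut edge, which is a welcome completion of that step.
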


\begin{proof}
We first show ``$\leq$".
We write $X=B_1(x)\setminus B_1(y)$ and $Y=B_1(y) \setminus B_1(x).$
As $G$ is a tree, there exists $f \in Lip(1)$ with 
\[
f(z)=\begin{cases}
-1&:z \in X\\
0&:z = x\\
1&:z=y\\
2&: z \in Y.
\end{cases}
\]
Hence,
\[
\kappa(x,y) \leq \Delta f(x) - \Delta f(y) = ( 2q(x,y) - \Deg(x))-(\Deg(y) - 2q(y,x))
\]
showing ``$\leq$".

We finally prove ``$\geq$".
Let $f \in Lip(1)$ with $f(y)-f(x)=1$. Then, $f \leq f(y)+1$ on $Y$ and $f \geq f(x)-1$ on $X$ giving $$\Delta f(y) \leq \Deg(y) - 2q(y,x) \quad \mathrm{and}\quad \Delta f(x) \geq 2q(x,y) - \Deg(x).$$ Taking infimum over $f$ shows
$$\kappa(x,y) \geq 2(q(x,y) + q(y,x)) - \Deg(x)-\Deg(y).$$
This shows ``$\geq$" and finishes the proof.
\end{proof}

We now define several gradient notions.
\begin{definition}\label{def:gradients}
Let $G=(V,w,m,d)$ be a connected locally finite graph.
For $f \in \R^V$ and $x \in V,$ let $$\nabla_+ f(x) := \max_{y\sim x} \frac{f(y)-f(x)}{d(x,y)},\quad\nabla_-f(x):= \max_{y\sim x} \frac{f(x)-f(y)}{d(x,y)},\quad \mathrm{and}$$
$$|\nabla f|(x) := \max_{y\sim x} \frac{|f(y)-f(x)|}{d(x,y)}.$$
\end{definition}

In general, non-negative Ricci curvature has strong implications for the behavior of harmonic functions. In particular, a bounded Liouville property was proven for graphs with non-negative Ricci curvature \cite{hua2019liouville,jost2019Liouville}. We now give a maximum principle for the gradient of harmonic functions by using methods from \cite{jost2019Liouville, munch2019non}.
For convenience, we give the result only in terms of the combinatorial distance $d_0$.

\begin{theorem}
Let $G=(V,w,m,d_0)$ be a locally finite graph.
Let $W\subset V$ be finite and non-empty.
Suppose $\kappa(x,y)\geq 0$ for all $x,y \in W$ with $x\sim y$.
Let $u:B_2(W)\to \R$ satisfy $\Delta u(y)\geq \Delta u(x)$ whenever $x, y \in W$ with $x \sim y$ and  $u(y) \geq u(x)$.
Then,
\[
\max_W |\nabla u| \leq \max_{S_1(W)} |\nabla u|.
\]
\end{theorem}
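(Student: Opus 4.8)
The plan is to prove a discrete gradient maximum principle by contradiction: I will show that $|\nabla u|$ cannot take a value on $W$ strictly exceeding its maximum on the boundary ring. So suppose $M := \max_W |\nabla u| > M' := \max_{S_1(W)} |\nabla u|$. The hypotheses are invariant under $u \mapsto -u$: if $x,y \in W$ are adjacent with $(-u)(y) \ge (-u)(x)$, i.e. $u(x) \ge u(y)$, then the monotonicity condition (with roles swapped) gives $\Delta u(x) \ge \Delta u(y)$, i.e. $\Delta(-u)(y) \ge \Delta(-u)(x)$; and $|\nabla(-u)| = |\nabla u|$ with $\nabla_+(-u) = \nabla_- u$. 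Hence I may assume the maximum is realised by a positive slope: there are adjacent $x_0 \in W$ and $z$ with $u(z) - u(x_0) = M$. I first observe $z \in W$, since $|\nabla u|(z) \ge u(z) - u(x_0) = M > M'$ forces $z \notin S_1(W)$, while $z \sim x_0 \in W$ gives $z \in B_1(W) = W \cup S_1(W)$.

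Next I normalise $v := u/M$ and build a global competitor for the definition of $\kappa(x_0,z)$. Since $\Delta f(x_0) - \Delta f(z)$ depends only on the values of $f$ on $N := B_1(x_0) \cup B_1(z)$, it suffices to extend $v|_N$ to some $\tilde v \in Lip(1)$ on all of $V$ with $\tilde v = v$ on $N$; by McShane's formula $\tilde v(p) = \min_{a \in N}\bigl(v(a) + d_0(p,a)\bigr)$ this is possible exactly when $v|_N$ is $1$-Lipschitz for $d_0$, which I now check. For adjacent $a,b \in N \subseteq B_1(W)$ one has $|u(a)-u(b)| \le M$: if $a$ or $b$ lies in $W$ then the gradient bound there gives $\le M$, and otherwise both lie in $S_1(W)$, giving $\le M'$. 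For $a,b \in N$ with $d_0(a,b)=2$, a midpoint $c$ satisfies $d_0(c,W) \le 2$; if $c \in W \cup S_1(W)$ I bound through $c$ (using $|\nabla u|(c) \le M$) to get $|u(a)-u(b)| \le 2M$, while if $d_0(c,W)=2$ then both $a,b \in S_1(W)$ and, as $c$ is a neighbour of each, $|u(a)-u(c)|,|u(c)-u(b)| \le M'$, so $|u(a)-u(b)| \le 2M'$. Finally $v(N) \subseteq [v(x_0)-1, v(x_0)+2]$ has diameter $\le 3$, so pairs at distance $\ge 3$ cause no violation. Thus $\tilde v$ exists.

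Feeding $\tilde v$ into the curvature inequality through Lemma~\ref{lem:DiscreteTimeLipContraction}, with $H = id + \eps\Delta$ and $\eps \le 1/(\Deg(x_0)+\Deg(z))$, I get $1 + \eps(\Delta u(z) - \Delta u(x_0))/M = H\tilde v(z) - H\tilde v(x_0) \le 1 - \eps\kappa(x_0,z) \le 1$, hence $\Delta u(z) \le \Delta u(x_0)$. The monotonicity hypothesis applied to $x_0 \sim z$ (with $u(z) \ge u(x_0)$, both in $W$) gives the reverse inequality, so $\Delta u(x_0) = \Delta u(z)$ and $\kappa(x_0,z) = 0$, with $\tilde v$ attaining the infimum defining $\kappa(x_0,z)$. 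Equivalently, by the equality case in the proof of Lemma~\ref{lem:DiscreteTimeLipContraction}, the transport between the one-step walks $m_{x_0}$ and $m_z$ is $W_1$-optimal with $v$ a Kantorovich potential, so the optimal coupling is supported on pairs of maximal $v$-slope.

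The final and hardest step is to convert this rigidity into progress toward $S_1(W)$. The aim is to show that $z$ again carries a maximal forward slope, i.e. there is $c \sim z$ with $u(c)-u(z) = M$; granting this, $c \in W$ by the boundary argument above, and choosing at the outset the max-slope edge with the largest top value $u(z)$ yields an immediate contradiction, since $z \sim c$ would have a strictly larger top. I expect this to be the crux. The optimal-coupling rigidity forces the extremal Lipschitz function $\tilde v$ to descend at full rate on the neighbours of $x_0$ and to ascend at full rate on the neighbours of $z$ wherever the global Lipschitz constraint allows, and the delicate point is to exclude the configuration in which \emph{every} neighbour of $z$ is pinned below slope $M$ by proximity to low neighbours of $x_0$. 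This configuration is consistent with harmonicity alone, so it is precisely here that the full monotonicity condition must enter; making the outward march rigorous through the fine structure of the optimal coupling between $m_{x_0}$ and $m_z$ is where the real work lies.
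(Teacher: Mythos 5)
Your argument is correct up to and including the rigidity step: the reduction to a positive taut edge $x_0\sim z$ with both endpoints in $W$, the McShane extension of $v|_N$ (whose case checks are fine), and the conclusion via Lemma~\ref{lem:DiscreteTimeLipContraction} that $\Delta u(x_0)=\Delta u(z)$ and $\kappa(x_0,z)=0$ with the optimal coupling supported on taut pairs. But the proof is genuinely incomplete at exactly the point you flag, and moreover the propagation you propose is not what the rigidity delivers. From $\kappa(x_0,z)=0$ one gets (using the fact, cited in the paper as \cite[Theorem~2.1.2]{munch2019non}, that an optimal plan can be chosen with some mass moved over distance strictly less than $d(x_0,z)$) that some mass must also move over distance strictly \emph{greater} than $d(x_0,z)$; i.e., there is a taut pair $(x',y')$ with $x'\sim x_0$, $y'\sim z$, $u(y')-u(x')=M\,d_0(x',y')$ and $d_0(x',y')\in\{2,3\}$. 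This does \emph{not} force a neighbour $c\sim z$ with $u(c)-u(z)=M$, nor a taut edge with top value exceeding $u(z)$: for instance $u(x')=u(x_0)-M$, $u(y')=u(z)$, $d_0(x',y')=2$ is fully consistent with the rigidity, and every slope-$M$ edge it produces has top value at most $u(z)$. So your induction on the largest top value of a taut edge stalls; the configuration you hoped to exclude via monotonicity can indeed occur, and monotonicity is not the missing ingredient.

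The paper's proof repairs this by changing the quantity being maximized: it considers taut \emph{pairs} $x,y\in W$ with $u(y)-u(x)=d(x,y)$ at \emph{maximal distance} $n$ (every vertex on a geodesic then carries gradient $1$, hence lies in $W$; non-negative curvature and the monotonicity hypothesis are chained along the geodesic to get $\kappa(x,y)\geq 0$ and $\Delta u(y)\geq\Delta u(x)$ for the endpoints). The same transport rigidity then yields a taut pair at distance strictly greater than $n$, whose endpoints again have gradient $1$ and so lie in $W$, contradicting maximality of $n$ -- the induction is on taut geodesic length, not on top values, and the produced longer pair is progress by definition. Two further technical points you would need: since $u$ is only defined on $B_2(W)$, the inequality $u(y')-u(x')\leq d_0(x',y')$ can fail when geodesics between the transported pairs leave the region where $|\nabla u|\leq 1$ is known; the paper handles this by passing to the auxiliary graph $\widetilde G$ on $B_2(W)$ with edges inside $S_2(W)$ deleted, checking that the relevant distances and curvatures are unchanged (your global McShane extension handles the single-edge case but would not obviously survive the passage to pairs at distance $n>1$). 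So the architecture of your first three paragraphs is essentially the paper's equality-case analysis, but the concluding mechanism you describe as ``the real work'' is not merely unproved -- in the edge-by-edge form you state it, it is false, and the correct statement requires reorganizing the whole contradiction around maximal taut distance.
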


\begin{proof}
Let $\widetilde G$ be the graph on $B_2(W)$ with 
$\widetilde w(x,y) = 0$ if $x,y \in S_2(W)$ and $\widetilde w(x,y)=w(x,y)$ otherwise. Let $\widetilde d$ be the combinatorial distance on $\widetilde G$.
Then, $\widetilde d \geq d:=d_0$. 
We claim that for all $x\sim y$ with $x,y \in W$ and all $x'\in B_1(x)$ and $y' \in B_1(y)$, we have  $\widetilde d(x',y') = d(x',y')$.
For seeing this, we notice that $\widetilde d(x',y') \leq 3$, and every path from $x'$ to $y'$ in $G$ of length at most two has to stay in $B_2(W),$ and cannot contain any edge between vertices in $S_2(W).$ Hence, $\widetilde d(x',y') = d(x',y')$ proving the claim.
This shows that $\widetilde \kappa(x,y) = \kappa(x,y) \geq 0$ for all $x,y \in W$ with $x \sim y$.

W.l.o.g., we assume $\max_W  |\nabla u| = 1$.
We suppose the theorem is wrong, i.e., $\max_W  |\nabla u| = 1 > \max_{S_1(W)} |\nabla u|$.
Then, there exist $x,y \in W$ with $u(y)-u(x) = \widetilde d(x,y)$. 
We assume $\widetilde d(x,y)=n$ to be maximal.
Let $(x=x_0,\ldots,x_n=y)$ be a shortest path w.r.t. $\widetilde d$. By $|u(x_{i+1})-u(x_i)|\leq 1$ for $i=0,\dots, n-1,$ we have $u(x_i)=u(x)+i$ for all $ i=0,\ldots,n$.
Hence, $|\nabla u|(x_i) = 1$ showing $x_i \in W$. As $\widetilde \kappa(x_i,x_{i+1}) \geq 0$, we get $\widetilde\kappa(x,y) \geq 0$. 
By assumption of the theorem, we have $\Delta u(y) \geq \Delta u(x)$.
Thus with an optimal transport plan $\rho$,
\begin{align*}
0\leq \Delta u(y) - \Delta u(x) &= \sum_{y',x'} \rho(x',y') (u(y')-u(x')-\widetilde d(x,y)) \\
&\leq \sum_{y',x'} \rho(x',y') (\widetilde d(x',y')-\widetilde d(x,y))
\\&\leq 0,
\end{align*}
where the second inequality follows from $u(y')-u(x') \leq \widetilde d(x',y')$ as $|\nabla u| \leq 1$ on $B_1(W)$ and as every edge in a path from $x'$ to $y'$ w.r.t. $\widetilde G$ has to contain a vertex in $B_1(W)$, and the third inequality follows from $\widetilde \kappa(x,y) \geq 0$. Hence $\widetilde d(x',y') = u(y')-u(x')$ whenever $\rho(x',y') >0,$ and $\widetilde \kappa(x,y)=0$. We notice that there is an optimal transport plan $\rho$ with $\rho(x',y')>0$ for some $x',y'$ with $\widetilde d(x',y')< \widetilde d(x,y)$, see e.g.
\cite[Theorem~2.1.2]{munch2019non}.
 As $\widetilde \kappa(x,y)\leq 0$, there exists $x',y'$ with $\widetilde d(x',y') > \widetilde d (x,y)$ and $\rho(x',y')>0$. For such $x'$ and $y',$ we have $$u(y')-u(x')=\widetilde{d}(x',y')>\widetilde{d}(x,y)=n.$$ Moreover, $|\nabla u|(x')=|\nabla u|(y')=1$ implies that $x',y'\in W$. This contradicts maximality of $\widetilde d(x,y)$ and finishes the proof.
\end{proof}

Applying the theorem to harmonic functions, we obtain the following corollary.
\begin{corollary} \label{cor:maxPrinciple}
Let $G=(V,w,m,d_0)$ be a locally finite graph with non-negative Ollivier curvature. Let $x \in V$ and $u:B_{R+1}(x) \to \R$ be harmonic on $B_{R-1}(x)$.
Then,
\[
\max_{B_{R}(x)} |\nabla u| = \max_{S_R(x)} |\nabla u|.
\]
\end{corollary}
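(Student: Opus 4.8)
The plan is to deduce the corollary directly from the preceding theorem by choosing the vertex set $W := B_{R-1}(x)$ and checking that all hypotheses are met. First I would verify that $W$ is finite and non-empty: balls are finite by standing assumption, and $x \in B_{R-1}(x)$ provided $R \geq 1$ (the case $R=0$ being trivial, since then $B_0(x)=S_0(x)=\{x\}$). The curvature hypothesis $\kappa(x',y')\geq 0$ for adjacent $x',y' \in W$ is immediate from non-negative Ollivier curvature on all of $G$. The monotonicity hypothesis $\Delta u(y')\geq \Delta u(x')$ is also automatic: since $u$ is harmonic on $B_{R-1}(x)=W$, we have $\Delta u \equiv 0$ on $W$, so the inequality holds with equality for every adjacent pair in $W$, regardless of the ordering of the $u$-values. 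Finally, $u$ must be defined on $B_2(W)$, and indeed $B_2(B_{R-1}(x)) = B_{R+1}(x)$, which is exactly the domain of $u$.

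The one genuinely combinatorial point is to identify the boundary sphere $S_1(W)$ appearing in the theorem with $S_R(x)$. Using that $d_0$ is integer-valued, I would argue both inclusions: if $d_0(z,W)=1$ then $z \notin B_{R-1}(x)$ while $z$ is adjacent to some $y$ with $d_0(x,y)\leq R-1$, forcing $d_0(x,z)=R$; conversely, any $z\in S_R(x)$ lies outside $B_{R-1}(x)$ and, via the penultimate vertex of a geodesic from $x$ to $z$, is adjacent to a vertex of $B_{R-1}(x)$, so $d_0(z,W)=1$. Hence $S_1(W)=S_R(x)$.

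With these identifications, the theorem yields $\max_{B_{R-1}(x)} |\nabla u| \leq \max_{S_R(x)} |\nabla u|$. To upgrade $B_{R-1}(x)$ to $B_R(x)$, I would use the disjoint decomposition $B_R(x) = B_{R-1}(x) \sqcup S_R(x)$, valid since $d_0$ takes integer values, so that $\max_{B_R(x)}|\nabla u| = \max\{\max_{B_{R-1}(x)}|\nabla u|,\ \max_{S_R(x)}|\nabla u|\} = \max_{S_R(x)}|\nabla u|$, the last equality by the inequality just obtained. The reverse inequality $\max_{S_R(x)}|\nabla u| \leq \max_{B_R(x)}|\nabla u|$ is trivial from $S_R(x)\subseteq B_R(x)$, giving the claimed equality.

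I do not expect any serious obstacle here: the result is a specialization of the theorem, and the only care needed is the bookkeeping that $B_2(W)$ matches the domain of $u$ and that $S_1(W)=S_R(x)$, both of which rely only on the combinatorial (integer-valued) nature of $d_0$. The conceptual content, namely non-negative curvature propagating the gradient maximum outward, is entirely carried by the theorem.
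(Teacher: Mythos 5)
Your proposal is correct and matches the paper's (implicit) argument: the corollary is exactly the specialization of the preceding theorem to $W=B_{R-1}(x)$, where harmonicity makes the hypothesis $\Delta u(y)\geq \Delta u(x)$ hold trivially, and the identifications $B_2(W)=B_{R+1}(x)$ and $S_1(W)=S_R(x)$ together with the decomposition $B_R(x)=B_{R-1}(x)\sqcup S_R(x)$ upgrade the theorem's inequality to the stated equality. Your bookkeeping of these identifications (and of the trivial case $R=0$) is precisely the routine verification the paper leaves to the reader.
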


\subsection{Salamis}
We now introduce the main actor of this article.

\begin{definition}\label{def:salami}
We call $P=(X,Y,K)$ a \emph{salami partition} of a graph $G=(V,w,m)$ if 
\begin{enumerate}[(a)]
\item $X\dot \cup Y \dot \cup K= V$,
\item $K$ is finite,
\item $E(X,Y)=\emptyset$,
\item $m(X)=m(Y)=\infty$.
\end{enumerate}
We remark that $X,Y$ and $K$ may not be connected in the above definition. We call a salami partition \emph{connected}, if $K$ is connected.
We call a connected, locally finite graph $G=(V,w,m,d)$ a \emph{salami}, if it has non-negative Ollivier curvature, and if there is a salami partition. 

\end{definition}

\begin{lemma}\label{lem:connectedSalami} 
Let $G=(V,w,m,d)$ be a salami, and let $x \in V$. Then, there exists a connected salami partition $(X,Y,K)$ with $x \in K$. 
\end{lemma}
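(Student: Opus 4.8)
The plan is to start from the salami partition $(X',Y',K')$ furnished by the definition of a salami (Definition~\ref{def:salami}) and to thicken its separating set $K'$ into a finite \emph{connected} set that in addition contains the prescribed vertex $x$, while leaving the two infinite-volume sides essentially untouched. Since a salami is by definition connected, any two vertices are joined by a finite path, and this is the only structural input the construction needs.

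Concretely, first I would use connectedness to pick, for each $v\in K'$, a finite path $\gamma_v$ from $v$ to $x$, and set
\[
\widehat K := \{x\}\cup K'\cup\bigcup_{v\in K'}\gamma_v,
\]
identifying each $\gamma_v$ with its finite vertex set. As $K'$ is finite, $\widehat K$ is a finite union of finite sets, hence finite; every vertex of $\widehat K$ lies on some $\gamma_v$ (or equals $x$) and is therefore joined to $x$ within $\widehat K$, so the induced subgraph on $\widehat K$ is connected; and clearly $x\in\widehat K$ and $K'\subseteq\widehat K$. Then I would define
\[
K:=\widehat K,\qquad X:=X'\setminus\widehat K,\qquad Y:=Y'\setminus\widehat K .
\]

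It remains to verify that $(X,Y,K)$ is a connected salami partition with $x\in K$, which is routine. Because $K'\subseteq K$, every vertex of $V=X'\,\dot\cup\,Y'\,\dot\cup\,K'$ lands in exactly one of $X$, $Y$, $K$, giving axiom (a); axiom (b) holds since $K=\widehat K$ is finite; and axiom (c) follows from $X\subseteq X'$, $Y\subseteq Y'$ together with $E(X',Y')=\emptyset$. The only step that actually consumes the hypotheses, and the closest thing to a pitfall, is axiom (d): here I use that $\widehat K$ is finite and that $m$ is finite on each vertex, so $m(X)=m(X')-m(X'\cap\widehat K)=\infty$ because $m(X'\cap\widehat K)<\infty$, and likewise $m(Y)=\infty$. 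In other words, enlarging the separator by finitely many vertices removes only finite volume from each side and thus preserves the infinitude of both. Connectedness of $K$ and $x\in K$ hold by construction, so no curvature or metric input beyond plain connectedness of $G$ is required.
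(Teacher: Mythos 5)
Your proof is correct and follows essentially the same strategy as the paper: enlarge the given separating set to a finite connected set containing $x$, and observe that all four axioms of a salami partition survive because only finitely much measure is removed from each side. The only cosmetic difference is that the paper achieves connectedness by taking $K := B_R(K_0)$ with $R = \max_{v,w \in K_0} d(v,w)$ (after assuming $x \in K_0$ w.l.o.g.), whereas you glue in explicit finite paths from each vertex of $K'$ to $x$ --- your variant uses only plain connectedness of $G$, while the ball construction additionally leans on the path-metric and finite-ball hypotheses.
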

\begin{proof}
Let $(X_0,Y_0,K_0)$ be a salami partition. W.l.o.g., $x \in K_0$.
Let $R := \max_{v,w \in K_0} d(v,w) < \infty$.
Now let $K:=B_R(K_0)$. We notice that $K$ is connected.
One can easily check that $(X_0 \setminus K, Y_0 \setminus K,K)$ is a salami partition which finishes the proof.
\end{proof}

\section{Salami partitions, Lipschitz extensions and harmonic functions} \label{sec:SalamiPartitions}

The aim of this section is to prove Theorem~\ref{thm:Main}.
To this end, we introduce extremal Lipschitz extensions and employ variational principles to investigate Lipschitz harmonic functions.

\subsection{Extremal Lipschitz extension}

In this subsection, we introduce the extremal Lipschitz extension operator $S:Lip(1,K) \to \R^V$ and its image $\mathcal F$. 
We recall $Lip(1,K)=\{f \in \R^V: f(y)-f(x) \leq d(x,y) \mbox{ for all } x,y \in K\}$.
We will give a variational principle to show that $\mathcal F$ contains harmonic functions.
\begin{definition}
Let $P=(X,Y,K)$ be a salami partition. 
For $f \in Lip(1,K)$, we set
\[
S(P)f(v) := Sf(v):= \begin{cases}
f(v)&:v \in K\\
\sup_{w \in K} f(w)-d(v,w) &: v \in X\\
\inf_{w \in K} f(w)+d(v,w) &: v \in Y.
\end{cases}
\]

We define
\[
\mathcal F := \mathcal F(P) := S(P)(Lip(1,K))
\]
and 
\[
\mathcal H := \mathcal H(P) := \{f \in \mathcal F(P): \Delta f = 0\}.
\]
\end{definition}

We will later show that $\mathcal H$ is non-empty.
We next study some basic properties of the extremal Lipschitz extension operator $S$ and its image $\mathcal F$.

\begin{lemma}\label{lem:Sproperties}
Let $G=(V,w,m,d)$ be a graph with a salami partition $P=(X,Y,K)$. Let $f,g \in Lip(1,K)$. Then,
\begin{enumerate}[(1)]
\item $S^2 f = Sf$,
\item $ \mathcal F \subseteq Lip(1)$,
\item $S f \leq Sg$ whenever  $f|_K \leq g|_K $,
\item For every $x \in X$, there exists $v \in K$ with $Sf(v)-Sf(x)=d(v,x)$,
\item For every $y \in Y$, there exists $v \in K$ with $Sf(y)-Sf(v)=d(v,x)$,
\item $Sf = \inf\{h \in Lip(1):h|_K = f|_K  \}$ on $X\cup K$,
\item $Sf = \sup\{h \in Lip(1):h|_K = f|_K  \}$ on $Y\cup K$,
\item $\mathcal F = \{h \in Lip(1) : (\nabla_+h)|_X =1, \;  (\nabla_-h)|_Y =  1, \; \lim_Y h = \infty ,\; \lim_X h = -\infty \}$, 
\item $(Sf)^{-1}(I)$ is finite for all bounded intervals $I$,
\end{enumerate} where $\lim_X h$ denotes $\lim_{x\in X, x\to \infty} h(x).$
\end{lemma}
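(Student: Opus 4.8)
The plan is to treat the nine assertions in groups, using throughout that $K$ is finite, so every $\sup_{w\in K}$ and $\inf_{w\in K}$ in the definition of $S$ is attained. Granting this, (4) and (5) are immediate: for $x\in X$ choose $v\in K$ attaining $Sf(x)=f(v)-d(v,x)$, so that $Sf(v)-Sf(x)=f(v)-\big(f(v)-d(v,x)\big)=d(v,x)$, and symmetrically for $y\in Y$ (where the $d(v,x)$ displayed in (5) should read $d(v,y)$). For idempotence (1) I would note that $Sf|_K=f|_K$ and that the value of $S$ depends only on the restriction to $K$, so feeding $Sf$ back into the formula reproduces $Sf$. Monotonicity (3) I would read off termwise from the defining suprema and infima, since $f|_K\le g|_K$.

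Next I would set up the standard extremal-extension picture. Writing $g_{\min}(v):=\sup_{w\in K}(f(w)-d(v,w))$ and $g_{\max}(v):=\inf_{w\in K}(f(w)+d(v,w))$, a one-line triangle-inequality computation shows each is globally $1$-Lipschitz, and each agrees with $f$ on $K$ (using $f\in Lip(1,K)$). The McShane--Whitney argument---any $h\in Lip(1)$ with $h|_K=f|_K$ satisfies $h(v)\ge f(w)-d(v,w)$ and $h(v)\le f(w)+d(v,w)$ for every $w\in K$---identifies $g_{\min}$ and $g_{\max}$ as the pointwise smallest and largest such extensions, which is exactly (6) and (7), since $Sf=g_{\min}$ on $X\cup K$ and $Sf=g_{\max}$ on $Y\cup K$. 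For (2) I would reduce membership in $Lip(1)$ to a condition on edges: because $d$ is a path metric, a function that is $1$-Lipschitz across every edge is $1$-Lipschitz. Since $E(X,Y)=\emptyset$, every edge lies entirely in $X\cup K$ or entirely in $Y\cup K$; on the former $Sf=g_{\min}$ and on the latter $Sf=g_{\max}$, both $1$-Lipschitz, so $Sf\in Lip(1)$.

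The heart of the lemma is the characterization (8), proved by double inclusion. For ``$\subseteq$'' let $h=Sf$. Lipschitzness is (2). To get $\nabla_+h=1$ on $X$, take $x\in X$ and $v\in K$ from (4); along a shortest path $x=x_0\sim\dots\sim x_n=v$ (which exists because balls are finite) the chain $d(v,x)=h(v)-h(x)=\sum_i\big(h(x_i)-h(x_{i-1})\big)\le\sum_i d(x_{i-1},x_i)=d(v,x)$ forces equality at each step, so $h(x_1)-h(x_0)=d(x_0,x_1)$ and $\nabla_+h(x)=1$; the claim $\nabla_-h=1$ on $Y$ is symmetric via (5). The limits come from $h(x)\le\max_Kf-d(x,K)$ on $X$ together with finiteness of $B_R(K)$ for every $R$, which forces $d(x,K)\to\infty$ as $x\to\infty$ in $X$, hence $\lim_Xh=-\infty$; symmetrically $\lim_Yh=\infty$.

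For ``$\supseteq$'' I would show any admissible $h$ equals $S(h|_K)$, and this is the step I expect to be the main obstacle. On $K$ it is trivial, and for $x\in X$ the bound $h(x)\ge S(h|_K)(x)$ is automatic from $h\in Lip(1)$. For the matching upper bound I would run a gradient-ascent chase: using $\nabla_+h(x_i)=1$ repeatedly, build $x=x_0\sim x_1\sim\cdots$ in $X$ with $h(x_{i+1})-h(x_i)=d(x_i,x_{i+1})>0$; since $\lim_Xh=-\infty$ the superlevel set $\{x\in X:h(x)\ge h(x_0)\}$ is finite and $h$ is strictly increasing along the chase, so the path must leave $X$ after finitely many steps, and because $E(X,Y)=\emptyset$ it can only land on some $v\in K$. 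Summing the unit-rate increments gives $h(v)-h(x)=d(x,v)$, whence $h(x)=h(v)-d(x,v)\le S(h|_K)(x)$; the $Y$-side uses a symmetric descent chase. Finally (9) follows from the two limits of (8): for a bounded interval $I=[a,b]$ the set $(Sf)^{-1}(I)$ meets $K$ in finitely many points, meets $X$ inside the finite set $\{h\ge a\}$, and meets $Y$ inside the finite set $\{h\le b\}$. The only genuinely delicate point is guaranteeing both termination and the $K$-landing of the ascent/descent chase, for which the no-$X$--$Y$-edge hypothesis and the infinite-volume-driven limits are both essential.
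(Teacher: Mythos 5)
Your proof is correct, and on the lemma's core, item (8), it coincides with the paper's argument: the ``$\supseteq$'' direction in the paper is exactly your ascent chase, phrased there as a maximal sequence $(x=x_0,x_1,\dots)$ in $X$ with $f(x_{n+1})=f(x_n)+d(x_n,x_{n+1})$, finite because superlevel sets of $f$ in $X$ are finite, whose last element has an attaining neighbor that must lie in $K$ since $E(X,Y)=\emptyset$; the limits $\lim_X Sf=-\infty$, $\lim_Y Sf=+\infty$ come from the same bound $Sf(x)\leq \sup_K f - d(x,K)$ plus finiteness of balls, and your derivation of (9) from these limits is an equivalent repackaging of the paper's direct estimate $|Sf(x)|\geq d(x,K)-\sup_K|f|$. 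Where you genuinely reorganize is the logic around (2), (6), (7): the paper proves (2) first by an exhaustive case verification of the Lipschitz inequality over all pair types, including the mixed pairs $x\in X$, $y\in Y$ via $\inf_{w\in K}\bigl(d(y,w)+d(x,w)\bigr)=d(x,y)$ (which itself uses the path-metric property and $E(X,Y)=\emptyset$, since any near-geodesic from $X$ to $Y$ must pass through $K$), and only then deduces (6), (7); you instead establish (6), (7) first via McShane--Whitney (the extremal extensions $g_{\min}$, $g_{\max}$ are globally $1$-Lipschitz by the triangle inequality alone, so no circularity arises), and recover (2) by reducing global Lipschitzness to edgewise Lipschitzness through the path-metric property, with $E(X,Y)=\emptyset$ ensuring every edge lies in $X\cup K$ or $Y\cup K$, where $Sf$ agrees with $g_{\min}$ or $g_{\max}$ respectively. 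Both routes invoke the same two hypotheses (path metric, no $X$--$Y$ edges); yours packages them once in the edge reduction and thereby eliminates the cross-term casework, at the modest cost of needing shortest paths (or near-geodesics) to exist, which finite balls do guarantee and which your telescoping proof of $\nabla_+ Sf=1$ on $X$ also uses, in place of the paper's inf-sup identity $\inf_{v\in K}\sup_{y\sim x}\frac{d(v,x)-d(v,y)}{d(x,y)}=1$. Minor points, all fine: you correctly flag the typo $d(v,x)$ for $d(v,y)$ in (5); your chase implicitly uses local finiteness so that the maximum defining $\nabla_+h$ is attained; and you rightly note $h\in Lip(1)$ makes $h|_K\in Lip(1,K)$, so $S(h|_K)$ is defined.
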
 

\begin{proof}
We first prove $(1)$. We aim to show $S^2 f = Sf$ for $f \in Lip(1,K)$. We have $S^2f=Sf=f$ on $K$. As $Sf$ only depends on $f|_K$, this shows that $S^2f=Sf$ proving $Sf=f$ for all $f \in \mathcal F$.

We next prove $(2)$. Let $f \in Lip(1,K)$.
Let $k,k' \in K,x,x'\in X$ and $y \in Y$. Then,
\begin{align*}
Sf(k)-Sf(k') &= f(k)-f(k') \leq d(k,k'),\\
Sf(x)-Sf(k)& = \sup_{w\in K} f(w) - f(k) - d(x,w) \leq \sup_{w\in K} d(k,w) - d(x,w) \leq d(x,k),\\
Sf(k)-Sf(x) &= f(k) -\sup_{w\in K} (f(w) -  d(x,w)) \leq f(k)-f(k) + d(x,k) \leq d(x,k),\\
Sf(y)-Sf(x) &\leq \inf_{w \in K} f(w) + d(y,w) - f(w) + d(x,w) = \inf_{w \in K}  d(y,w) + d(x,w)  =d(x,y),\\
Sf(x)-Sf(y) &= \sup_{w_1,w_2 \in K} f(w_1)-f(w_2) -d(x,w_1) - d(y,w_2) \\&\leq \sup_{w_1,w_2 \in K} d(w_1,w_2) -d(x,w_1) - d(y,w_2) \leq d(x,y),\quad \mathrm{and}\\
Sf(x)-Sf(x') &\leq \sup_w  f(w) -d(x,w) - f(w) + d(x',w) \leq d(x,x').
\end{align*}
The remaining cases can be proven analogously. This proves $Sf \in Lip(1)$, and thus, $\mathcal F \subseteq Lip(1)$.

The proof of $(3)$ is easy as supremum and infimum are monotonous.

Assertions $(4)$ and $(5)$ also follow easily as the supremum and infimum in the definition of $Sf$ are attained due to finiteness of $K$.
We next prove $(6)$. As $Sf \in Lip(1)$, we have $$Sf \geq \inf\{g \in Lip(1):g|_K = f|_K  \}.$$
Moreover if $g \in Lip(1)$ with $g|_K=f|_K$, then $g(x) \geq g(v) - d(x,v)$ for all $v \in K$. Taking supremum over $v$ shows $g \geq Sf$ on $X \cup K$. Taking the infimum, we have $$Sf \leq \inf\{g \in Lip(1):g|_K = f|_K  \}\quad \mathrm{on}\quad X\cup K.$$ This proves $(6)$.
The proof of $(7)$ works similarly.

We next prove $(8)$. We first show ``$\subseteq$".
We have already proven $\mathcal F \subset Lip(1)$.
We  next show $\nabla_+ Sf =1$ on $X$.
For $x \in X$, we have
\begin{align*}
1\geq \nabla_+ Sf(x) &= \sup_{y\sim x,w\in K} \frac{(Sf(w) - d(w,y)) - \sup_{v \in K} (Sf(v) - d(v,x) )}{d(x,y)}\geq \inf_{v \in K}\sup_{y\sim x}   \frac{(d(v,x) - d(v,y) )}{d(x,y)}  =1,
\end{align*}
where the second inequality follows by setting $w:=v$, and the last identity holds as $d$ is a path metric.
Similarly, one has $\nabla_- Sf = 1$ on $Y$.
We now show $\lim_X Sf = - \infty$.
As $Sf(x) \leq -d(x,K) + \sup_K f $ for $x \in X$, and as $K$ is finite and $X$ infinite, we get $\lim_X Sf = - \infty$. Similarly, we get $\lim_Y Sf = \infty$. 
 This shows ``$\subseteq$".

We next show ``$\supseteq$". Suppose that $$f \in \{h \in Lip(1) : (\nabla_+h)|_X =1, \;  (\nabla_-h)|_Y =  1, \; \lim_Y h = \infty ,\; \lim_X h = -\infty \}.$$
We aim to show $Sf = f$.
Let $x \in X$. By $(6)$, we have $Sf(x) \leq f(x)$.
We now show the reverse inequality. Let $(x=x_0,\ldots)$ be a maximal sequence in $X$ with $x_{n+1} \sim x_n$ and $f(x_{n+1}) = f(x_n) + d(x_n,x_{n+1})$ for all $n$. 
We note that $f(x)-f(x_n)=d(x,x_n)$ as $f \in Lip(1)$.
Since $\sup_X f < \infty$ and as balls are finite, the sequence has to be finite. Let $x_n$ be the last element. As $\nabla_+ f = 1$ on $X$, there exists $x_{n+1}\sim x_n$ with $f(x_{n+1}) = f(x_n)+d(x_n,x_{n+1})$. As $x_n$ is the last element, we infer $x_{n+1} \in K$.
Thus, 
\[f(x) = f(x_{n+1}) - d(x_n,x_{n+1}) - d(x,x_n) \leq f(x_{n+1}) - d(x,x_{n+1}) \leq Sf(x).\] This shows $Sf = f$ on $X$. Similarly, we get $Sf=f$ on $Y$, and hence $Sf = f$ implying $f \in \mathcal F$. This shows ``$\supseteq$" and proves $(8)$.

We next prove $(9)$. Let $g=Sf$. We write $R := \sup_K |g|$ and notice $g^{-1}([-r,r]) \subset  B_{R+r}(K)$ for all $r \in \R$ as 
\[
|Sf(x)| \geq \inf_{w \in K} d(x,w) - |f(w)| \geq d(x,K) - R .\] 
As $B_{r+R}(K)$ is finite, this shows that $(Sf)^{-1}(I)$ is finite for all bounded intervals $I$ as desired.

This finishes the proof of the lemma.
\end{proof}
We now give a variational principle to show that $\mathcal F$ contains functions with constant Laplacian on $K$.
This is the key ingredient for the proof of Theorem~\ref{thm:Main}.

\begin{lemma}\label{lem:constLaplace}
Let $G=(V,w,m,d)$ be a salami with connected salami partition $P=(X,Y,K)$.
Then,

\begin{enumerate}[(1)]
\item There exists a minimizer $f \in \mathcal F$ of $\max_K \Delta f$,
\item Every such minimizer $f$ satisfies $\Delta f =const$ on $K$.
\end{enumerate}

\end{lemma}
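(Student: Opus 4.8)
The plan is to treat this as a constrained optimization problem over the finite-dimensional data $f|_K$, since by part~(1) of Lemma~\ref{lem:Sproperties} the map $S$ depends only on $f|_K$, and hence $\mathcal F$ is parametrized by $Lip(1,K)$, a compact convex subset of $\R^K$ (compact after quotienting out the addition of constants, which does not change any Laplacian). For existence of a minimizer, I would first argue that the functional $f \mapsto \max_{v \in K} \Delta f(v)$ is well-defined and continuous as a function of $f|_K \in Lip(1,K)$: the Laplacian $\Delta(Sf)(v)$ for $v \in K$ involves only finitely many neighbors of $v$, whose $Sf$-values depend continuously (in fact, as a finite $\sup$/$\inf$ of affine functions, piecewise-linearly and hence Lipschitz-continuously) on $f|_K$. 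Since $K$ is finite, the maximum over $v \in K$ of these continuous functions is itself continuous. Modding out by constants, we minimize a continuous function over a compact set, so a minimizer exists; this gives part~(1).

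For part~(2), the heart of the argument is to show that at a minimizer the Laplacian cannot take distinct values on $K$, using that $K$ is connected. The strategy is a perturbation/exchange argument: suppose toward a contradiction that $\Delta f$ is not constant on $K$. Let $M := \max_K \Delta f$ and let $A := \{v \in K : \Delta f(v) = M\}$ be the set of maximizers, which is a nonempty proper subset of $K$. The idea is to perturb $f|_K$ so as to strictly decrease the Laplacian on all of $A$ without raising it above $M$ elsewhere, contradicting minimality. Concretely, I would push the values $f(v)$ for $v$ in the ``high-Laplacian'' region up or down by a small $t>0$ in a direction that decreases $\Delta f$ on $A$; because $\Delta f(v) = \frac{1}{m(v)}\sum_{y} w(v,y)(Sf(y)-f(v))$ increasing $f(v)$ itself lowers $\Delta f(v)$, the main difficulty is controlling the induced changes in $Sf$ on the neighbors in $X \cup Y$ and on the boundary vertices of $A$ inside $K$.

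The key obstacle, and where connectedness of $K$ enters, is handling the interface between $A$ and $K \setminus A$: raising $f$ on $A$ may raise $\Delta f$ on the neighboring vertices of $K\setminus A$, but those have Laplacian strictly below $M$, so for small enough $t$ they stay below $M$. The subtle point is the effect on the extension into $X$ and $Y$, governed by the $\sup$ and $\inf$ formulas; here one must check that the first-order change in $Sf(x)$ for $x \in X$ is a convex combination (an average against the optimal transport plan / the attaining $w \in K$ from parts~(4)--(5)) of the perturbations of $f$ on $K$, so that the net effect on $\Delta f(v)$ for $v \in A$ is still strictly negative to first order. I expect the main technical work to lie in making this first-order variation rigorous despite the non-smoothness of $\sup/\inf$ — one should work with one-sided (directional) derivatives and choose the perturbation direction to be $+t$ on $A$ and $0$ elsewhere, noting that the directional derivative of a $\sup$ of affine functions is the $\max$ over active indices, which still yields the desired sign. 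Connectedness of $K$ guarantees $A$ has an edge to $K \setminus A$ (if $A \neq K$), which is exactly what forces a strict net decrease of $\max_K \Delta f$ and delivers the contradiction, completing part~(2).
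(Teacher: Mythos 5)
Your part (1) is essentially the paper's argument (compactness of $\mathcal F$ after fixing a value, continuity of $\max_K\Delta(Sf)$ in $f|_K$) and is fine. Part (2), however, has two genuine gaps. First, \emph{feasibility}: your perturbation direction $t\,1_A$ (equivalently $-t\,1_{K\setminus A}$) need not keep $f|_K$ inside $Lip(1,K)$. If a Lipschitz constraint $f(v)-f(w)=d(v,w)$ with $v\in A$, $w\in K\setminus A$ is active at the minimizer --- and nothing in your argument rules this out --- then $f+t\,1_A\notin Lip(1,K)$ for every $t>0$, $S$ is no longer applicable, and there is no feasible direction along which to take your one-sided derivatives. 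Note the telltale sign: your proof never uses non-negative Ollivier curvature, although the lemma is stated for salamis. In the paper curvature enters exactly at this point: the perturbation direction is $\eps\,\Delta f|_K$, and Lemma~\ref{lem:DiscreteTimeLipContraction} (which needs $\kappa\geq 0$) guarantees $f+\eps\Delta f\in Lip(1,K)$, so that $g:=S(f+\eps\Delta f)$ is a legitimate competitor.

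Second, your claimed strict first-order decrease of $\Delta$ on \emph{all} of $A$ is false. For $v\in A$ whose $K$-neighbors all lie in $A$, and whose neighbors in $X\cup Y$ have their $\sup$/$\inf$ in the definition of $S$ attained at points of $A$ (so that $S(f+t1_A)=Sf+t$ at those neighbors for small $t$), every value in the Laplacian at $v$ shifts by the same $t$ and $\Delta$ is unchanged to first order; only vertices of $A$ seeing the interface (a $K$-neighbor in $K\setminus A$, or an extension anchored outside $A$) strictly decrease. So your perturbation can at best shrink the argmax set while $\max_K\Delta$ remains equal to $M$, and no contradiction with minimality of the \emph{value} results. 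This is precisely why the paper's contradiction is run against a different quantity: among all minimizers it chooses one additionally minimizing $|M_f|$, where $M_f=\{x\in K:\Delta f(x)=\max_K\Delta f\}$; using monotonicity of $S$ (Lemma~\ref{lem:Sproperties}(3)) it gets $g-g(x)\leq f-f(x)$ everywhere for $x\in M_f$, hence $\Delta g(x)\leq\Delta f(x)$ and $M_g\subseteq M_f$, and then connectedness of $K$ supplies one interface edge $x\sim y$, $x\in M_f$, $y\in K\setminus M_f$, where the decrease is strict, so $|M_g|<|M_f|$ --- contradicting the secondary minimality. To repair your sketch you would need both ingredients you are missing: a curvature-justified feasible direction (such as $\eps\Delta f|_K$) in place of $t\,1_A$, and the secondary minimization (equivalently, an induction on $|A|$) in place of the hoped-for strict decrease of $\max_K\Delta f$.
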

We remark that a dual version of the lemma holds with interchanging minima with maxima.

\begin{proof}
We first prove $(1)$. Let $x \in K$.
Let $\mathcal F_0 := \{f \in \mathcal F: f(x)=0\}$ equipped with the distance $$\rho(f,g) := \|f-g\|_\infty = \|(f|_K - g|_K)\|_\infty.$$
As $K$ is finite, we see that $\mathcal F_0$ is compact. As $\max_K \Delta f$ is continuous w.r.t. $\rho$, there must be a minimizer $f \in \mathcal F_0$ of $\max_K \Delta f$. This must also be a minimizer within $\mathcal F$ as $\Delta$ and $\mathcal F$ are invariant under adding constants. This proves $(1)$.

We now prove $(2)$.
Let $f$ be such a minimizer additionally minimizing $$|M_f|:=|\{x \in K:\Delta f(x) = \max_K \Delta f\}|.$$
Clearly, $|M_f|\geq 1$. If $|M_f|=|K|$, then $\Delta f = const$ on $K$ and there is nothing to prove. We finally assume $0<|M_f|<|K|$ and aim to find a contradiction.

We consider $g:=S (f + \eps \Delta f)$. By Lemma~\ref{lem:DiscreteTimeLipContraction}, we see that $ f + \eps \Delta f \in Lip(1,K)$ for small enough $\eps$ by non-negative curvature and finiteness of $K$.
Let $\delta = \max_K \Delta f - \max_{K\setminus M_f} \Delta f$.
If $\eps$ is small enough, then, $|\Delta f - \Delta g| < \delta$ on $K$. This gives  $\Delta g < \max_K \Delta f$ on $K \setminus M_f $.  Note that $f$ is a minimizer, and thus,  $\max_K \Delta f\leq \max_K\Delta g,$ which implies that $M_g \subseteq M_f.$

Let $x \in M_f$. Then, $g -g(x) \leq f-f(x)$ 
on $K$ as $g=f + \eps \Delta f$ on $K$. By monotonicity of $S$, we infer $g-g(x) \leq f-f(x)$ everywhere.
Hence, $\Delta g(x) \leq \Delta f(x)$
implying $\max_K \Delta g \leq \max_K \Delta f$.

As $0<|M_f|<|K|$ and by connectedness, there exist $x\sim y \in K$ with $\Delta f(x) = \max_K \Delta f> \Delta f(y)$. Hence, $$g(y)-g(x) = (f(y)+ \eps\Delta f(y)) - (f(x) + \eps\Delta f(x))<f(y)-f(x).$$ 
Together with $g-g(x) \leq f-f(x)$, this gives
 $\Delta g(x)<\Delta f(x)$ and thus $M_g \neq M_f$.
As $M_g \subseteq M_f$, this shows $|M_g| < |M_f|$ contradicting the minimality of $|M_f|$. This finishes the proof.
\end{proof}

\subsection{Existence of Lipschitz harmonic functions}

Having the variational principle and infinite measure of the ends, we can show that there are functions in $\mathcal F$ which are harmonic on $K$. 
\begin{lemma}{\label{lem:ZeroLaplace}}
Let $G=(V,w,m,d)$ be a salami with connected salami partition $P=(X,Y,K)$.
Let $f \in \mathcal F$ with $\Delta f=c=const$ on $K$. Then, $c=0$.
\end{lemma}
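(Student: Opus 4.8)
The plan is to sandwich $\Delta f$ between the value $c$ on the two ends and then let the infinite volume kill $c$. Concretely, I would first establish the two pointwise bounds
$$\Delta f \ge c \ \text{ on } X, \qquad \Delta f \le c \ \text{ on } Y,$$
and then rule out $c\neq 0$ by summing $\Delta f$ over finite sub-/superlevel sets and comparing with a bounded boundary flux.

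For the two bounds, the idea is to perturb $f$ by a small multiple of its own Laplacian and compare with the extremal extension. Set $g := f + \eps\Delta f$. Non-negative curvature enters exactly here: by the Lipschitz contraction of the heat flow (Lemma~\ref{lem:DiscreteTimeLipContraction}, resp.\ its continuous-time analogue), $g\in Lip(1)$. Since $\Delta f=c$ on $K$, we have $g|_K=f|_K+\eps c$, and because $S$ is equivariant under adding constants, $Sg = Sf+\eps c = f+\eps c$. On $X\cup K$ the extremal extension is the \emph{minimal} $1$-Lipschitz extension of its boundary values (Lemma~\ref{lem:Sproperties}(6)), so $Sg\le g$ there; on $X$ this reads $f+\eps c\le f+\eps\Delta f$, i.e.\ $\Delta f\ge c$. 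Dually, on $Y\cup K$ the extension is maximal (Lemma~\ref{lem:Sproperties}(7)), giving $Sg\ge g$ and hence $\Delta f\le c$ on $Y$.

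For the contradiction, suppose $c>0$. Since $f\to-\infty$ along $X$ and has finite level sets (Lemma~\ref{lem:Sproperties}(9)), the sets $R_t:=\{x\in X:f(x)>-t\}$ are finite and exhaust $X$. Summing $m\,\Delta f$ over $R_t$, all edges internal to $R_t$ cancel, leaving a flux across the $K$-boundary (a bounded quantity, as only finitely many edges join $X$ to the finite set $K$) plus a flux into the deeper region $\{f\le -t\}$; the latter edges run from a larger to a strictly smaller $f$-value, so that contribution is $\le 0$. Thus $\sum_{R_t}m\,\Delta f$ stays bounded above, while $\Delta f\ge c$ forces $\sum_{R_t}m\,\Delta f\ge c\,m(R_t)\to\infty$ as $m(R_t)\uparrow m(X)=\infty$ — a contradiction, so $c\le 0$. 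The symmetric argument on $Y$, using $\Delta f\le c$, the superlevel exhaustion $\{y\in Y:f(y)<s\}$, and $m(Y)=\infty$, gives $c\ge 0$. Hence $c=0$.

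The step I expect to be delicate is making $g=f+\eps\Delta f$ genuinely $1$-Lipschitz on \emph{all} of $V$: the estimate of Lemma~\ref{lem:DiscreteTimeLipContraction} only permits $\eps\le 1/(\Deg(x)+\Deg(y))$ per edge, so a single $\eps$ need not work when the vertex degree is unbounded, and the comparison $Sg\le g$ (via Lemma~\ref{lem:Sproperties}(6)) really does require $g$ to be globally $1$-Lipschitz, not merely $1$-Lipschitz on $K$. I would circumvent this with the continuous-time heat semigroup $P_t$, which contracts $\|\nabla\cdot\|_\infty$ for every $t$ under non-negative Ollivier curvature: apply the comparison to $g_t:=P_tf$ and let $t\to 0^+$, using $P_tf=f+t\Delta f+o(t)$ pointwise and $Sg_t=f+tc+o(t)$ (the error being uniform because $K$ is finite), which yields $c\le\Delta f(x)$ at each fixed $x\in X$ and dually on $Y$. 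Everything else in the volume argument is a finite, sign-controlled sum, so no convergence subtleties arise there.
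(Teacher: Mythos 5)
Your second half is exactly the paper's argument: the paper sums $\Delta f$ against $1_{X(n)}$ for the finite sublevel exhaustion $X(n)=f^{-1}((-n,\infty))\cap X$ (your $R_t$), bounds the flux into the finite set $K$, discards the non-positive flux into $\{f\le -n\}$, and lets $m(X(n))\to\infty$ to get $c\le 0$, with the dual superlevel argument on $Y$ giving $c\ge 0$. The genuine difference is how you obtain the pointwise bounds $\Delta f\ge c$ on $X$ and $\Delta f\le c$ on $Y$. The paper gets these directly from the definition of $\kappa$ together with Lemma~\ref{lem:Sproperties}: by part (8), every $x\in X$ lies on a gradient path $x=x_0\sim\cdots\sim x_{n+1}\in K$ with $f(x_{i+1})-f(x_i)=d(x_i,x_{i+1})$, and for each such sharp pair the very definition of Ollivier curvature gives $\Delta f(x_i)\ge\Delta f(x_{i+1})$, hence $\Delta f(x)\ge c$ by chaining down to $K$; dually on $Y$. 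You instead perturb, $g=f+\eps\Delta f$, and compare with the extremal extension via monotonicity and parts (6)--(7) --- the same device the paper uses in Lemma~\ref{lem:constLaplace}, transplanted from the finite set $K$ to the infinite ends, which is where your uniform-$\eps$ problem appears.

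That problem is real, but your semigroup repair is heavier than necessary and not free of gaps itself: for the \emph{unbounded} Lipschitz function $f$ (it tends to $\pm\infty$ along the ends) you would still need to justify that $P_tf$ is well defined (non-explosion), that the Lipschitz contraction extends beyond bounded functions, and that $P_tf=f+t\Delta f+o(t)$ holds pointwise --- none of which the paper develops, and all of which are delicate precisely in the unbounded-geometry regime where a uniform $\eps$ fails. The cheaper observation is that no \emph{global} Lipschitz bound on $g$ is needed: at a fixed $x\in X$, the inequality $Sg(x)\le g(x)$ unwinds to $g(w)-g(x)\le d(x,w)$ for the finitely many $w\in K$ only, and Lemma~\ref{lem:DiscreteTimeLipContraction} is a per-pair statement. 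So take $\eps_x:=\bigl(\Deg(x)+\max_{w\in K}\Deg(w)\bigr)^{-1}$; then for each $w\in K$, non-negative curvature gives $g(w)-g(x)\le d(x,w)\bigl(1-\eps_x\kappa(x,w)\bigr)\le d(x,w)$, whence $f(x)+\eps_x\Delta f(x)=g(x)\ge \sup_{w\in K}\bigl(f(w)+\eps_x c-d(x,w)\bigr)=f(x)+\eps_x c$, and $\eps_x$ cancels to yield $\Delta f(x)\ge c$; dually on $Y$. (If one reads the salami hypothesis as non-negative curvature on edges only, the paper's gradient-path chaining supplies the needed $\kappa(x,w)\ge 0$ along sharp pairs.) With that one-line patch in place of the semigroup, your proof is complete and correct.
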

\begin{proof}

By non-negative Ollivier curvature and by Lemma~\ref{lem:Sproperties}, we have $\Delta f \geq c$ on $X$ and $\Delta f \leq c$ on $Y$.
We write $R:=\sup_K |f|$.
Let $X(n) := f^{-1}((-n, \infty)) \cap X  \subseteq f^{-1}((-n,R])$. 
We notice $|X(n)| < \infty$ by Lemma~\ref{lem:Sproperties}(9).
As $m(X)=\infty$, we have
 $m(X(n)) \stackrel{n \to \infty}{\longrightarrow} \infty$.
 We assume $n > R$.
Now, we estimate
\[
c \cdot m(X(n)) \leq \langle \Delta f, 1_{X(n)} \rangle
=\left(\sum_{{\substack{y \in K\\ x \in X(n)}}}  + \sum_{{\substack{f(y)\leq  -n\\ x \in X(n)}}} \right) (f(y)-f(x))w(x,y)
\leq \sum_{{\substack{y \in K\\ x \in X }}} (f(y)-f(x))w(x,y)<\infty
\]
where the first inequality follows from $\Delta f \geq c$ on $X$, and the second inequality follows from non-positivity  of the second sum,
and finiteness follows as $K$ is finite.
Since $m(X(n))$ goes to infinity for large $n$, we can conclude $c \leq 0$.
Analogous considerations for $Y(n):=f^{-1}((-\infty,n)) \cap Y$ show $c \geq 0$ implying $c=0$.
\end{proof}

Combining the two lemmas shows that the Laplacian of functions in $\mathcal F$ cannot be strictly positive on $K$, as stated in the next lemma.

\begin{lemma}{\label{lem:NonPositiveLaplace}}
Let $G=(V,w,m,d)$ be a salami with connected salami partition $P=(X,Y,K)$.
Then,
\begin{enumerate}[(1)]
\item There is $f \in \mathcal F$ with $\Delta f = 0$ on $K$.
\item If $f \in \mathcal F$ with $\Delta f \leq 0$ on $K$. Then, $\Delta f=0$ on $K$. 
\item If $f \in \mathcal F$ with $\Delta f \geq 0$ on $K$. Then, $\Delta f=0$ on $K$. 
\end{enumerate}

\end{lemma}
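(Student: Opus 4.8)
The plan is to obtain all three assertions by assembling the variational principle of Lemma~\ref{lem:constLaplace} with the vanishing-constant result of Lemma~\ref{lem:ZeroLaplace}; no new analytic input is needed, so the work is purely in the logical bookkeeping. First I would prove (1): apply Lemma~\ref{lem:constLaplace}(1) to get a minimizer $f \in \mathcal F$ of $\max_K \Delta f$; by Lemma~\ref{lem:constLaplace}(2) this $f$ satisfies $\Delta f \equiv c$ on $K$ for some constant $c$, and Lemma~\ref{lem:ZeroLaplace} forces $c=0$. Hence $\Delta f = 0$ on $K$, which is exactly (1).

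The key observation I would extract from (1) is that the extremal value of the functional $f \mapsto \max_K \Delta f$ over $\mathcal F$ equals $0$: indeed, every minimizer is constant on $K$ by Lemma~\ref{lem:constLaplace}(2) and that constant vanishes by Lemma~\ref{lem:ZeroLaplace}, so $\max_K \Delta f = 0$ at any minimizer, and therefore $\max_K \Delta g \geq 0$ for every $g \in \mathcal F$. Dually, using the dual version of Lemma~\ref{lem:constLaplace} noted in the remark after it, the maximal value of $f \mapsto \min_K \Delta f$ over $\mathcal F$ equals $0$, so $\min_K \Delta g \leq 0$ for every $g \in \mathcal F$.

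With these bounds in hand, (2) and (3) are immediate. For (2), suppose $f \in \mathcal F$ with $\Delta f \leq 0$ on $K$, so that $\max_K \Delta f \leq 0$; combined with $\max_K \Delta f \geq 0$ this yields $\max_K \Delta f = 0$. Thus $f$ attains the minimum of $\max_K \Delta$ over $\mathcal F$, i.e. $f$ is itself a minimizer, so Lemma~\ref{lem:constLaplace}(2) makes $\Delta f$ constant on $K$, and that constant equals $\max_K \Delta f = 0$; hence $\Delta f = 0$ on $K$. Part (3) is the mirror image: for $f \in \mathcal F$ with $\Delta f \geq 0$ on $K$ we get $\min_K \Delta f = 0$, so $f$ maximizes $\min_K \Delta$, and the dual of Lemma~\ref{lem:constLaplace}(2) again gives $\Delta f \equiv 0$ on $K$.

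The only point requiring care is the middle step, namely pinning down that both extremal values of the Laplacian functionals are exactly $0$ and citing the dual form of Lemma~\ref{lem:constLaplace} for the maximizing side; beyond this there is no genuine obstacle, since the substantive content lives entirely in the two preceding lemmas.
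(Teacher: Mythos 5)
Your proposal is correct and follows essentially the same route as the paper: part (1) by combining Lemma~\ref{lem:constLaplace} with Lemma~\ref{lem:ZeroLaplace}, and parts (2) and (3) by observing that the extremal value of the functional is $0$, so that $f$ is itself a minimizer (resp.\ maximizer, via the dual version of Lemma~\ref{lem:constLaplace}) and hence has constant, therefore vanishing, Laplacian on $K$. The paper's proof of (2) is exactly your argument with $c := \min_{h\in\mathcal F} \max_K \Delta h$, and it likewise disposes of (3) by the dual statement.
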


\begin{proof}
We first prove $(1)$. By Lemma~\ref{lem:constLaplace}, there exists $f \in \mathcal F$ with $\Delta f = c = const$ on $K$, and by Lemma~\ref{lem:ZeroLaplace}, we have $c=0$ proving $(1)$.

We next prove $(2)$.
Let $c := \min_{h\in\mathcal F} \max_K \Delta h$. By Lemma~\ref{lem:constLaplace}, there is $g \in \mathcal F$ with $\Delta g = c$. By Lemma~\ref{lem:ZeroLaplace}, we infer $c=0$. As $\Delta f \leq 0$ on $K$, we have $0 \geq \max_K \Delta f \geq c=0$.
Hence, $f$ is a minimizer of $\max_K \Delta h$ implying $\Delta f = c=0$ on $K$ by Lemma~\ref{lem:constLaplace}(2).
This proves $(2)$.

The proof of $(3)$ is similar. This finishes the proof.
\end{proof}

\subsection{Varying the salami partition}

In this section, we investigate how changing the connected salami partition affects the function spaces $\mathcal F$ and $\mathcal H$.

\begin{definition}
If $P=(X,Y,K)$ is a connected salami partition, and $\widetilde K\supseteq K$ finite and connected, we write $P(\widetilde K) := (X\setminus \widetilde K,Y \setminus \widetilde K,\widetilde K)$.
\end{definition}

We now show that $\mathcal F$ increases when $K$ increases.
\begin{lemma}\label{lem:superpartition}
Let $G=(V,w,m,d)$ be a graph with a salami partition $P=(X,Y,K)$. Let $\widetilde K\supseteq K$ be connected and finite. Then,
\begin{enumerate}[(1)]
\item $P(\widetilde K)$ is a connected salami partition
\item $\mathcal F(P) \subseteq \mathcal F(P(\widetilde K)).$
\end{enumerate}

\end{lemma}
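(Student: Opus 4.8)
The plan is to treat the two parts separately: part (1) is a direct check of the axioms in Definition~\ref{def:salami}, while part (2) is best reduced to the intrinsic description of $\mathcal F$ furnished by Lemma~\ref{lem:Sproperties}(8), rather than working with the operator $S$ directly.

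For part (1), I would verify axioms (a)--(d) together with connectedness for the triple $(X\setminus\widetilde K,\,Y\setminus\widetilde K,\,\widetilde K)$. Disjointness and covering in (a) follow from $K\subseteq\widetilde K$ and $X\dot\cup Y\dot\cup K=V$: any vertex outside $\widetilde K$ lies outside $K$, hence in $X$ or in $Y$, so it lands in $X\setminus\widetilde K$ or $Y\setminus\widetilde K$, and the three pieces are pairwise disjoint by construction. Axiom (b) is the hypothesis that $\widetilde K$ is finite, and (c) is immediate from $E(X\setminus\widetilde K,Y\setminus\widetilde K)\subseteq E(X,Y)=\emptyset$. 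For (d), write $X=(X\cap\widetilde K)\,\dot\cup\,(X\setminus\widetilde K)$; since $m(X)=\infty$ and $m(X\cap\widetilde K)\leq m(\widetilde K)<\infty$, additivity forces $m(X\setminus\widetilde K)=\infty$, and symmetrically for $Y$. Connectedness of the new cut is exactly the hypothesis that $\widetilde K$ is connected, so $P(\widetilde K)$ is a connected salami partition.

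For part (2), the key observation is that Lemma~\ref{lem:Sproperties}(8) characterizes membership in $\mathcal F(P)$ purely in terms of $h$ and the partition, namely $h\in Lip(1)$, $(\nabla_+h)|_X=1$, $(\nabla_-h)|_Y=1$, $\lim_X h=-\infty$ and $\lim_Y h=\infty$, and that the gradients $\nabla_\pm h$ are defined through the neighbours of a vertex and so are independent of the partition. Given $h\in\mathcal F(P)$, I would check the five analogous conditions for $P(\widetilde K)$. The Lipschitz condition is unchanged. Because $X\setminus\widetilde K\subseteq X$ and $Y\setminus\widetilde K\subseteq Y$, the identities $(\nabla_+h)|_{X\setminus\widetilde K}=1$ and $(\nabla_-h)|_{Y\setminus\widetilde K}=1$ are just restrictions of the statements over $X$ and $Y$. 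Finally, since $\widetilde K$ is finite, $X\setminus\widetilde K$ and $Y\setminus\widetilde K$ differ from $X$ and $Y$ by finitely many vertices, so the behaviour of $h$ at infinity along these sets is unchanged, giving $\lim_{X\setminus\widetilde K}h=-\infty$ and $\lim_{Y\setminus\widetilde K}h=\infty$. Applying Lemma~\ref{lem:Sproperties}(8) now in the reverse direction for $P(\widetilde K)$ yields $h\in\mathcal F(P(\widetilde K))$.

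I do not expect a genuine obstacle here; the inclusion is essentially a monotonicity-under-enlarging-$K$ statement. The only point needing a word of care is that the conditions defining $\mathcal F$ are insensitive to deleting the finite set $\widetilde K\setminus K$ from the two ends: the gradient conditions because they are local and pass to subsets, and the limit conditions because they concern only the cofinite behaviour of $h$ on each end. A purely operator-theoretic argument working directly with $S(P(\widetilde K))(h|_{\widetilde K})$ is also available, but it would require re-identifying $h$ as the extremal Lipschitz extension over $\widetilde K$ via Lemma~\ref{lem:Sproperties}(6)--(7); the characterization in (8) packages exactly this, so I would prefer the route above.
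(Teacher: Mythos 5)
Your proof is correct and takes the same route as the paper: the paper likewise treats part (1) as a routine verification of the axioms and deduces part (2) directly from the intrinsic characterization of $\mathcal F$ in Lemma~\ref{lem:Sproperties}(8). You have merely spelled out the details (restriction of the gradient conditions to $X\setminus\widetilde K$ and $Y\setminus\widetilde K$, and stability of the limit conditions) that the paper leaves implicit as ``easy.''
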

\begin{proof}
The proof of $(1)$ is easy.
Assertion $(2)$ is an easy consequence of Lemma~\ref{lem:Sproperties}(8). This finishes the proof.
\end{proof}

Now, we increase $K$ to show that $\Delta f= 0$ everywhere, assuming only that $f \in \mathcal F$ and $\Delta f=0$ on $K$.

\begin{lemma}\label{lem:harmonicEverywhere}
Let $G=(V,w,m,d)$ be a salami with connected salami partition $P=(X,Y,K)$.
Suppose $\Delta f=0$ on $K$ for some $f \in \mathcal F$. Then, $\Delta f = 0$ everywhere.
In particular, $\mathcal H(P) \neq \emptyset.$
\end{lemma}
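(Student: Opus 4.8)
The plan is to prove harmonicity separately on $K$, on $X$, and on $Y$, handling the two ends by enlarging the separating set $K$ \emph{asymmetrically}, into one end at a time, so that the Laplacian keeps a definite sign on the enlarged core. On $K$ we are given $\Delta f=0$. As a preliminary step I would record the sign of $\Delta f$ on the two ends: exactly as in the proof of Lemma~\ref{lem:ZeroLaplace}, non-negative Ollivier curvature together with the constant-gradient description of $\mathcal F$ in Lemma~\ref{lem:Sproperties}(4),(5),(8) yields $\Delta f\ge 0$ on $X$ and $\Delta f\le 0$ on $Y$. Concretely, for $x\in X$ Lemma~\ref{lem:Sproperties}(4) supplies a geodesic from $x$ to some $v\in K$ along which $f$ increases at unit rate; applying the curvature inequality $\kappa\ge 0$ edge by edge along this geodesic (each edge needs only $f\in Lip(1)$ and the unit increment) gives $\Delta f(x)\ge\Delta f(v)=0$, and symmetrically on $Y$.

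Next I would fix a vertex $v\in X$ and build a finite connected set $\tilde K\supseteq K$ with $v\in\tilde K$ and $\tilde K\setminus K\subseteq X$. Such a set exists because any path from $v$ to $K$ must stay inside $X$ until it first meets $K$ (there are no edges between $X$ and $Y$), so adjoining the $X$-part of such a path to $K$ keeps the set connected and finite. By Lemma~\ref{lem:superpartition}, $P(\tilde K)$ is again a connected salami partition and $f\in\mathcal F(P)\subseteq\mathcal F(P(\tilde K))$. The crucial point is the sign: on $\tilde K=K\cup(\tilde K\setminus K)$ we have $\Delta f=0$ on $K$ and $\Delta f\ge 0$ on $\tilde K\setminus K\subseteq X$, hence $\Delta f\ge 0$ on all of $\tilde K$. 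Now Lemma~\ref{lem:NonPositiveLaplace}(3), applied to the partition $P(\tilde K)$, forces $\Delta f=0$ on $\tilde K$, and in particular $\Delta f(v)=0$. Since $v\in X$ was arbitrary, $\Delta f=0$ on $X$; the mirror argument, enlarging $K$ into $Y$ and invoking Lemma~\ref{lem:NonPositiveLaplace}(2), gives $\Delta f=0$ on $Y$. Together with $\Delta f=0$ on $K$ this proves $\Delta f=0$ everywhere. The final assertion $\mathcal H(P)\neq\emptyset$ then follows by feeding the function produced in Lemma~\ref{lem:NonPositiveLaplace}(1) into what we have just proved.

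The step I expect to be the real content is recognizing that $K$ must be enlarged into a single end rather than symmetrically: a symmetric enlargement would leave $\Delta f$ with both signs on the new core (non-negative on the $X$-side, non-positive on the $Y$-side), so that neither part (2) nor part (3) of Lemma~\ref{lem:NonPositiveLaplace} would apply. Restricting the enlargement to $X$ (respectively $Y$) keeps the Laplacian non-negative (respectively non-positive) on $\tilde K$, which is precisely the hypothesis those two statements require. The remaining ingredients—the existence of the one-sided connected enlargement and the sign computation on the ends—are routine given the earlier lemmas.
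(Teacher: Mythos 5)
Your proposal is correct and follows essentially the same route as the paper's proof: enlarge $K$ one-sidedly into $X$ (resp.\ $Y$) to a finite connected $\widetilde K$ containing the target vertex, note $\Delta f \geq 0$ (resp.\ $\leq 0$) on $\widetilde K$ via non-negative curvature and Lemma~\ref{lem:Sproperties}, and apply Lemma~\ref{lem:superpartition} together with Lemma~\ref{lem:NonPositiveLaplace} to the enlarged partition $P(\widetilde K)$, concluding $\mathcal H(P)\neq\emptyset$ from Lemma~\ref{lem:NonPositiveLaplace}(1). The only difference is presentational: you spell out the sign computation on the ends (the geodesic/curvature chain), which the paper states in one line and defers to the argument already used in Lemma~\ref{lem:ZeroLaplace}.
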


\begin{proof}
First let $x \in X$. We aim to show $\Delta f(x)=0$. We can choose $\widetilde K$ finite, connected such that $K \cup \{x\}\subset \widetilde K \subset K \cup X$.
By non-negative curvature, we have $\Delta f \geq 0$ on $X\cup K$ and thus on $\widetilde K$.
By Lemma~\ref{lem:superpartition}, we have $f \in \mathcal F(P(\widetilde K))$ and
 $ P(\widetilde K)$ is a connected salami partition.
Now, Lemma~\ref{lem:NonPositiveLaplace} shows $\Delta f = 0$ on $\widetilde K$ implying $\Delta f(x)=0$. Similarly, one can show $\Delta f=0$ on $Y$. By Lemma~\ref{lem:NonPositiveLaplace}, we have $\mathcal H(P) \neq \emptyset$. This finishes the proof.
\end{proof}

We can now deduce recurrence of salamis.
We recall a graph $G=(V,w,m)$ is called \emph{recurrent} if for all $\eps>0$, there exists a function $f \in C_c(V)$ with $\|f\|_\infty = 1$ and $-\langle f,\Delta f \rangle <\eps$.
We first give a recurrence criterion for general graphs.

\begin{lemma}\label{lem:recurrence}
Let $G=(V,w,m,d)$ be a locally finite connected graph. Suppose there exists a harmonic function $f \in Lip(1)$ with $|f^{-1}(I)|< \infty$ for all bounded intervals $I$. 
Moreover, assume 
\[|E(f^{-1}((-\infty,r)),f^{-1}([r,\infty))) |< \infty \mbox{ for all }r \in \R.\]
Then, $G$ is recurrent.
\end{lemma}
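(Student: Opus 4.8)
The plan is to exhibit, for every $\eps>0$, an explicit cutoff function $f_\eps \in C_c(V)$ built from the harmonic Lipschitz function $f$, so that $\|f_\eps\|_\infty = 1$ and $-\langle f_\eps, \Delta f_\eps\rangle < \eps$. The natural choice is a truncated and rescaled version of $f$: fix a large parameter $N$ and set $f_\eps := \phi_N \circ f$, where $\phi_N:\R \to [0,1]$ is a piecewise-linear ``tent'' or plateau profile supported on $[-N,N]$, equal to $1$ on a middle band and decaying linearly to $0$ near $\pm N$. Since $f^{-1}(I)$ is finite for bounded $I$, the function $f_\eps$ has finite support and lies in $C_c(V)$, and by construction $\|f_\eps\|_\infty = 1$. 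The point of rescaling the slope over a window of width growing with $N$ is that the Dirichlet energy will be spread thinly, making $-\langle f_\eps,\Delta f_\eps\rangle$ small.

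The key computation is to rewrite the Dirichlet energy in its symmetric form,
\[
-\langle f_\eps, \Delta f_\eps \rangle = \tfrac 12 \sum_{x,y} w(x,y)\,(f_\eps(y)-f_\eps(x))^2,
\]
and estimate it edge by edge. Because $f \in Lip(1)$ and $\phi_N$ is Lipschitz with small slope, each difference $f_\eps(y)-f_\eps(x)$ is controlled by the slope of $\phi_N$ times $(f(y)-f(x))$. The crucial structural input is the hypothesis that the edge boundary $E(f^{-1}((-\infty,r)),f^{-1}([r,\infty)))$ is finite for every $r$: this bounds the total weight of edges crossing each level $r$ uniformly, so the sum over edges in the transition region $\{|f| \in [N_0,N]\}$ is dominated by (slope)$^2$ times a coarea-type sum of these finite boundary weights over the relevant levels. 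I would design $\phi_N$ to have slope of order $1/(\log N)$ over an exponentially wide band, or more simply choose the profile so that its total squared-slope contribution, weighted by the uniformly bounded level boundaries, tends to $0$.

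Concretely, I would let $B := \sup_r |E(f^{-1}((-\infty,r)),f^{-1}([r,\infty)))|$ measure the worst-case level boundary, observe that for each edge crossing from level $a$ to level $b$ the contribution is at most $(\phi_N(b)-\phi_N(a))^2 \, w(x,y)$, and sum. Choosing a logarithmic cutoff profile $\phi_N$ that descends from $1$ to $0$ across the dyadic band $[N, N^2]$ (so that its slope near level $r$ is $\sim 1/(r \log N)$) makes the energy comparable to $B \sum_{\text{levels}} (\text{slope})^2 \sim B / \log N$, which vanishes as $N \to \infty$. This is the standard resistance-to-infinity argument adapted to the function $f$ in place of the distance, and it yields recurrence.

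The main obstacle I anticipate is the transition from the pointwise Lipschitz/harmonicity data on vertices to a clean coarea-style summation over levels of $f$: the values of $f$ need not be integers or evenly spaced (indeed the paper emphasizes in Example~\ref{ex:noIntegerH0} that they need not be integer-valued), so I cannot simply index edges by integer levels and must instead group edges by the real intervals their endpoints straddle. Making the bookkeeping rigorous — ensuring the finiteness hypotheses guarantee that only finitely many edges contribute in each band and that the weighted slope-squared sum genuinely telescopes against the bounded level boundaries — is where the real care is needed, whereas the choice of the logarithmic profile and the final limit $\log N \to \infty$ are routine once that accounting is in place.
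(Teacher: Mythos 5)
There is a genuine gap, and it is the step you yourself flag as ``routine.'' Your energy estimate rests on the claim that the hypothesis ``bounds the total weight of edges crossing each level $r$ uniformly,'' encoded in your quantity $B := \sup_r |E(f^{-1}((-\infty,r)),f^{-1}([r,\infty)))|$. Neither half of this is available: the lemma only asserts finiteness of the \emph{cardinality} of the crossing set for each \emph{fixed} $r$, with no uniformity in $r$, and cardinality controls nothing about weight, since $w$ is not assumed bounded (the paper even exhibits salamis with unbounded edge weights, Example~\ref{ex:noGlobalEdgeWeightBound}). So $B$ may be infinite and the bound ``energy $\lesssim B/\log N$'' collapses. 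The deeper problem is that your edge-by-edge estimate uses only $f \in Lip(1)$ and crossing-finiteness; the harmonicity of $f$ never enters the computation. No such argument can succeed: take $V=\Z$, $m\equiv 1$, $w(n,n+1)=2^{|n|}$, $f(n)=n$. Then $f \in Lip(1)$ for $d_0$, $f$ has finite level sets, and exactly one edge crosses each level, yet the graph is transient (the resistance to infinity is $\sum_n 2^{-n}<\infty$), so every profile $\phi_N$ has energy bounded below by a positive capacity. The only failed hypothesis there is $\Delta f = 0$, so any correct proof must use it quantitatively, which yours does not.

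For contrast, the paper's proof takes the slope-one tent $g := (R-|f|)_+$ and does not spread the gradient at all. Harmonicity is what makes this work: off the set $M$ of vertices where $f$ changes sign across an edge, $g$ is superharmonic (locally $g \geq R \mp f$ with equality at the center, and $R\mp f$ is harmonic), and $M$ is finite using the crossing hypothesis only at the single level $r=0$ together with finiteness of $f^{-1}(\{0\})$. Hence, with $\Deg_d(x):=\Delta d(x,\cdot)(x)$, one gets $-\langle g,\Delta g\rangle \leq \langle g 1_M, -\Delta g\rangle \leq R\,\langle 1_M,\Deg_d\rangle = CR$ with $C$ independent of $R$, while $\|g\|_\infty = R$, so the quotient $-\langle g,\Delta g\rangle/\|g\|_\infty^2 \leq C/R \to 0$. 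In other words, the Dirichlet energy localizes at the sign change rather than being spread thinly over a band, which is why no uniform-in-$r$ weight bound is needed. To repair your argument you would have to inject harmonicity into the estimate on the transition band — at which point you are effectively reconstructing this localization — since the pure resistance-to-infinity computation is simply not implied by the stated hypotheses.
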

\begin{proof}
W.l.o.g., we assume that $f^{-1}(\{0\})\neq\emptyset.$
Let $R>0$.
We consider $g := (R - |f|)_+$. We notice $g \in C_c(V)$ as $|f^{-1}([-R,R])|< \infty$.
Set 
\[
M:=\{x \in V: \sgn f(x) \neq \sgn f(y) \mbox{ for some } y \sim x\}.
\] Note that $\Delta g \geq 0$ on $V\setminus M$ as $\Delta f =0$.
Moreover, $M$ is finite as $E(f^{-1}((-\infty,r)),f^{-1}([r,\infty))) < \infty$ and as $f^{-1}(\{0\})$ is finite.
Hence with $\Deg_d(x) := \Delta d(x,\cdot)(x)$,
\[
-\langle g, \Delta g \rangle \leq \langle g 1_M, -\Delta g \rangle \leq  \langle g 1_M, \Deg_d \rangle \leq R\langle 1_M,\Deg_d \rangle =CR < \infty  
\]
where the second inequality follows as $g \in Lip(1)$, and finiteness follows as $M$ is finite.
Moreover, $\|g\|_\infty =R$ as $f^{-1}(\{0\}) \neq \emptyset$.
Thus,
\[
-\frac{\langle g,\Delta g\rangle}{\|g\|_\infty^2} \leq \frac{CR}{R^2} \stackrel{R\to \infty}{\longrightarrow} 0.
\]
Normalizing $g$ implies recurrence. This finishes the proof.
\end{proof}

Using the lemma, we now show recurrence of salamis.

\begin{theorem}\label{thm:recurrent}
Every salami is recurrent.
\end{theorem}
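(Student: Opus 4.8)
The plan is to reduce the theorem to the recurrence criterion of Lemma~\ref{lem:recurrence} by producing a single harmonic Lipschitz function with the finiteness properties that criterion demands. First I would use Lemma~\ref{lem:connectedSalami} to fix a connected salami partition $P=(X,Y,K)$, and then invoke Lemma~\ref{lem:harmonicEverywhere} to conclude $\mathcal H(P)\neq\emptyset$; I pick $f\in\mathcal H(P)$, so that $f\in\mathcal F(P)$ with $\Delta f=0$ everywhere. This $f$ is the candidate to feed into Lemma~\ref{lem:recurrence}.

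It then remains to verify the hypotheses of Lemma~\ref{lem:recurrence}. That $f\in Lip(1)$ is immediate from Lemma~\ref{lem:Sproperties}(2), harmonicity holds by the choice of $f$, and the finiteness $|f^{-1}(I)|<\infty$ for every bounded interval $I$ is exactly Lemma~\ref{lem:Sproperties}(9). The only genuinely non-trivial condition left is finiteness of the edge boundary $E(f^{-1}((-\infty,r)),f^{-1}([r,\infty)))$ for each $r\in\R$, and this is where the geometric input must enter.

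To treat the edge-boundary condition I would split the crossing edges according to the partition, exploiting that $E(X,Y)=\emptyset$ and that $K$ is finite. Every edge lies inside $X$, inside $Y$, inside $K$, or joins $K$ to $X\cup Y$; since $K$ is finite and $G$ is locally finite, only finitely many crossing edges can meet $K$. For a crossing edge inside $X$, its endpoint on the $[r,\infty)$ side lies in $\{x\in X:f(x)\geq r\}$, which is finite because $\lim_X f=-\infty$ by Lemma~\ref{lem:Sproperties}(8); local finiteness then bounds the number of edges emanating from this finite vertex set. Symmetrically, a crossing edge inside $Y$ has its $(-\infty,r)$ endpoint in the finite set $\{y\in Y:f(y)<r\}$, finite since $\lim_Y f=+\infty$. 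Summing the three contributions gives finiteness of the edge boundary.

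With all hypotheses confirmed, Lemma~\ref{lem:recurrence} yields recurrence of $G$, completing the proof. I expect the edge-boundary estimate to be the main obstacle: it is the one step where the coarse behavior of $f$ at the two ends (encoded by $\lim_X f=-\infty$ and $\lim_Y f=+\infty$) has to be combined with local finiteness, whereas the remaining hypotheses are read off directly from the structural properties of $\mathcal F$ established earlier.
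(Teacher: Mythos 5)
Your proposal is correct and follows essentially the same route as the paper: take $f\in\mathcal H(P)$ via Lemma~\ref{lem:harmonicEverywhere}, check the hypotheses of Lemma~\ref{lem:recurrence} using Lemma~\ref{lem:Sproperties}, with the edge-boundary finiteness as the only substantive step. Your partition-based case split ($X$-internal, $Y$-internal, edges meeting $K$) is just a cosmetic repackaging of the paper's observation that every crossing edge must contain a vertex of the finite set $f^{-1}([-R,R])$, since in both arguments the point is that $f$ is bounded above on $X$ and below on $Y$.
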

\begin{proof}
Let $G=(V,w,m,d)$ be a salami.
Let $f \in \mathcal H(P)$ for some connected salami partition $P$.
 Lemma~\ref{lem:Sproperties}  shows that 
$f$ has finite level sets. 
Let $r \in \R$ and
let $A:=f^{-1}((-\infty,r))$ and $B:=f^{-1}([r,\infty))$.
For applying Lemma~\ref{lem:recurrence}, 
we have to show that $E(A,B) < \infty$.

Let $R>0$ such that $-R \leq f|_K \leq R$ and such that $R>|r|$. Then $f|_X \leq R$ and $f|_Y \geq - R$. Hence, there are no edges from $f^{-1}((-\infty,-R)) \subseteq X$ to $f^{-1}((R,\infty)) \subseteq Y$.
As $R>|r|$, every edge from $(x,y) \in E(A,B)$ must contain a vertex of $f^{-1}([-R,R])$ which is finite. Thus, $ E(A,B)$ is finite so we can apply Lemma~\ref{lem:recurrence}.
Hence, $G$ is recurrent. This finishes the proof.
\end{proof}

We next show that $\mathcal H(P)$ has only one element up to adding constants.

\begin{lemma}\label{lem:Honedimensional}
Let $G=(V,w,m,d)$ be a salami with connected salami partition $P=(X,Y,K)$.
Then,
\[
\mathcal H(P) = h+ \R \mbox{ for some } h \in \mathcal H(P).
\]
\end{lemma}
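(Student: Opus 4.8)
The plan is to prove the two inclusions of $\mathcal H(P) = h + \R$ separately. The inclusion $h + \R \subseteq \mathcal H(P)$ is immediate: if $h \in \mathcal H(P)$ and $c \in \R$, then $\Delta(h+c) = \Delta h = 0$, and $\mathcal F(P)$ is invariant under adding constants (as already exploited in Lemma~\ref{lem:constLaplace}), so $h + c \in \mathcal H(P)$. The real content is the reverse inclusion, which amounts to showing that any two elements $f, g \in \mathcal H(P)$ differ by a constant. Since $f - g$ is harmonic, my strategy is to first show that $f - g$ is \emph{bounded}, and then to conclude that a bounded harmonic function on a salami must be constant.

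The boundedness of $f - g$ should follow purely from the extremal structure of $\mathcal F$ recorded in Lemma~\ref{lem:Sproperties}. Set $M := \max_{v \in K}(f(v) - g(v))$ and $m := \min_{v\in K}(f(v)-g(v))$, both finite since $K$ is finite; on $K$ we trivially have $m \le f - g \le M$. For $x \in X$, Lemma~\ref{lem:Sproperties}(4) applied to $f$ yields $v \in K$ with $f(x) = f(v) - d(v,x)$, while $g \in Lip(1)$ gives $g(x) \ge g(v) - d(v,x)$; subtracting yields $f(x) - g(x) \le f(v) - g(v) \le M$. Applying (4) to $g$ instead produces the matching lower bound $f(x) - g(x) \ge m$. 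For $y \in Y$ one argues symmetrically via Lemma~\ref{lem:Sproperties}(5): choosing $v \in K$ with $f(y) = f(v) + d(v,y)$ and using $g(y) \le g(v) + d(v,y)$ gives $f(y) - g(y) \ge m$, and swapping the roles of $f$ and $g$ gives $f(y)-g(y) \le M$. Hence $m \le f - g \le M$ on all of $V$, so $f - g$ is bounded.

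Finally, $f - g$ is a bounded harmonic function on $G$. Since $G$ is recurrent by Theorem~\ref{thm:recurrent}, and bounded harmonic functions on a recurrent graph are constant (equivalently, one may invoke the bounded Liouville property for non-negatively curved graphs), we conclude $f - g \equiv c$ for some constant $c$. Thus every $g \in \mathcal H(P)$ lies in $f + \R$, giving $\mathcal H(P) \subseteq f + \R$ which, combined with the first inclusion, yields the claimed equality.

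I expect the main obstacle to be the boundedness step, specifically the bookkeeping of which extremal identity, (4) or (5), to pair with the Lipschitz estimate for $f$ versus for $g$ in order to obtain matched upper and lower bounds on both ends $X$ and $Y$; once boundedness is in hand the conclusion is a direct appeal to recurrence. A secondary point worth flagging is that the implication ``recurrent $\Rightarrow$ bounded harmonic functions are constant'' must be available; if one prefers to bypass it, the bounded Liouville property under non-negative Ollivier curvature can be cited instead, and it likewise requires only the boundedness of $f-g$.
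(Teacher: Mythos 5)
Your proof is correct, but it takes a genuinely different route from the paper's. Your boundedness step is sound: for $f,g\in\mathcal F(P)$, pairing Lemma~\ref{lem:Sproperties}(4) (resp.\ (5)) for one function with the $Lip(1)$ bound (Lemma~\ref{lem:Sproperties}(2)) for the other yields $\min_K(f-g)\le f-g\le \max_K(f-g)$ on all of $V$; note this uses no harmonicity, so in fact any two elements of $\mathcal F(P)$ differ by a bounded function. You then dispose of the bounded harmonic difference by recurrence (Theorem~\ref{thm:recurrent} precedes this lemma in the paper and does not use it, so there is no circularity) or by the bounded Liouville property of \cite{jost2019Liouville}. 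The paper argues instead entirely locally and self-containedly: after adding a constant, $h_1\le h_2$ on $K$ with $h_1(x)=h_2(x)$ for some $x\in K$; monotonicity of $S$ (Lemma~\ref{lem:Sproperties}(3)) propagates $h_1\le h_2$ to all of $V$; then along an edge $v\sim w$ in the connected set $K$ with $h_1(v)=h_2(v)$ and $h_1(w)<h_2(w)$ one gets $\Delta h_1(v)<\Delta h_2(v)$, contradicting $\Delta h_1=\Delta h_2=0$; hence $h_1=h_2$ on $K$, and therefore $h_1=Sh_1=Sh_2=h_2$. The trade-off: the paper's proof is elementary, imports nothing external, but uses connectedness of $K$ in an essential way; yours does not use connectedness of $K$ at all and produces the useful intermediate fact about bounded differences, but leans on heavier machinery --- and the implication ``recurrent $\Rightarrow$ bounded harmonic functions are constant'' is standard yet nowhere proved in the paper, so either variant of your final step rests on a result cited rather than established here.
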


\begin{proof}
Let $h_1,h_2 \in \mathcal H(P)$. By adding a constant to $h_2$, we can assume $h_1 \leq h_2$ on $K$ and $h_1(x)=h_2(x)$ for some $x \in K$.
By Lemma~\ref{lem:Sproperties}, we infer $h_1 \leq h_2$ everywhere.

Let $M:=\{v \in K: h_1(v)=h_2(v)\} \neq \emptyset$. We now show $M=K$. Suppose not. Then, by connectedness of $K$, there exist $v\sim w$ with $v \in M$ and $w \in K\setminus M$. Hence, $\Delta h_1(v) < \Delta h_2(v)$ contradicting $\Delta h_1 = \Delta h_2=0$. Thus, $h_1=h_2$ on $K$. Therefore,
$h_1 = Sh_1 = Sh_2 = h_2$. This shows $H(P) = h_1+ \R$ and finishes the proof.
\end{proof}

We now show that $\mathcal H(P)$ stays invariant under finite variations of $P$.

\begin{lemma}\label{lem:HsameFinite}
Let $P_i=(X_i,Y_i,K_i)$ for $i=1,2$ be connected salami partitions with $X_i \setminus X_j$ and $Y_i \setminus Y_j$ finite for $i,j=1,2$.
Then, $\mathcal H(P_1)= \mathcal H(P_2)$.
\end{lemma}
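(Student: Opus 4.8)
The plan is to absorb all the disagreement between $P_1$ and $P_2$ into a single common refinement. By hypothesis the sets $X_1\setminus X_2$, $X_2\setminus X_1$, $Y_1\setminus Y_2$, $Y_2\setminus Y_1$ are finite, and $K_1,K_2$ are finite, so their union $S$ is finite. Mimicking the ball construction from the proof of Lemma~\ref{lem:connectedSalami}, I would set $R:=\max_{v,w\in S}d(v,w)<\infty$ and take $\widetilde K:=B_R(S)$; this $\widetilde K$ is finite, connected, and contains $K_1\cup K_2$ as well as the symmetric differences $(X_1\setminus X_2)\cup(X_2\setminus X_1)$ and $(Y_1\setminus Y_2)\cup(Y_2\setminus Y_1)$. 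By Lemma~\ref{lem:superpartition}(1), both $P_1(\widetilde K)$ and $P_2(\widetilde K)$ are then connected salami partitions.

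The key observation is that $P_1(\widetilde K)=P_2(\widetilde K)$. Indeed, a vertex of $X_1\setminus\widetilde K$ cannot lie in $X_1\setminus X_2\subseteq\widetilde K$, hence lies in $X_2$; this shows $X_1\setminus\widetilde K=(X_1\cap X_2)\setminus\widetilde K=X_2\setminus\widetilde K$, and symmetrically $Y_1\setminus\widetilde K=Y_2\setminus\widetilde K$. Since the cut of both refined partitions is $\widetilde K$ itself, the two triples coincide; call the common partition $\widetilde P$. Applying Lemma~\ref{lem:superpartition}(2) gives $\mathcal F(P_i)\subseteq\mathcal F(P_i(\widetilde K))=\mathcal F(\widetilde P)$ for $i=1,2$, and since membership in $\mathcal H$ only adds the condition $\Delta f=0$, this yields the inclusions $\mathcal H(P_1)\subseteq\mathcal H(\widetilde P)$ and $\mathcal H(P_2)\subseteq\mathcal H(\widetilde P)$.

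It remains to upgrade these inclusions to equalities. By Lemma~\ref{lem:harmonicEverywhere} each of $\mathcal H(P_1)$, $\mathcal H(P_2)$, $\mathcal H(\widetilde P)$ is nonempty, and by Lemma~\ref{lem:Honedimensional} each is a one-dimensional affine space of the form $h+\R$. A one-dimensional affine space contained in another one of the same form must coincide with it: if $\mathcal H(P_i)=h_i+\R\subseteq\widetilde h+\R=\mathcal H(\widetilde P)$, then $h_i=\widetilde h+c$ for some constant $c$, whence $h_i+\R=\widetilde h+\R$. Therefore $\mathcal H(P_1)=\mathcal H(\widetilde P)=\mathcal H(P_2)$, as claimed.

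The conceptual content lies entirely in the first two paragraphs: recognizing that finiteness of the symmetric differences is exactly what allows both partitions to be refined to one and the same partition $\widetilde P$ outside a finite connected cut. Everything afterwards is a formal consequence of the monotonicity of $\mathcal F$ in the cut (Lemma~\ref{lem:superpartition}) together with the rigidity $\dim\mathcal H=1$ already established (Lemma~\ref{lem:Honedimensional}). The only mild technical point to watch is the connectedness of $\widetilde K$, which is why I pass to the ball $B_R(S)$ rather than using $S$ itself; the verification that such a ball is connected is the same computation as in Lemma~\ref{lem:connectedSalami}, so I do not expect a genuine obstacle here.
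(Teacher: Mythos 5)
Your proof is correct and takes essentially the same route as the paper's: both pass to a common finite connected cut $\widetilde K$ containing $K_1$, $K_2$ and the symmetric differences so that $P_1(\widetilde K)=P_2(\widetilde K)$, apply Lemma~\ref{lem:superpartition} to obtain $\mathcal H(P_i)\subseteq\mathcal H(P_i(\widetilde K))$, and use the one-dimensionality from Lemma~\ref{lem:Honedimensional} to upgrade these inclusions to equalities. Your explicit ball construction of $\widetilde K$ and the verification that the two refined partitions coincide simply fill in details the paper leaves implicit.
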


\begin{proof}
Let $K$ be connected, finite and contain $K_i$ and $X_i \setminus X_j$ and $Y_i \setminus Y_j$ for $i,j=1,2$. Then, $P_1(K) = P_2(K)$.

By Lemma~\ref{lem:Honedimensional} and Lemma~\ref{lem:superpartition}, there exist $h$ and $h_i$ for $i=1,2$ such that 
\[
h_i+\R=
\mathcal H(P_i) \subseteq \mathcal H(P_i(K)) = h + \R,\quad i=1,2,
\]
implying $\mathcal H(P_1)=\mathcal H(P_1(K)) = \mathcal H(P_2(K)) = \mathcal H(P_2)$. This finishes the proof.
\end{proof}

We now show that that $\mathcal H(P)$ stays invariant when increasing $X$ and fixing $K$.

\begin{lemma}\label{lem:SameH}
Let $G=(V,w,m,d)$ be a salami and $P_i=(X_i,Y_i,K)$, $i=1,2$ be connected salami partitions with $X_1 \subseteq X_2$. Then, $\mathcal H(P_1)=\mathcal H(P_2)$.
\end{lemma}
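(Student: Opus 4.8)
The plan is to compare a harmonic function adapted to $P_2$ with the extremal extension operator of $P_1$, and then to close the remaining gap with recurrence. Throughout I write $Z := X_2 \setminus X_1 = Y_1 \setminus Y_2$; since $X_1 \subseteq X_2$ and both partitions share $K$, this is exactly the region on which $P_1$ and $P_2$ disagree, lying inside $Y_1$ from the viewpoint of $P_1$ and inside $X_2$ from the viewpoint of $P_2$. By Lemma~\ref{lem:harmonicEverywhere} both $\mathcal H(P_1)$ and $\mathcal H(P_2)$ are nonempty, and by Lemma~\ref{lem:Honedimensional} each is a single coset $h_i + \R$. Hence it suffices to produce $h_1 \in \mathcal H(P_1)$ and $h_2 \in \mathcal H(P_2)$ with $h_1 = h_2$.

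First I would fix $h_2 \in \mathcal H(P_2)$ and study $g := S(P_1)(h_2|_K) \in \mathcal F(P_1)$. Using $h_2 = S(P_2)(h_2|_K)$ (Lemma~\ref{lem:Sproperties}(1)) and the fact that $S(P_1)$ and $S(P_2)$ use the same boundary set $K$ and the same values $h_2|_K$, one checks that $g$ and $h_2$ are given by identical sup/inf formulas on $K$, on $X_1 \subseteq X_2$ (both the minimal extension), and on $Y_2 \subseteq Y_1$ (both the maximal extension), so $g = h_2$ on $V \setminus Z$. On $Z$, however, $g$ is the maximal extension (as $Z \subseteq Y_1$) while $h_2$ is the minimal extension (as $Z \subseteq X_2$), so $g \geq h_2$ there. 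Thus $g \geq h_2$ everywhere, with equality off $Z$. For $x \in K$ we have $(g-h_2)(x) = 0$ while $g - h_2 \geq 0$ elsewhere, so using $\Delta h_2(x) = 0$, $\Delta g(x) = \Delta(g-h_2)(x) = \frac{1}{m(x)}\sum_{y} w(x,y)(g-h_2)(y) \geq 0$. By Lemma~\ref{lem:NonPositiveLaplace}(3) this forces $\Delta g = 0$ on $K$, and then Lemma~\ref{lem:harmonicEverywhere} gives $g \in \mathcal H(P_1)$.

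Next I would read off the boundary data. Since $\mathcal H(P_1) = h_1 + \R$, we have $g = h_1 + c_0$ for some constant $c_0$; restricting to $K$ and using $g|_K = h_2|_K$ yields $h_2|_K = h_1|_K + c_0$. Replacing $h_2$ by $h_2 - c_0 \in \mathcal H(P_2)$, I may assume $h_1|_K = h_2|_K$, whence $g = S(P_1)(h_1|_K) = h_1$. The previous paragraph now reads $h_1 \geq h_2$ everywhere, with $h_1 = h_2$ on $V \setminus Z \supseteq K$.

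The final and main obstacle is to upgrade this to $h_1 = h_2$ when $Z$ is infinite, precisely the case a maximum principle on $Z$ cannot handle. Here I would invoke recurrence: $u := h_1 - h_2$ is harmonic on all of $V$ (both terms lie in $\mathcal H$), non-negative, and vanishes on the nonempty set $K$. Since $G$ is recurrent by Theorem~\ref{thm:recurrent}, every non-negative superharmonic function on $G$ is constant; as $u \geq 0$ is harmonic with $u = 0$ on $K$, we conclude $u \equiv 0$, i.e. $h_1 = h_2$, and therefore $\mathcal H(P_1) = h_1 + \R = h_2 + \R = \mathcal H(P_2)$. The only external input is the classical characterization of recurrence of a locally finite network by constancy of non-negative superharmonic functions, and I expect the genuine crux to be the construction of the harmonic comparison function $g$ together with this recurrence step, the latter being exactly what allows the argument to survive an infinite overlap region $Z$.
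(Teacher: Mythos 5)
Your proposal is correct, and its second half is genuinely different from the paper's argument. Both proofs open with the same move: apply the extension operator of the other partition to a function that is already harmonic, observe equality off $Z=X_2\setminus X_1$ and a one-sided inequality on $Z$ (you get $g=S(P_1)(h_2|_K)\geq h_2$; the paper takes $f\in\mathcal H(P_1)$ and $g=S(P_2)f\leq f$), and exploit the sign of $\Delta(g-h)$ where the two functions touch. From there the routes diverge. The paper never shows $g$ is harmonic; instead it forces $g=f$ outright by contradiction: choosing the first radius $R$ with a discrepancy adjacent to $\widetilde K=B_R(K)$, it gets $\Delta g\leq 0$ on $\widetilde K$ with strict inequality somewhere, contradicting Lemma~\ref{lem:NonPositiveLaplace}(2) applied to the superpartition $P_2(\widetilde K)$ via Lemma~\ref{lem:superpartition} --- a self-contained exhaustion argument that needs nothing beyond the variational machinery already in place, and it then obtains $\mathcal H(P_1)\subseteq\mathcal H(P_2)$ with the reverse inclusion by symmetry. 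You instead first upgrade $g$ to an element of $\mathcal H(P_1)$ (your observation that $\Delta g\geq 0$ on $K$, because $g-h_2\geq 0$ vanishes on $K$ and $h_2$ is harmonic, combined with Lemma~\ref{lem:NonPositiveLaplace}(3) and Lemma~\ref{lem:harmonicEverywhere}, is a clean shortcut the paper does not use in this proof), and then kill $u=h_1-h_2\geq 0$ by recurrence. This is legitimate and non-circular: Theorem~\ref{thm:recurrent} is proved before Lemma~\ref{lem:SameH} and depends only on Lemmas~\ref{lem:Sproperties}, \ref{lem:harmonicEverywhere} and \ref{lem:recurrence}. The one caveat is that you import an external classical fact --- recurrence of a connected network is equivalent to constancy of non-negative superharmonic functions --- which the paper neither states nor proves, and the paper's definition of recurrence (test functions normalized by $\|f\|_\infty=1$) is slightly nonstandard; this is harmless here because the proof of Lemma~\ref{lem:recurrence} actually produces test functions equal to $1$ at a fixed vertex of $f^{-1}(\{0\})$, i.e.\ vanishing capacity of a point, which is the standard criterion, but your write-up should make that bridge (with a citation, e.g.\ to Woess or Soardi) explicit. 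In exchange, your argument buys a conceptually transparent ``harmonic comparison plus Liouville'' structure, proves the two inclusions in one stroke via Lemma~\ref{lem:Honedimensional}, and avoids the first-discrepancy-radius bookkeeping; the paper's proof buys self-containedness and independence from any potential-theoretic input.
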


\begin{proof}
Let $Z:= X_2 \setminus X_1 $.
Let $f \in \mathcal H(P_1)$.
Let $g:=S(P_2) f \in \mathcal F(P_2)$.
Then, $g = f$ on $V \setminus Z$ and $g \leq f $ on $Z$ by Lemma~\ref{lem:Sproperties}(6).
Suppose $g(x)<f(x)$ for some $x \in Z$. Then, there exists  $R \geq 0$ such that $f = g$ on $\widetilde K := B_R(K)$ and $f(y) \neq g(y)$ for some $y \in V$ adjacent to $\widetilde K$ .
Let $\widetilde P_2 = P_2(\widetilde K)$. 
By Lemma~\ref{lem:superpartition}, we have $g \in \mathcal F(\widetilde P_2)$. Moreover $\Delta g \leq 0$ on $\widetilde K$ and $\Delta g(x)<0$ for some $x \in \widetilde K$ 
as $g(y)<g(y)$ for some $y$ adjacent to $\widetilde K$.
 This is a contradiction to Lemma~\ref{lem:NonPositiveLaplace}(2) proving $f=g$.
Thus, $f \in \mathcal H(P_2)$ showing $\mathcal H(P_1) \subseteq \mathcal H(P_2)$.
The inclusion $\mathcal H(P_1) \supseteq \mathcal H(P_2)$ follows similarly. This finishes the proof.
\end{proof}

We will now prove the first part of the main theorem.

\begin{theorem}\label{thm:ends}
Every salami has exactly two ends.
\end{theorem}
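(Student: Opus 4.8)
The plan is to prove that a salami has at least two ends directly from the definition, and then to rule out a third end by a variational comparison of the harmonic spaces attached to two salami partitions that differ only by moving a putative third end from one side to the other.

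First I would record that a salami has at least two ends. Given a salami partition $(X,Y,K)$, local finiteness together with finiteness of $K$ forces $V\setminus K$ to have only finitely many connected components, since every component must contain a vertex adjacent to $K$ and there are only finitely many such vertices. As $m(X)=m(Y)=\infty$ and each of $X,Y$ is a union of finitely many of these components, both $X$ and $Y$ contain a component of infinite measure, hence of infinite cardinality; these are two distinct infinite components of $V\setminus K$, so $G$ has at least two ends.

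Now suppose for contradiction that $G$ has at least three ends. Using Lemma~\ref{lem:connectedSalami} and the observation that enlarging the separating set never decreases the number of infinite components (removing finitely many vertices from an infinite component leaves an infinite one, and components already separated by $K$ stay separated), I would fix a single connected finite $K$ that simultaneously supports a salami partition $(X,Y,K)$ with $m(X)=m(Y)=\infty$ and for which $V\setminus K$ has at least three infinite components. As above, choose infinite-measure components $C_X\subseteq X$ and $C_Y\subseteq Y$; a third infinite component $C_3\notin\{C_X,C_Y\}$ then lies entirely in $X$ or entirely in $Y$, and by the symmetry exchanging $X$ and $Y$ I may assume $C_3\subseteq Y$. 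The heart of the argument is to compare the two connected salami partitions $P_2:=(X,Y,K)$ and $P_1:=(X\cup C_3,\,Y\setminus C_3,\,K)$, which move $C_3$ from the $Y$-side to the $X$-side. Both are valid salami partitions: the measure conditions hold because $X\cup C_3\supseteq C_X$ and $Y\setminus C_3\supseteq C_Y$ still have infinite measure, and $E(X\cup C_3,\,Y\setminus C_3)=\emptyset$ since $C_3$ is a whole component of $V\setminus K$ and hence has no edges to the remaining components. Since $X\subseteq X\cup C_3$ with $K$ fixed, Lemma~\ref{lem:SameH} gives $\mathcal H(P_1)=\mathcal H(P_2)$, and this common space is nonempty by Lemma~\ref{lem:harmonicEverywhere}. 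Picking $h$ in it, Lemma~\ref{lem:Sproperties}(8) applied to $P_1$ yields $\lim_{X\cup C_3}h=-\infty$, so $h\to-\infty$ along any sequence in $C_3$ leaving every finite set (such a sequence exists because $C_3$ is infinite and balls are finite), while the same lemma applied to $P_2$ yields $\lim_Y h=+\infty$, so $h\to+\infty$ along that very same sequence. This contradiction rules out a third end, and combined with the first paragraph it shows that $G$ has exactly two ends.

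The step I expect to require the most care is the bookkeeping in the third paragraph: one must choose $K$ so that infinite measure genuinely localizes to a single infinite component on each side (which relies on $V\setminus K$ having finitely many components) while simultaneously exposing a third infinite component, and one must keep clearly separate the infinite-measure condition needed for $P_1,P_2$ to be legitimate salami partitions from the merely infinite cardinality of $C_3$ that drives the final $\pm\infty$ contradiction.
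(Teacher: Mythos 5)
Your proposal is correct and follows essentially the same route as the paper: given a third end $C_3$ (the paper's $Z$), you compare the two connected salami partitions obtained by attaching $C_3$ to the $X$-side versus the $Y$-side, invoke Lemma~\ref{lem:SameH} to identify their nonempty spaces $\mathcal H$ (via Lemma~\ref{lem:harmonicEverywhere}), and derive the contradiction $h\to-\infty$ and $h\to+\infty$ along $C_3$ from Lemma~\ref{lem:Sproperties}(8). Your explicit verification of ``at least two ends'' and the bookkeeping isolating infinite-measure components $C_X$, $C_Y$ are details the paper leaves implicit, but the core argument is identical.
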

\begin{proof}
Suppose there is a salami $G=(V,w,m,d)$ with three or more ends. Then, there exist $X,Y,Z,K \subset V$ such that
\begin{enumerate}[(a)]
\item $V = X \dot \cup Y \dot \cup Z \dot \cup K$,
\item $m(X)=m(Y)=\infty$,
\item $|Z|=\infty$,
\item $K$ is finite and connected,
\item $E(X,Y)=E(Y,Z)=E(Z,X)=\emptyset$.
\end{enumerate}
Let $P_1 = (X ,Y \cup Z,K)$ and $P_2 = (X\cup Z,Y,K)$. Then, $P_1$ and $P_2$ are connected salami partitions and by Lemma~\ref{lem:SameH}, we have $\mathcal H(P_2) = \mathcal H(P_1) \neq \emptyset$. This is a contradiction as by Lemma~\ref{lem:Sproperties}(8),  every function in $\mathcal F(P_1)$ goes to $\infty$ along $Z$ and every function in $\mathcal F(P_2)$ goes to $-\infty$ along $Z$. This finishes the proof.
\end{proof}

Using that every salami has exactly two ends, we now show that there are harmonic functions with constant gradient.

\begin{theorem}\label{thm:constGradient}
Let $G=(V,w,m,d)$ be a salami with connected salami partition $P=(X,Y,K)$. Let $f \in \mathcal H(P)$. Then,
\begin{enumerate}[(a)]
\item $\Delta f = 0$,
\item $\nabla_+ f = \nabla_- f = 1$.
\end{enumerate}
\end{theorem}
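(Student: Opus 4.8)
Part (a) is essentially built into the definitions: since $f\in\mathcal H(P)=\{h\in\mathcal F(P):\Delta h=0\}$, we already have $\Delta f=0$, and Lemma~\ref{lem:harmonicEverywhere} is exactly what guarantees that this vanishing, which one only needs to check on $K$ when producing elements of $\mathcal H$, is in fact global. So there is nothing more to do for (a).

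For (b) I would first record the ``free half'' of the gradient identity. Because $f\in\mathcal F(P)$, Lemma~\ref{lem:Sproperties}(8) gives $\nabla_+f=1$ on $X$ and $\nabla_-f=1$ on $Y$, while $f\in Lip(1)$ gives $\nabla_\pm f\le 1$ everywhere. Hence only the complementary directions remain, namely a full \emph{descending} step at each vertex of $X\cup K$ and a full \emph{ascending} step at each vertex of $Y\cup K$. These two tasks are interchanged by the involution $f\mapsto -f$, $P=(X,Y,K)\mapsto P'=(Y,X,K)$: one checks $S(P')(-f)=-S(P)f$, so this map carries $\mathcal H(P)$ to $\mathcal H(P')$ and swaps $\nabla_+$ with $\nabla_-$. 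It therefore suffices to prove that every vertex admits a full descending neighbor, i.e.\ that $\nabla_-f=1$ on $X\cup K$.

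The engine I would use is the invariance of $\mathcal H$ under re-cutting. The plan is: given a fixed $z\in V$, construct a connected salami partition $P'=(X',Y',K')$ with $z\in Y'$ and $\mathcal H(P')=\mathcal H(P)$; then $f\in\mathcal H(P')$, and Lemma~\ref{lem:Sproperties}(8) applied to $P'$ forces $\nabla_-f(z)=1$. (Symmetrically, placing $z\in X'$ yields $\nabla_+f(z)=1$.) Here Theorem~\ref{thm:ends} is the decisive input: having \emph{exactly} two ends lets me treat $X$ and $Y$ as those two ends up to a finite core, so that any finite cut separating the two ends yields a salami partition whose parts $X',Y'$ differ from $X,Y$ only inside a bounded region. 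By Lemma~\ref{lem:HsameFinite} this finite symmetric difference gives $\mathcal H(P')=\mathcal H(P)$, and Lemma~\ref{lem:SameH} together with the enlargement $P\mapsto P(\widetilde K)$ of Lemma~\ref{lem:superpartition} handles the vertices that begin in the separator. Combining, every $z$ lies in the $Y'$-part of one admissible cut and in the $X'$-part of another, so $\nabla_+f(z)=\nabla_-f(z)=1$ for all $z$, which is (b).

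The main obstacle I anticipate is the purely combinatorial routing inside step of constructing $P'$: showing that an \emph{arbitrary} prescribed vertex can be reassigned to whichever side I want through a finite separator while preserving all salami-partition axioms, in particular $E(X',Y')=\emptyset$ and $m(X')=m(Y')=\infty$, and without enlarging the symmetric difference beyond a finite set. For a vertex sitting deep inside one end this is delicate, since detaching it from that end and reattaching it across a finite cut to the other end is exactly the operation that can fail when a third infinite piece is present; this is the precise point at which ``at most two ends'' is used, and where the hypotheses of Lemma~\ref{lem:HsameFinite} must be verified with care. I expect the analytic content to be entirely absorbed by Lemma~\ref{lem:Sproperties}(8) and the $\mathcal H$-invariance lemmas, so that the real work lies in this cut-surgery rather than in any new estimate.
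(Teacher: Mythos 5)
Your proposal is correct and follows essentially the same route as the paper: the paper also re-cuts the partition by setting $\widetilde Y = Y \cup B_R(K)$, uses Theorem~\ref{thm:ends} to get a unique infinite component of the complement (absorbing the finite leftovers into the $Y$-side and taking a finite connected separator inside it), invokes Lemma~\ref{lem:HsameFinite} to conclude $f \in \mathcal H(\widetilde P)$, and then reads off $\nabla_- f = 1$ on $B_R(K)$ from Lemma~\ref{lem:Sproperties}(8), letting $R \to \infty$. The only cosmetic differences are that the paper handles all of $B_R(K)$ at once rather than one vertex $z$ at a time, and says ``similarly'' for $\nabla_+ f$ where you make the sign involution $f \mapsto -f$, $(X,Y,K) \mapsto (Y,X,K)$ explicit; your anticipated cut-surgery obstacle is resolved exactly as you sketch, with Lemma~\ref{lem:SameH} in fact not needed.
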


\begin{proof}
It is clear that $\Delta f = 0$.
We now show $\nabla_- f = 1$. Let $R \geq 0$.
Let $\widetilde Y = Y \cup B_R(K)$.
By Theorem~\ref{thm:ends}, $V\setminus \widetilde Y$ has only one infinite connected component $X_0$, and maybe some finite connected components collected in $Y_0$.
Let $K_0 := \{x \in X_0:x \sim y \mbox{ for some } y \in \widetilde Y \}$. We see that $K_0$ is finite as every $x \in K_0$ is adjacent to some $y \in B_R(K)$.
Hence, $(X_0 \setminus K_0, \widetilde Y \cup Y_0,K_0)$ is a (not necessarily connected) salami partition.

As $X_0$ is connected, there exists $K_1$ finite and connected with $K_0 \subseteq K_1 \subseteq X_0$. Thus,
$\widetilde P=(X_0\setminus K_1, \widetilde Y \cup Y_0, K_1)$ is a connected salami partition.
By Lemma~\ref{lem:HsameFinite}, we get $f \in \mathcal H(\widetilde P)$ and thus by Lemma~\ref{lem:Sproperties}(8), we have $\nabla_-f= 1$ on $\widetilde Y \cup Y_0 \supseteq B_R(K)$.
Taking $R\to \infty$ shows $\nabla_- f=1$ everywhere.
Similarly, one can show $\nabla_+f = 1$. This finishes the proof.
\end{proof}

Employing the harmonic functions with constant gradient, we can finally prove flatness.

\begin{corollary}\label{cor:flat}
Every salami is flat.
\end{corollary}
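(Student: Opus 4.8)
The plan is to show that for every $x \in V$ we have $\kappa(x)=0$. Since $G$ is a salami it has non-negative Ollivier curvature, so $\kappa(x)=\inf_{y\sim x}\kappa(x,y)\geq 0$ holds automatically; the whole content is therefore to produce, for each $x$, a single neighbour $y\sim x$ witnessing $\kappa(x,y)\leq 0$.

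First I would fix an arbitrary $x\in V$ and invoke Lemma~\ref{lem:connectedSalami} to obtain a connected salami partition $P=(X,Y,K)$ with $x\in K$. By Lemma~\ref{lem:harmonicEverywhere} the space $\mathcal H(P)$ is non-empty, so I may pick a harmonic function $f\in\mathcal H(P)$. Theorem~\ref{thm:constGradient} then supplies exactly the two facts I need about $f$: it is globally harmonic, $\Delta f=0$, and it has constant gradient, $\nabla_+ f=\nabla_- f=1$. In particular $\nabla_+ f(x)=\max_{y\sim x}(f(y)-f(x))/d(x,y)=1$, and since $G$ is locally finite this maximum is attained at some neighbour $y\sim x$; for this $y$ we have $f(y)-f(x)=d(x,y)=:R$.

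The key step is to feed $f$ into the variational formula defining the Ollivier curvature. Because $f\in\mathcal F(P)\subseteq Lip(1)$ by Lemma~\ref{lem:Sproperties}(2), and because $f(y)-f(x)=R$, the function $f$ is an admissible competitor in the infimum defining $\kappa(x,y)$. Harmonicity then gives
\[
\kappa(x,y)\leq \frac{1}{R}\bigl(\Delta f(x)-\Delta f(y)\bigr)=\frac{1}{R}(0-0)=0.
\]
Combined with $\kappa(x,y)\geq 0$ from non-negative curvature this yields $\kappa(x,y)=0$, hence $\kappa(x)=\inf_{y'\sim x}\kappa(x,y')\leq 0$, and therefore $\kappa(x)=0$. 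As $x\in V$ was arbitrary, $G$ is flat.

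I do not expect any genuine obstacle here: the corollary is essentially an immediate consequence of the constant-gradient harmonic function produced in Theorem~\ref{thm:constGradient}. The only points requiring minor care are that the maximiser realising $\nabla_+ f(x)=1$ exists (guaranteed by local finiteness, so that the $\max$ over neighbours is attained) and that $f$ is a legitimate test function in Ollivier's formula, which follows from the Lipschitz bound $\mathcal F\subseteq Lip(1)$ together with the slope-one normalisation $f(y)-f(x)=d(x,y)$.
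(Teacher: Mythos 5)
Your proof is correct and is essentially identical to the paper's: both take the harmonic function with $\nabla_\pm f=1$ from Theorem~\ref{thm:constGradient}, pick a neighbour $y\sim x$ realising $f(y)-f(x)=d(x,y)$, and use $f$ as a test function in the variational definition of $\kappa(x,y)$ to get $\kappa(x,y)\leq \frac{1}{R}\left(\Delta f(x)-\Delta f(y)\right)=0$. Your preliminary step of choosing a connected salami partition with $x\in K$ is harmless but unnecessary, since the single function $f$ supplied by Theorem~\ref{thm:constGradient} is harmonic with gradient one on all of $V$ at once.
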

\begin{proof}
By Theorem~\ref{thm:constGradient}, there exists $f \in Lip(1)$ with $\nabla_{\pm} f = 1$ and $\Delta f=0$.  
Let $x \in V$ and $y \sim x$ with $f(y)-f(x)=d(x,y)$.
Then,
\[
\kappa(x,y) = \inf_{\substack{g \in Lip(1)\\g(y)-g(x)=d(x,y)}} \Delta g(x)-\Delta g(y) \leq \Delta f(x) - \Delta f(y) = 0
\]
showing $\kappa(x) \leq 0$. As every salami has non-negative Ollivier curvature, we conclude $\kappa(x)=0$. Since $x$ is arbitrary, this shows that every salami is flat.
\end{proof}
In fact, by the proof of the above corollary we have a stronger result:
for any $x\in V,$ there are distinct $y_i \sim x,$ $i=1,2,$ such that $\kappa(x,y_1)=\kappa(x,y_2)=0.$ 

\section{Level sets of Lipschitz harmonic functions} \label{sec:levelSets}

In this section, we show that the level sets of functions in $\mathcal H$ are connected and uniformly bounded in size. By this, we show that salamis are quasi-isometric to the line. 
From now on, we restrict ourselves to using the combinatorial distance instead of a path metric.

\subsection{Connected level sets}

We now show that the level sets of $f \in \mathcal H$ are connected.
\begin{lemma}\label{lem:levelsetConnected}
 Let $G=(V,w,m,d_0)$ be a salami, $P$ a connected salami partition, and $f \in \mathcal H(P)$. Then, $ f^{-1}([a,b))$ is connected whenever $b-a\geq 2$.

\end{lemma}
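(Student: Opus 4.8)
The plan is to argue by contradiction: assume the level set $U := f^{-1}([a,b))$ is disconnected and use this to manufacture a salami with more than two ends, contradicting Theorem~\ref{thm:ends}.

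First I would collect structural facts. Since $f \in \mathcal H(P)$, Lemma~\ref{lem:Sproperties}(9) shows $U$ is finite, and Theorem~\ref{thm:constGradient} gives $\nabla_+ f = \nabla_- f = 1$; in the combinatorial metric this means every vertex $x$ has a neighbour with value $f(x)+1$ and one with value $f(x)-1$, so from every vertex there is an infinite ``upward'' path along which $f$ increases by $1$ at each step, and an infinite ``downward'' path. Put $H := f^{-1}([b,\infty))$ and $L := f^{-1}((-\infty,a))$; since $b-a \geq 2 > 1$ and $f \in Lip(1)$, there are no edges between $H$ and $L$. Next I would show $H$ and $L$ are each connected: every vertex of $H$ carries its upward path entirely inside $H$, so every component of $H$ is infinite, and likewise for $L$; as $U$ is finite and $G$ has two ends (Theorem~\ref{thm:ends}), $V\setminus U$ has at most two infinite components, which forces $H$ and $L$ to be connected with no other components. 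Finally I would check that each component $C$ of $U$ is adjacent to both $H$ and $L$: a vertex of $C$ of maximal $f$-value must lie in $f^{-1}([b-1,b))$ (else its upward neighbour would also lie in $C$), and its upward neighbour then lies in $H$; a symmetric argument gives adjacency to $L$.

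Now suppose $U$ has components $C_1,\dots,C_k$ with $k\geq 2$. Collapsing $H$, $L$ and each $C_i$ to single vertices yields a finite graph $\Gamma$ that is bipartite between $\{H,L\}$ and $\{C_1,\dots,C_k\}$, in which each $C_i$ is joined to both $H$ and $L$; hence $b_1(\Gamma)=2k-(k+2)+1=k-1\geq 1$, and the loop $C_1\to H\to C_2\to L\to C_1$ is homologically nontrivial. Pulling back a nonzero class in $H^1(\Gamma;\Z)$ produces a surjection $\phi:\pi_1(G)\to\Z$ that vanishes on loops lying entirely in $H$ or entirely in $L$. Let $\hat G\to G$ be the associated $\Z$-cover. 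Then $\hat G$ is connected, locally finite, and has non-negative Ollivier curvature, since curvature is determined by finite balls and the covering is a local isomorphism. Because $\phi$ kills $\pi_1(H)$ and $\pi_1(L)$, the preimages of $H$ and of $L$ split into $\Z$-many disjoint connected copies $\{\tau^n\hat H\}$ and $\{\tau^n\hat L\}$, where $\tau$ generates the deck group, strung together through the lifts of the $C_i$. Each copy $\tau^n\hat H$ is one-ended, like $H$, and is attached to the rest of $\hat G$ through only finitely many lifts of the $C_i$, so its end persists as a distinct end of $\hat G$; thus $\hat G$ has infinitely many ends, all of infinite volume. In particular $\hat G$ admits a salami partition and so is a salami, whence Theorem~\ref{thm:ends} forces it to have exactly two ends, a contradiction. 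Therefore $k=1$ and $U$ is connected.

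The routine steps are the finiteness of $U$, the monotone up/down paths, and the adjacency of each slab component to $H$ and $L$. The main obstacle is the last paragraph: arranging the cover so that the connected pieces $H$ and $L$ genuinely unroll, and then rigorously counting the ends of $\hat G$. I would take particular care to (i) choose $\phi$ pulled back from $\Gamma$, so that $\pi_1(H)$ and $\pi_1(L)$ lie in its kernel and the lifts $\tau^n\hat H$ are disjoint and connected, and (ii) argue that $\tau^n\hat H$ and $\tau^m\hat H$ for $n\neq m$ lie in different ends, because any path joining their far-out regions must traverse the finitely many connecting slab-lifts. Once there are at least two infinite-volume ends, a finite separating set with two infinite-volume sides exists, so checking that $\hat G$ is a salami is immediate.
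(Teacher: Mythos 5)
Your overall strategy is the same as the paper's: assume the slab is disconnected, build a $\Z$-cover that unrolls $G$ along the edges joining one slab component to one of the two infinite sides, and contradict Theorem~\ref{thm:ends}. Your preparatory steps (finiteness of the slab via Lemma~\ref{lem:Sproperties}(9), the unit up/down paths from $\nabla_\pm f=1$, connectedness of $H$ and $L$ via Theorem~\ref{thm:ends}, adjacency of each slab component to both sides) all match the paper and are fine. But there is a genuine gap at the decisive step: you assert that $\hat G$ has non-negative Ollivier curvature ``since curvature is determined by finite balls and the covering is a local isomorphism.'' This is false as a general principle. The Ollivier curvature $\kappa(x,y)$ depends not only on the $1$-balls of $x$ and $y$ but on the distances $d(x',y')$ for $x'\in B_1(x)$, $y'\in B_1(y)$, i.e., on the cycles of length up to $5$ supported on the edge. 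A covering map preserves $1$-balls but can strictly increase these distances by unrolling short cycles, which strictly decreases curvature; the cover $\Z \to C_3$ (or $C_5$), where positive curvature drops to zero, is the standard counterexample. So nothing in your argument guarantees $\hat G$ is non-negatively curved, and with an arbitrary nonzero class pulled back from $H^1(\Gamma;\Z)$, as you allow, short cycles can indeed acquire nonzero winding and unroll.

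This gap is exactly where the hypothesis $b-a\geq 2$ must be consumed, and your proposal never uses it beyond excluding $H$--$L$ edges (for which $b-a>1$ already suffices). The paper closes the gap by choosing the class dual to the single cut $E(M_a,K_1)$ and proving that in $G$ minus these cut edges, any path reconnecting the endpoints of a cut edge has length at least $5$: the Lipschitz bound forces the last two vertices of such a path into $K_1$ (here $f(x_{n-1})<a+2\leq b$ uses $b-a\geq 2$), and the path must additionally pass through $M^b$ and twice through $K_2$. Hence no cycle of length at most $5$ has nonzero winding, the cover preserves all cycles of length at most $5$, and therefore preserves Ollivier curvature. The hypothesis is not decorative: if $b-a<2$, a $4$-cycle of the form $M_a \to K_1 \to M^b \to K_2 \to M_a$ can have winding $1$ and would unroll, so your argument cannot be completed without this step. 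To repair your proof, replace the general cohomology class by the cut class for $E(H,C_1)$ and supply the length-$\geq 5$ path argument; your end-counting sketch for $\hat G$ is also looser than needed (the paper simply exhibits three infinite-measure components of $(V\times\Z)\setminus(K\times\{0\})$), but that part is repairable along the lines you indicate.
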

\begin{proof}
We first prove that
$M_a := f^{-1}((-\infty, a))$ is connected.
Suppose $M_a$ has a finite connected component $F$. Let the minimum of $f|_F$ be attained in $x$. As $\nabla_- f(x) = 1$, there is $y \sim x$ with $f(y)< f(x)$. By minimality, $y \notin F$. As $F$ is a connected component, $y \notin M_a \setminus F$. Hence, $y \in V\setminus M_a$ implying $f(y) \geq a$ contradicting $f(y) < f(x)<a$. This shows that $M_a$ has no finite connected component.
However if $M_a$ has at least two infinite connected components, then $V \setminus f^{-1}([a,a+1))$ has at least three infinite connected components contradicting Theorem~\ref{thm:ends}. This proves the claim that $M_a$ is connected. By a similar argument, $M^b := f^{-1}([b,\infty))$ is also connected.
As the level sets are finite, we also see $m(M_a)=m(M^b)= \infty$.

We now suppose, $K := f^{-1}([a,b))$ is disconnected. Let $K_1$ be a connected component of $K$, and $K_2 := K \setminus K_1$. We observe 
\[
E(K_i,M_a) \neq \emptyset \neq E(K_i,M^b)
\]
as for every $x \in K_i$ there exist paths $(\ldots x_{-1},x_0=x,x_1 \ldots)$ with $f(x_i)-f(x_j)=i-j$ and thus, with appropriate choice of $n_a, n_b \in \Z$, one has $x_n \in M_a$ for $n<n_a$ and $x_n$ in $K_i$ for $n_a \leq n <n_b$ and $x_n \in  M_b$ for $n \geq n_b$.
Particularly, $V\setminus K_i$ is connected for $i=1,2$. Hence, $G$ stays connected when removing all edges from $M_a$ to $K_1$.

We now construct a covering graph of $G$ given by $\widetilde G=( V \times \Z,\widetilde w,\widetilde m)$ with $\widetilde m((x,n)) = m(x)$ and
\[
\widetilde w((x,n),(y,m)) = \begin{cases}
w(x,y)&: m=n \mbox{ and } (x,y) \notin E(M_a,K_1)\\
w(x,y)&: m=n+1 \mbox{ and } (x,y) \in E(M_a,K_1)\\
0&: \mbox{otherwise}.
\end{cases}
\]
We now claim that $\widetilde G$ is a salami.
Let $G'$ be the graph emerging from $G$ by setting $w'(x,y)=w'(y,x)=0$ for all $(x,y) \in E(M_a,K_1)$.
We claim that for $(x,y) \in E(M_a,K_1)$, every path from $x$ to $y$ within $G'$ has length at least $5$.
Let $(x=x_0,\ldots x_n=y)$ be a path from $x$ to $y$ in $G'$.
We notice $x_n,x_{n-1} \in K_1$ as $f(x_n) < a+1$ and $f(x_{n-1})<a+2 \leq b$ as $f \in Lip(1)$. By the same argument, the path has to contain at least two vertices in $K_2$ as the only way to exit $M_a$ within $G'$ is through $K_2$.
Moreover, the path has to contain a vertex in $M^b$ as 
the only way to exit $K_1$ within $G'$ is through $M_b$. Thus, the path must contain at least six vertices showing that its length is at least five.

By this, one can show that $\widetilde G$ is a covering of $G$ which preserves all cycles of length five or smaller. As the Ollivier curvature only depends on the cycles of length five, this shows that $G'$ also preserves the Ollivier curvature, meaning that $\widetilde G$ also has non-negative Ollivier curvature.

We now show that $\widetilde G$ has three ends with infinite measure.
This follows as $(V\times \Z) \setminus (K \times \{0\})$ has three connected components of infinite measure within $\widetilde G$, namely $M^b \times \{0\}$, and $(V \times \N_+) \cup (M^a \times \{0\})$, and $V \times (-\N_+)$. Hence, $\widetilde G$ is a salami with three ends contradicting Theorem~\ref{thm:ends}. 
This shows the falsity of our assumption that $f^{-1}([a,b))$ is disconnected, and finishes the proof.
\end{proof}

\subsection{Quasi-isometry to the line}

We show that the level sets of functions in $\mathcal H$ have uniformly bounded cardinality when assuming lower bounded edge weights. The necessity of this assumption is demonstrated in Example~\ref{ex:edgeweightnotlowerbounded}.

\begin{lemma}\label{lem:levelsetSize}
Let $G=(V,w,m,d_0)$ be a salami with connected salami partition $P=(X,Y,K)$. Suppose there is $\eps>0$ such that $w(x,y) \geq \eps$ for all $(x,y) \in E$. Let $f \in \mathcal H(P)$. Then, for all $r \in \R$, 
\[|f^{-1}([r-1,r))| \leq \frac 1 \eps \sum_{\substack{f(y)<0\\f(x)\geq 0}}  w(x,y) (f(x)-f(y)) .\] 
\end{lemma}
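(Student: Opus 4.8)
The plan is to read the right-hand side as a \emph{flux} of the harmonic function $f$ across the level $0$, to show this flux does not depend on the level, and to bound the size of the slab $f^{-1}([r-1,r))$ from below by the flux across level $r$. Throughout I would use that, by Theorem~\ref{thm:constGradient}, $f$ is harmonic with $\nabla_+ f = \nabla_- f = 1$ (with respect to $d_0$), that $f$ has finite level sets by Lemma~\ref{lem:Sproperties}(9), and that $f \in Lip(1)$, so $|f(x)-f(y)| \le 1$ for every edge $(x,y)$.

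First I would define, for $r \in \R$, the flux
\[
J(r) := \sum_{\substack{f(x)\ge r\\ f(y)<r}} w(x,y)\,(f(x)-f(y)),
\]
and check it is a finite sum of nonnegative terms: any counted edge $(x,y)$ has $f(x)\in[r,r+1)$ and $f(y)\in(r-1,r)$ by the Lipschitz bound, so both endpoints lie in the finite set $f^{-1}((r-1,r+1))$, and local finiteness bounds the number of such edges. The key step is then flux conservation: $J(r)=J(0)$ for all $r$. For $r_1 \le r_2 - 1$, set $S := f^{-1}([r_1,r_2))$, a finite set, and compute using $\Delta f = 0$ that
\[
0 = \sum_{x\in S} m(x)\,\Delta f(x) = \sum_{x\in S}\sum_{y}w(x,y)\,(f(y)-f(x)).
\]
Edges internal to $S$ cancel by antisymmetry. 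Using the Lipschitz bound together with $r_2-r_1\ge 1$, every edge crossing level $r_2$ has its lower endpoint in $S$ and every edge crossing level $r_1$ has its upper endpoint in $S$, so no edge jumps from below $r_1$ to above $r_2$; the surviving terms then regroup exactly as $J(r_2)-J(r_1)$, giving $J(r_1)=J(r_2)$. Comparing two arbitrary levels through a sufficiently distant third level yields $J(r)=J(0)$ for all $r$.

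Finally I would prove the geometric lower bound $J(r)\ge \eps\,|f^{-1}([r-1,r))|$. Writing $L := f^{-1}([r-1,r))$, for each $x\in L$ the identity $\nabla_+ f(x)=1$ provides a neighbor $y_x$ with $f(y_x)=f(x)+1\ge r > f(x)$, so $(x,y_x)$ is a crossing edge for level $r$. The map $x\mapsto\{x,y_x\}$ is injective, since the lower endpoint of the edge recovers $x$, so these are distinct crossing edges, each contributing $w(x,y_x)(f(y_x)-f(x))=w(x,y_x)\ge \eps$. As all summands of $J(r)$ are nonnegative, $J(r)\ge \sum_{x\in L} w(x,y_x)\ge \eps\,|L|$. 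Combining with $J(r)=J(0)$ and dividing by $\eps$ gives the asserted bound.

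I expect the main obstacle to be the flux-conservation step, that is, making the discrete divergence computation rigorous. The delicate points are that all relevant edge sets are finite so that the rearrangements are legitimate, and the Lipschitz argument guaranteeing that no edge bypasses the slab $S$, which is precisely why one must compare levels at distance at least one and then chain through a third level. The lower bound, by contrast, is a short consequence of the constant up-gradient and the uniform weight bound.
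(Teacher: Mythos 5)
Your proposal is correct and follows essentially the same route as the paper's proof: summing $\Delta f=0$ over a slab $f^{-1}([r_1,r_2))$ (the paper takes $M=f^{-1}([0,r))$ with $r>1$), identifying the boundary terms as fluxes across the two levels, and bounding the flux across level $r$ below by $\eps\,|f^{-1}([r-1,r))|$ via $\nabla_+ f=1$ and $w\geq\eps$. Your explicit flux-conservation statement $J(r)=J(0)$ and the injectivity check $x\mapsto\{x,y_x\}$ are just slightly more detailed packaging of the identical argument.
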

\begin{proof}

W.l.o.g., $r>1$. We consider $M := f^{-1}([0,r))$. 
By Lemma~\ref{lem:Sproperties}(9), $M$ is finite and thus,
\begin{align*}
0 = \langle \Delta f, 1_M \rangle = \left( \sum_{\substack{f(y)\geq r \\f(x)<r}} + \sum_{\substack{f(y)<0\\f(x)\geq 0}} \right) w(x,y) (f(y)-f(x))  =: s_r - s_0
\end{align*}
where the second sum $-s_0$ is negative and independent of $r$.
We write $Q(r):= f^{-1}([r-1,r))$.
For the first sum, we notice that for every $x \in Q(r)$ there is $y \sim x$ with $f(y)-f(x)=1$, implying $f(y) \geq r$ and thus,
\begin{align*}
 s_r = \sum_{\substack{f(y)\geq r \\f(x)<r}}  w(x,y) (f(y)-f(x)) 
\geq \sum_{x \in Q(r)} \eps \cdot 1 
=\eps |Q(r)| 
\end{align*}
Therefore, $|Q(r)| \leq s_0/\eps$ which finishes the proof.
\end{proof}

Combining with Lemma~\ref{lem:levelsetConnected}, we can show that $f \in \mathcal H$ is a quasi-isometric embedding.

\begin{theorem}\label{thm:quasiIsom}
Let $G=(V,w,m,d_0)$ be a salami with connected salami partition $P=(X,Y,K)$. Suppose there is $\eps>0$ such that $w(x,y) \geq \eps$ for all $(x,y) \in E$. Let $f \in \mathcal H(P)$. Then, for all $x,y \in V$,

\[
|f(x) - f(y)| \leq d(x,y) \leq |f(x) - f(y)| +  C 
\]
with
\[
C=\frac 2 \eps \sum_{\substack{f(y)<0\\f(x)\geq 0}}  w(x,y) (f(x)-f(y)) .
\]
Particularly, $f$ is a quasi-isometric embedding into $\R$.
\end{theorem}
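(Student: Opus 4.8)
The lower bound $|f(x)-f(y)| \leq d(x,y)$ is immediate since $f \in \mathcal H(P) \subseteq \mathcal F(P) \subseteq Lip(1)$ by Lemma~\ref{lem:Sproperties}(2), and we are using the combinatorial distance $d = d_0$. So the entire content is the upper bound $d(x,y) \leq |f(x)-f(y)| + C$ with $C$ as stated, namely twice the single-cut flux $\frac 1 \eps \sum_{f(y)<0,\,f(x)\geq 0} w(x,y)(f(x)-f(y))$, which by Lemma~\ref{lem:levelsetSize} is an upper bound for the size of every unit level strip $|f^{-1}([r-1,r))|$.

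My plan is to build a short path from $x$ to $y$ by walking through the level strips of $f$, using connectedness of the strips to move within a strip and using the sharp gradient to pass between adjacent strips. Concretely, set $N := \lceil |f(x)-f(y)| \rceil$; without loss of generality $f(x) \leq f(y)$. First I would locate $x$ and $y$ in their respective unit strips $Q_i := f^{-1}([a+i, a+i+1))$ for a suitable base value $a$, so that $x \in Q_0$ and $y \in Q_M$ with $M \leq |f(x)-f(y)|+1$. The key geometric facts are: (i) by Lemma~\ref{lem:levelsetConnected}, each union of two consecutive strips $f^{-1}([a+i, a+i+2))$ is connected, since its width is $\geq 2$; and (ii) by Lemma~\ref{lem:levelsetSize}, each single strip $Q_i$ has cardinality at most $C/2 = L$, where $L := \frac 1 \eps \sum_{f(y)<0,\,f(x)\geq 0} w(x,y)(f(x)-f(y))$. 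To move from a vertex in $Q_i$ to a vertex in $Q_{i+1}$, I would use connectedness of $f^{-1}([a+i,a+i+2))$ together with the bound $|f^{-1}([a+i,a+i+2))| \leq 2L$ on its size to conclude that one can travel between any two of its vertices in at most $2L$ steps; alternatively, since $\nabla_+ f = 1$ everywhere by Theorem~\ref{thm:constGradient}, every vertex has a neighbor one strip up, giving an explicit single edge crossing once one is at the right vertex.

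The cleanest route, which I would pursue, is to bound the diameter of each strip directly: since $Q_i$ is contained in a connected graph on at most $2L$ vertices (namely $f^{-1}([a+i,a+i+2))$, connected by Lemma~\ref{lem:levelsetConnected}), any two vertices sharing a strip are within combinatorial distance $2L$ of each other along a path staying in that doubled strip. Then a path from $x$ to $y$ is assembled by alternating: cross one edge upward from $Q_i$ to $Q_{i+1}$ (available since $\nabla_+f=1$), then reposition within $Q_{i+1}$ as needed. Summing, the total length is at most $M$ upward edges plus the repositioning cost. Rather than tracking repositioning at every level, the efficient bookkeeping is to note that $x$ and $y$ both lie in the connected finite set $f^{-1}([f(x), f(y)+2))$ of size at most $(M+1)\cdot L$... which is too weak; so instead I would charge the crossing cost more carefully, arguing that the total path can be routed so that within each strip one moves at most a bounded amount, yielding $d(x,y) \leq M + (\text{strip-internal steps})$ and collapsing the internal steps into the additive constant $C$. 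This is exactly where the factor $2$ in $C = 2L$ enters.

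The main obstacle is controlling the \emph{internal} (horizontal) travel within strips so that it contributes only an additive constant and not a multiplicative one per level. The naive estimate gives $O(L)$ horizontal steps per level and hence $O(ML)$ total, which does not match the claimed $|f(x)-f(y)| + C$. I expect the resolution to exploit that the level sets are not merely small but that the salami is essentially one-dimensional: the connectedness from Lemma~\ref{lem:levelsetConnected} forces a single ``tube,'' so once one enters the tube near $x$ and exits near $y$, all intermediate strips are traversed by pure vertical motion (one edge per level via $\nabla_+f=1$), and horizontal motion is only needed at the two endpoints $x$ and $y$ to reach the tube, each costing at most $L = C/2$. That is the heart of the argument and the step I would spend the most care on; everything else is the routine telescoping $d(x,y) \leq (\text{vertical}) + (\text{horizontal at }x) + (\text{horizontal at }y) \leq |f(x)-f(y)| + 1 + 2L$, absorbed into $|f(x)-f(y)| + C$.
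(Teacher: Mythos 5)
Your setup is sound and your first case is exactly the paper's: the lower bound from $\mathcal F(P)\subseteq Lip(1)$, and for $|f(x)-f(y)|<2$ the observation that $x$ and $y$ both lie in the doubled strip $f^{-1}([f(x),f(x)+2))$, which is connected by Lemma~\ref{lem:levelsetConnected} and has at most $C$ vertices by Lemma~\ref{lem:levelsetSize}, so $d(x,y)\leq C$. But for the general case you leave the heart of the argument unproven: you correctly diagnose that per-level repositioning would give $O(ML)$ rather than $|f(x)-f(y)|+C$, and then you ``expect'' the resolution to come from a global ``single tube'' structure of the salami, which you never establish and which appears nowhere in the paper. That is a genuine gap, and it is also a detour: no tube structure is needed. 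The mechanism you are missing is the steepest-descent geodesic. By Theorem~\ref{thm:constGradient}, $\nabla_-f=1$ at \emph{every} vertex, and since $d=d_0$ and $f\in Lip(1)$, every vertex $v$ has a neighbor $u$ with $f(u)=f(v)-1$ exactly. Iterating this from $y$ produces a path along which $f$ drops by exactly one per edge, with no repositioning ever required --- one never needs to be ``at the right vertex'' to cross a level, since a descent neighbor exists everywhere. Stopping the first time the value enters $[f(x),f(x)+2)$ yields $z$ with $d(y,z)=f(y)-f(z)$ (the path gives $\leq$, the Lipschitz bound gives $\geq$), and then the first case gives $d(x,z)\leq C$, so $d(x,y)\leq f(y)-f(z)+C\leq f(y)-f(x)+C$. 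Horizontal motion occurs exactly once, at the end, not ``at the two endpoints.''

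Your endpoint accounting is also not justified as written: you charge two horizontal moves of cost $L=C/2$ each, but a \emph{single} strip $Q_i$ is not shown to be connected --- Lemma~\ref{lem:levelsetConnected} only applies to unions of width at least $2$ --- so repositioning within a strip must route through the doubled strip and costs up to $2L=C$ per endpoint. Completing your plan on its own terms would therefore yield at best the constant $2C$, weaker than the theorem's stated bound (though still enough for the qualitative quasi-isometry conclusion). The paper's one-sided descent avoids both problems at once: zero horizontal cost through all intermediate levels, and a single horizontal move of cost at most $C$ inside one connected doubled strip.
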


\begin{proof}
The first inequality is clear as $f \in Lip(1)$. 
For the second inequality, first assume $|f(y)-f(x)|<2$. W.l.o.g., $f(y)>f(x)$. 
For simplicity, we write $d=d_0$.
Let $M:=f^{-1}([f(x),f(x)+2))$. Then, $x,y \in M$ and $M$ is connected by Lemma~\ref{lem:levelsetConnected}, and $|M| \leq C$ by Lemma~\ref{lem:levelsetSize}. Thus, $d(x,y) \leq C$ proving the theorem for $x,y$ with $|f(y)-f(x)| \leq 2$.

For $|f(y)-f(x)|\geq 2$, we again assume $f(y)>f(x)$ without obstruction.
As $\nabla_-f=1$, there exists $z\in V$ with $f(z) \in [f(x),f(x+2))$ and $d(y,z)=f(y)-f(z)$. As we have already shown, we have $d(x,z) \leq C$. Therefore,
\[
d(x,y) \leq d(y,z) + d(x,z) \leq f(y)-f(z) + C \leq f(y)-f(x) + C
\]
This finishes the case distinction and the proof.
\end{proof}

\section{Harmonic functions on salamis}\label{sec:harmonic}

In this section, we investigate harmonic functions on salamis.
We prove that the dimension of the space of all harmonic functions is finite, and that the dimension of the space of harmonic functions of subexponential growth is two. Moreover, we give a Cheng-Yau gradient estimate for harmonic functions on balls.
In the whole section, we only consider the combinatorial distance $d_0$ instead of a general path distance.

\subsection{Uniqueness of Lipschitz sharp harmonic functions}
Let $G=(V,w,m,d_0)$ be a salami and let 
\[
\mathcal H_0 := \{f \in Lip(1):\Delta f = 0, \nabla_- f=\nabla_+ f=1\}.
\] 
We prove the uniqueness of harmonic functions in $\mathcal H_0.$

\begin{theorem}\label{thm:H0Characterization}
Let $G=(V,w,m,d_0)$ be a salami. Let $f,g \in \mathcal H_0$. Then,
\[
f-g = const \quad \mbox{ or } \quad  f+g = const.
\]
\end{theorem}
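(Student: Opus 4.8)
The plan is to show that $\mathcal{H}_0$ consists, up to additive constants and the global sign flip $f \mapsto -f$, of a single function; the desired dichotomy then follows immediately. Concretely, fix any connected salami partition $P=(X,Y,K)$ (which exists by Lemma~\ref{lem:connectedSalami}), and recall from Lemma~\ref{lem:Honedimensional} that $\mathcal{H}(P) = h + \R$ for a fixed $h$; by Theorem~\ref{thm:constGradient} we have $h \in \mathcal{H}_0$, and after relabelling $X,Y$ we may assume $\lim_X h = -\infty$ and $\lim_Y h = +\infty$. I would first note that $\mathcal{H}_0$ is invariant under $f \mapsto -f$, since $\nabla_\pm(-f) = \nabla_\mp f$ and $\Delta(-f)=0$. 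Granting the key claim that every $f \in \mathcal{H}_0$ satisfies $f \in \mathcal{H}(P)$ or $-f \in \mathcal{H}(P)$, we obtain $f \in (h+\R)\cup(-h+\R)$ and likewise for $g$; then in the two ``parallel'' cases $f-g$ is constant, and in the two ``antiparallel'' cases $f+g$ is constant, which is exactly the statement.

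The reduction of the key claim goes through Lemma~\ref{lem:Sproperties}(8). Since $f \in \mathcal{H}_0$ is harmonic and already satisfies $\nabla_+ f = \nabla_- f = 1$ everywhere, in order to conclude $f \in \mathcal{F}(P)$ --- hence, being harmonic, $f \in \mathcal{H}(P)$ --- it suffices to verify the two limit conditions $\lim_X f = -\infty$ and $\lim_Y f = +\infty$; if instead these limits are reversed, the same reasoning yields $-f \in \mathcal{H}(P)$. Thus everything reduces to the \emph{end behaviour} of an arbitrary $f \in \mathcal{H}_0$: I must show that $f$ has a proper limit in $\{+\infty,-\infty\}$ along each of the two ends and that these two limits are opposite. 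The opposite-ness is the easy half: if $f$ tended properly to $+\infty$ on both ends, then every sublevel set $f^{-1}((-\infty,c))$ would be finite, yet the first paragraph of the proof of Lemma~\ref{lem:levelsetConnected} (which uses only $\nabla_- f = 1$) shows that $f^{-1}((-\infty,c))$ has no finite connected component, so being nonempty it is infinite, a contradiction; the case of $-\infty$ on both ends is symmetric.

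The genuine obstacle is \emph{properness}: showing that every level set $f^{-1}([a,b))$ of an arbitrary $f \in \mathcal{H}_0$ is finite, and that $f$ is not unbounded in both directions along a single end. The only inputs I would use are two. First, both $f^{-1}((-\infty,c))$ and $f^{-1}([c,\infty))$ have no finite connected component, which follows verbatim from the argument in Lemma~\ref{lem:levelsetConnected} applied with $\nabla_- f = 1$ and $\nabla_+ f = 1$ respectively. Second, $G$ has exactly two ends by Theorem~\ref{thm:ends}. The strategy is to suppose some slab $f^{-1}([a,a+2))$ is infinite and to manufacture a third end. Since $f$ is $1$-Lipschitz, $E(f^{-1}((-\infty,a)), f^{-1}([a+2,\infty))) = \emptyset$, so the graph decomposes into the two non-adjacent, infinite, finite-component-free pieces $f^{-1}((-\infty,a))$ and $f^{-1}([a+2,\infty))$ together with the infinite separating slab. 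Unrolling this configuration by a covering construction in the spirit of Lemma~\ref{lem:levelsetConnected} should produce a non-negatively curved graph with three ends of infinite measure, contradicting Theorem~\ref{thm:ends}.

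This last step is the one I expect to require the most care, since it demands controlling how the infinite slab attaches to the two ends and checking that the covering simultaneously preserves non-negative Ollivier curvature (by preserving short cycles, as in Lemma~\ref{lem:levelsetConnected}) and the infinite measure of three genuinely distinct ends. Once finiteness of the level sets is established, properness and the opposite-ends property follow as in the previous paragraph, each end carries a proper limit to $\pm\infty$ with opposite signs, and the assembly described in the first two paragraphs completes the proof.
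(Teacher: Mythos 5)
Your reduction in the first two paragraphs is sound (with two ends, each of $X$, $Y$ has exactly one infinite component, so the limit conditions in Lemma~\ref{lem:Sproperties}(8) do capture membership in $\mathcal F(P)$ up to sign), and you correctly isolate properness of an arbitrary $f\in\mathcal H_0$ as the crux. But your proposed resolution of that crux does not work. If the slab $S=f^{-1}([a,a+2))$ is infinite, the configuration you describe does not contradict Theorem~\ref{thm:ends}: by the definition of ends, exhibiting three ends requires a \emph{finite} set whose complement has three infinite components, and your separator $S$ is infinite. The covering construction of Lemma~\ref{lem:levelsetConnected} does not bridge this gap, because it uses finiteness of the slab in two essential places: the cut edge set $E(M_a,K_1)$ is finite there (since $K_1$ is contained in the finite level set $f^{-1}([a,b))$, finite by Lemma~\ref{lem:Sproperties}(9) as $f\in\mathcal H(P)$), and the three ends of the cover $\widetilde G$ are witnessed by the \emph{finite} separator $K\times\{0\}$. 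With an infinite slab, the cut set may be infinite and the three infinite pieces of the unrolled graph are separated only by $S\times\{0\}$, an infinite set, so the cover can perfectly well have just two ends in the definition's sense and no contradiction arises. Note also that the statement you need (finiteness of all level sets of every $f\in\mathcal H_0$) is a posteriori equivalent to the theorem itself, since the conclusion gives $\mathcal H_0\subseteq \pm h+\R$ with $h\in\mathcal H(P)$; deriving it by mimicking a lemma whose proof presupposes exactly that finiteness is therefore dangerously close to circular. This is a genuine gap, and it sits precisely at the step you flagged as requiring the most care.

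The paper's proof avoids this obstacle entirely by never requiring any properness of the second function. It anchors on one representative $f\in\mathcal H(P)$ (whose level sets \emph{are} finite), assumes w.l.o.g.\ that $f$ avoids $\Z+\tfrac12$, and compares an arbitrary $g\in\mathcal H_0$ with $f$ on the slabs $A=f^{-1}((-\tfrac12,\tfrac12))$ and $B=f^{-1}((\tfrac12,\tfrac32])$: after normalizing $\max_A g=0$ and flipping the sign of $f$ to align orientations, one shows $\max_B g=1$, that $g-f$ attains its supremum over $Y=f^{-1}((\tfrac32,\infty))$ on $B$ (using descending chains of $f$ only), and then propagates the equality $g-f=C$ from a maximum point through $B\cup Y$ by the strong maximum principle, finally extending to all of $V$ via the extremality of the Lipschitz extension and harmonicity of $g-f$. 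The only facts about $g$ ever used are $g\in Lip(1)$, $\Delta g=0$ and $\nabla_\pm g=1$; properness of general $\mathcal H_0$-functions comes out as a corollary of the theorem rather than going in as an ingredient. If you want to salvage your route, you would need an independent argument that level sets of every $f\in\mathcal H_0$ are finite, which is essentially a new theorem; the paper's comparison argument is the way around that.
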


\begin{proof}
W.l.o.g., $f \in \mathcal H(P)$ for some connected salami partition $P$.
W.l.o.g., $f^{-1}(\{\Z + \frac 1 2 \}) = \emptyset$.

Let $A:=f^{-1}((-\frac 1 2 , \frac 1 2))$.
W.l.o.g., $\max_A g = 0$, and let the maximum be attained at $x_0 \in A$.
Let $y_0 \sim x_0$ with $g(y_0)=g(x_0)+1 = 1$.
Then, $y_0 \notin A$.
By changing sign of $f$, we can assume $f(y_0)>\frac 1 2$.

We write  $B:= f^{-1}((\frac 1 2,\frac 3 2])$ and $Y := f^{-1}((\frac 3 2,\infty))$, and $X := f^{-1}((-\infty,-\frac 1 2]) $.

We now claim that $g-f \leq \max_B (g-f)$ on $Y$.
For simplicity, we write $d=d_0$.
Let $y \in Y$. Then, there is $z \in B$ with $f(y)-f(z)=d(y,z)$. Moreover, $g(y)-g(z) \leq d(y,z)$ implying $$g(y)-f(y) \leq g(z) - f(z)\leq \max_B g-f,$$ which yields $g-f \leq \max_B g-f$ on $Y$.

We next claim that $\max_B g = 1$. Suppose $g(x)>1$ for some $x \in B$. Then, there is $y \sim x$ with $f(y)=f(x)-1$ implying $y \in A$. Hence, $g(y) \geq g(x) - 1 >0$ contradicting $\max_A g = 0$. As $g(y_0)=1$, this shows $\max_B g = 1$.

Let $C:=\max_B (g -f)$ be attained at some $x_1 \in B$.
We notice $C>-\frac 1 2$ as $g(y_0)=1$ and $f< \frac 3 2$ on $B$.
Thus, $g(x_1) = C + f(x_1)>0$ as $f>\frac 1 2$ on $B$. Let $y_1 \sim x_1$ with $g(y_1) - g(x_1) =1$. Then, $g(y_1)>1$ implying $y_1 \notin A\cup B$. As $y_1 \sim x_1 \in B$, we see $y_1 \notin X$ and hence, $y_1 \in Y$. As $f \in Lip(1)$, we obtain $$g(y_1) -f(y_1) \geq g(y_1) - f(x_1) - 1 = g(x_1)-f(x_1) = C.$$ As $g-f \leq C$ on $Y$, this shows $g(y_1)-f(y_1)=C$. By the maximum principle applied to $g-f$, and as $B\cup Y$ is connected, and as all neighbors of $y_1$ are in $B \cup Y$, this shows $g-f = C$ on $B \cup Y$. As $f$ is the minimal 1-Lipschitz extension on $A \cup X$ when fixing $f$ on $B \cup Y$, we see $f \leq g - C$ on $A \cup X$. As $\Delta (g-f) = 0$, we infer $g-f=C$ everywhere.
\end{proof}

\subsection{Finite dimension of the space of harmonic functions}

In the following, we prove that the space of all harmonic functions on a salami is of finite dimension. In particular, we need not assume any growth condition on harmonic functions. 
\begin{theorem}\label{thm:finiteDimention}
Let $G=(V,w,m,d_0)$ be a salami. Suppose there is $\eps>0$ such that $w(x,y) \geq \eps$ for all $(x,y) \in E$. 
Then the dimension of the space of harmonic functions on $G$ is bounded above by 
\[\frac 2 \eps \sum_{\substack{f(y)<0\\f(x)\geq 0}}  w(x,y) (f(x)-f(y)),
\]
where $f \in \mathcal H(P)$ for a connected salami partition $P.$
\end{theorem}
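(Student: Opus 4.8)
The plan is to bound the dimension of the harmonic function space by controlling how a harmonic function behaves on a single ``slab'' level set, and then showing that a harmonic function is determined by its values there. First I would fix a function $f \in \mathcal H(P)$ with constant gradient (guaranteed by Theorem~\ref{thm:constGradient}), and set $N := \frac 2 \eps \sum_{f(y)<0, f(x)\geq 0} w(x,y)(f(x)-f(y))$, which by Lemma~\ref{lem:levelsetSize} bounds the cardinality $|f^{-1}([r-1,r))|$ of every unit slab, and in fact by the argument in Theorem~\ref{thm:quasiIsom} bounds $|f^{-1}([a,a+2))| \leq N$ for suitable width. The key structural idea is that $f$ behaves like an arclength coordinate along the line, so a slab $A := f^{-1}([a,a+1))$ is a finite separating set whose removal splits $G$ into the two ends; harmonic functions should be rigidly propagated from such a slab.

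The main step is to show that the restriction map $\Phi: u \mapsto u|_A$ from the space of harmonic functions to $\R^A$ is injective, which immediately gives $\dim \leq |A| \leq N$. To see injectivity, suppose $u$ is harmonic with $u|_A = 0$; I would show $u \equiv 0$. The natural mechanism is a propagation/unique-continuation argument driven by the constant gradient of $f$: since $\nabla_+ f = \nabla_- f = 1$, every vertex lies on a bi-infinite geodesic-like path along which $f$ increases by one at each step, and consecutive slabs $f^{-1}([a+k, a+k+1))$ are linked by edges. Harmonicity of $u$ means $u(x)$ is the weighted average of its neighbors, so knowing $u$ on one slab plus the local structure should determine $u$ on the adjacent slabs by solving a finite linear recursion; because each slab has size at most $N$ and the graph is (via Theorem~\ref{thm:quasiIsom}) quasi-isometric to the line with connected slabs (Lemma~\ref{lem:levelsetConnected}), the ``transfer'' across slabs is governed by finite-dimensional linear maps.

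The hard part will be making this cross-slab propagation genuinely force $u = 0$ rather than merely bounding growth: harmonicity alone relates three consecutive slabs, so vanishing on one slab does not instantly give vanishing on the next. The cleanest route is probably to argue that if $\Phi$ had a nontrivial kernel of dimension exceeding the slab size — equivalently, if $\dim \mathcal H$ exceeded $N$ — then one could produce a nonzero harmonic $u$ vanishing on a full slab and, using recurrence of the salami (Theorem~\ref{thm:recurrent}) together with the finiteness and separating property of slabs, derive a contradiction via a maximum-principle or energy argument on each of the two half-infinite ends. Concretely, I would run a maximum-principle argument on each end $f \geq a+1$ and $f \leq a$ separately: a harmonic function that is bounded on the separating slab and whose ``flux'' across the slab is controlled must be constant on each end by recurrence, so $u|_A = 0$ forces $u$ constant hence zero. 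The technical obstacle is handling harmonic functions of arbitrary (possibly exponential) growth, for which a naive maximum principle fails; I expect the resolution to exploit that the total weighted boundary $E$ across any slab has size controlled by $N$, so the relevant linear operator acting on slab-data has rank at most $N$, directly bounding the solution space.
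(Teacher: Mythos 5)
Your central step fails: the restriction map to a single slab $A = f^{-1}([a,a+1))$ is \emph{not} injective on harmonic functions, and there is a one-line counterexample inside the class of salamis. Take $G=\Z$ with unit weights and $m\equiv 1$ (a salami, with $f=\mathrm{id}\in\mathcal H(P)$). Then $A=f^{-1}([0,1))=\{0\}$, and $u(n)=n$ is harmonic, vanishes on $A$, but is nonzero; indeed here the harmonic space is two-dimensional while $|A|=1$, so the bound $\dim \leq |A|$ your injectivity would yield is simply false. The deeper reason is the one you half-identified yourself: there is no mechanism to propagate vanishing \emph{outward} from an interior slab into an infinite end. Harmonicity at the vertices of slab $k$, given vanishing on slabs $k-1$ and $k$, yields only $|{\rm slab}\;k|$ linear equations for the $|{\rm slab}\;k+1|$ unknowns, which is in general underdetermined; and your fallback --- recurrence plus a maximum principle forcing a harmonic function bounded on the separating slab to be constant on each end --- is also false, since $u(n)=n$ is bounded (zero) on the slab yet unbounded on both ends. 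Recurrence gives a Liouville property only for functions bounded on the whole end, and you explicitly leave the growth issue unresolved (``I expect the resolution to exploit\dots''), so the proposal never closes.

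The paper's proof avoids exactly this trap by never propagating outward: it takes the exhaustion $\Omega_n = f^{-1}([-n,n])$ and restricts harmonic functions to the vertex boundary $\partial\Omega_n \subset f^{-1}([-n-1,-n)\cup(n,n+1])$, which consists of \emph{two} slabs, one at each end --- this is precisely where the factor $2$ in the bound $2C$ comes from, and why your single-slab bound of $C$ cannot be right. A harmonic function vanishing on the full enclosing boundary of the \emph{finite} region $\Omega_n$ vanishes on $\Omega_n$ by the ordinary finite-domain Dirichlet maximum principle --- the legitimate inward-pointing use of the maximum principle, as opposed to your outward one. Since this only shows injectivity of $T_n\colon u\mapsto u|_{\partial\Omega_n}$ ``for $n$ large depending on the function,'' the paper fixes an arbitrary finite-dimensional subspace $Q$ of harmonic functions and runs a compactness argument: if $T_n$ failed to be injective on $Q$ for every $n$, normalized kernel elements $g_n$ would subconverge to a nonzero $g_\infty\in Q$, yet each $g_n$ vanishes on all of $\Omega_n$, forcing $g_\infty=0$, a contradiction; then $\dim Q = \dim T_n(Q)\leq|\partial\Omega_n|\leq 2C$. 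If you want to salvage your transfer-map intuition, it must be reshaped into this form: restrict to a pair of slabs enclosing a finite region and apply the maximum principle inside, rather than attempting unique continuation from one slab into the ends.
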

\begin{proof} By Lemma~\ref{lem:levelsetConnected}, 
\[|f^{-1}([r-1,r))| \leq C, \quad \forall r\in \R,\] where $C=\frac 1 \eps \sum_{\substack{f(y)<0\\f(x)\geq 0}}  w(x,y) (f(x)-f(y)).$ To prove the result, it suffices to show that for any finite dimensional linear space of harmonic functions $Q,$ $$\dim Q\leq 2C.$$

For any $n\in \N_+,$ set $\Omega_n:=f^{-1}([-n,n]).$ Then $\{\Omega_n\}_{n=1}^\infty$ is an exhaustion of $V,$ i.e.
$$|\Omega_n|<\infty,\quad \Omega_n\subset \Omega_{n+1}, \forall n\in \N_+,\quad V=\cup_{n=1}^\infty\Omega_n.$$ Denote by $\partial \Omega_n:= B_1(\Omega_n)\setminus \Omega_n$ the boundary of $\Omega_n$ in $V.$ Since $f\in Lip(1),$ $$\partial \Omega_n\subset f^{-1}([-n-1,-n)\cup (n,n+1]).$$ Hence, $$|\partial \Omega_n|\leq 2C.$$
For any $n\in \N_+,$ we define 
\[\begin{array}{ll}T_n:& Q\to \{g:\partial \Omega_n\to\R\}\\
&f\mapsto f|_{\partial\Omega_n}.
\end{array}\] We claim that $T_n$ is injective for sufficiently large $n.$
Suppose it is not true, then there exists $\{g_n\}_{n=1}^\infty\subset Q\setminus\{0\}$ such that $T_n(g_n)=0,\forall n\in \N_+.$ Let $\dim Q=k$ and let $\{f_i\}_{i=1}^k$ be a basis of $Q.$ W.l.o.g., we assume that $$g_n=\sum_{i=1}^k a_i^n f_i,\quad \sum_{i=1}^k(a_i^n)^2=1.$$ By the compactness of $(k-1)$-dimensional unit sphere $\mathbb{S}^{k-1},$ there exists a subsequence, still denoted by $\{(a_1^n, \cdots, a_k^n)\}_n,$ and $(a_1^\infty, \cdots, a_k^\infty)\in \mathbb{S}^{k-1}$ such that
$$(a_1^n, \cdots, a_k^n)\to (a_1^\infty, \cdots, a_k^\infty),\quad n\to\infty.$$ Set $g_\infty=\sum_{i=1}^k a_i^\infty f_i.$ Note that $g_\infty\neq 0.$ Moreover, $g_n\to g_\infty$ pointwise as $n\to\infty$. Since $T_n(g_n)=g_n|_{\partial \Omega_n}=0,$ by the maximum principle of harmonic functions, $g_n|_{\Omega_n\cup \partial \Omega_n}=0.$ Since $\{\Omega_n\}_n$ is an exhaustion of $V,$ $g_n\to 0, n\to\infty.$ This yields a contradiction, and proves the claim.

Since $T_n$ is injective for sufficiently large $n,$
$$\dim Q= \dim T_n(Q)\leq |\partial \Omega_n|\leq 2C.$$ This finishes the proof.
\end{proof}

\subsection{Harmonic functions of subexponential growth}

In this subsection, we prove that all harmonic functions with subexponential growth are multiples of functions in $\mathcal H_0$.
Particularly, the dimension of the space of harmonic functions with subexponential growth is two.

We first prove that salamis have a global upper bound for the weight of edges $(x,y)$ with $f(x) \neq f(y)$ for some $f \in \mathcal H_0$.
In Example~\ref{ex:noGlobalEdgeWeightBound} it is shown that one cannot get an upper edge weight bound for all edges.
\begin{lemma}\label{lem:wUpperBound}
Let $G=(V,w,m,d_0)$ be a salami. Let $f \in \mathcal H_0$. Then, there exists $C \in \R$ such that $w(x,y) \leq C$ whenever $f(x) \neq f(y)$.
\end{lemma}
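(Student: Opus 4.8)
The plan is to combine a level-set flux identity for $f$ with non-negative Ollivier curvature. I first reduce to the case $f \in \mathcal{H}(P)$ for some connected salami partition $P$. Indeed, $\mathcal{H}(P) \neq \emptyset$ by Lemma~\ref{lem:harmonicEverywhere}, and every $h \in \mathcal{H}(P)$ lies in $\mathcal{H}_0$ by Lemma~\ref{lem:Sproperties}(2) and Theorem~\ref{thm:constGradient}; hence Theorem~\ref{thm:H0Characterization} gives $f = \pm h + \mathrm{const}$. Since replacing $f$ by $-f$ or by $f + \mathrm{const}$ changes neither the hypothesis $f(x) \neq f(y)$ nor the value $w(x,y)$, I may assume $f = h \in \mathcal{H}(P)$. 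In particular $f$ has finite level sets by Lemma~\ref{lem:Sproperties}(9).

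I then record the flux identity. Testing $\Delta f = 0$ against $1_{f^{-1}([0,r))}$ exactly as in Lemma~\ref{lem:levelsetSize} shows that $s_r := \sum_{f(x) < r \leq f(y)} w(x,y)(f(y)-f(x))$ is finite and independent of $r$; write $S$ for this common value. For any edge with $f(x) < f(y)$ and any $r \in (f(x), f(y))$, the edge occurs in $s_r$ with a non-negative summand, so $w(x,y)(f(y)-f(x)) \leq S$. This already bounds $w(x,y)$ on every edge whose $f$-jump is bounded away from $0$, and reduces the lemma to excluding edges that are simultaneously heavy and of small jump.

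To exclude them I would use the constant gradient together with $\kappa \geq 0$. Let $(x_0,y_0)$ be an edge with $f(x_0) < f(y_0)$ and large weight $W = w(x_0,y_0)$. Since $\nabla_+ f(x_0) = 1$, there is a neighbour $z$ with $f(z) = f(x_0)+1$, and $\kappa(x_0,z) = 0$ by Corollary~\ref{cor:flat}. I test non-negative curvature at $(x_0,z)$ with the $1$-Lipschitz function $g := \min\{f,\ (f(x_0)-1) + d_0(\,\cdot\,, y_0)\}$, which (in the generic case $y_0 \not\sim z$) satisfies $g(x_0) = f(x_0)$, $g(z) = f(x_0)+1$ and $g(y_0) = f(x_0)-1$. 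Because $g(z)-g(x_0)=1$, the definition of $\kappa$ forces $\Delta g(z) \leq \Delta g(x_0)$. The heavy edge contributes $-W/m(x_0)$ to $\Delta g(x_0)$; the within-level neighbours of $x_0$ contribute $0$, the down-neighbours contribute negatively, and, crucially, the flux bound shows that the total positive (up-crossing) contribution at $x_0$ is at most $S/m(x_0)$. Thus $\Delta g(x_0) \leq (S-W)/m(x_0)$, so that $W$ is controlled as soon as $\Delta g(z)$ admits a lower bound independent of $W$.

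The main obstacle is precisely this lower bound on $\Delta g(z)$, and producing a genuinely \emph{uniform} constant $C$. The negative part of $\Delta g(z)$ comes from neighbours of $z$ that get pulled down by $g$, and among the within-level neighbours of $z$ these are exactly the common neighbours of $z$ and $y_0$; by Example~\ref{ex:noGlobalEdgeWeightBound} such within-level edges may carry arbitrarily large weight, which would spoil the estimate. Handling this configuration --- together with the degenerate case $y_0 \sim z$, which I would treat symmetrically using the jump-$1$ down-neighbour of $y_0$ and $-f \in \mathcal{H}_0$, and the appearance of the vertex measure $m(x_0)$ --- is the heart of the argument; I expect it to require choosing $(x_0,y_0)$ of essentially maximal weight and feeding the flux identity back in, so that the within-level weights at $z$ cannot simultaneously be large.
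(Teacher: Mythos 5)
Your first two steps coincide with the first half of the paper's proof: the reduction to $f \in \mathcal H(P)$ via Theorem~\ref{thm:H0Characterization}, and the flux bound. Indeed, writing $S := \langle \Delta 1_X, f\rangle$, harmonicity makes the up-crossing flux across any sublevel cut equal to $S$, all crossing terms are non-negative, and hence $w(x,y)\,(f(y)-f(x)) \leq S$ for every edge with $f(x)<f(y)$ --- exactly as you argue. But your second half, the curvature test meant to exclude heavy edges with small jump, contains a genuine gap, and you say so yourself: you have no lower bound on $\Delta g(z)$, and the obstruction you identify is real --- by Example~\ref{ex:noGlobalEdgeWeightBound}, within-level neighbours of $z$ may be joined to common neighbours of $y_0$ by edges of arbitrarily large weight, so no estimate of the kind you propose can be uniform, and the suggested repair (choosing $(x_0,y_0)$ of maximal weight and ``feeding the flux identity back in'') is not an argument. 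As it stands, your proof only covers edges whose jump is bounded away from zero.

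The missing idea is that small jumps never occur, which makes the entire second half unnecessary. Since $f \in \mathcal F(P)$ and the metric is the combinatorial distance $d_0$, the extremal extension formula gives $f(v) \in f(K) + \Z$ for every $v \in V$: on $X$ one has $f(v) = \max_{w \in K} f(w) - d_0(v,w)$, and similarly on $Y$. Hence every difference $f(y)-f(x)$ lies in $f(K)-f(K)+\Z$, which is a finite union of translates of $\Z$ because $K$ is finite; this is a closed discrete set, so its nonzero elements satisfy $|f(y)-f(x)| > \eps$ for some $\eps > 0$ depending only on $f|_K$. (Note that $\eps$ may be less than $1$, cf.\ Example~\ref{ex:noIntegerH0}, but it is positive.) Combined with your flux bound this yields $w(x,y) \leq S/\eps$ whenever $f(x) \neq f(y)$, which is the paper's proof: no curvature testing, no case distinction on $y_0 \sim z$, and no appearance of the vertex measure is needed.
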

\begin{proof}
Let $P=(X,Y,K)$ be a salami partition. By Theorem~\ref{thm:H0Characterization}, we can assume $f \in \mathcal H(P)$.
As $f \in \mathcal F(P)$ and as we consider the combinatorial graph distance, we have $f(V) \subseteq f(K) + \Z$.
As $K$ is finite,  there is $\eps> 0$ such that $|f(y)-f(x)|> \eps$ whenever $f(y) \neq f(x)$.
Let $x_0\sim y_0$ with $f(y_0)>f(x_0)$. We write $A:=f^{-1}((-\infty, f(x_0)])$ and notice
\[
\langle \Delta 1_X, f \rangle = \langle \Delta 1_A,f \rangle = \sum_{\substack{x \in A \\ y \notin A}} (f(y) - f(x)) w(x,y)  \geq (f(y_0)-f(x_0))w(x_0,y_0) \geq \eps w(x_0,y_0).
\]
Hence, $w(x,y) \leq \frac{\langle \Delta 1_X, f \rangle}{\eps}$
 whenever $f(x) \neq f(y)$. This finishes the proof.
\end{proof}

We next show that certain harmonic functions on salamis have a sign change close to every vertex.
\begin{lemma}\label{lem:signBothSides}
Let $G=(V,w,m,d_0)$ be a salami with
 salami partition $P=(X,Y,K)$.
Let $f \in \mathcal H_0$.
Let $u$ be a harmonic function. Suppose that $\langle u , \Delta 1_X \rangle = 0$.
Let $A_r := f^{-1}((r-1,r])$.
Then there exists $c \in \R$ such that $\max_{A_r} u \geq c$ and $\min_{ A_r} u \leq c$ for all $r\in \R$.
\end{lemma}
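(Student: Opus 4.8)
The plan is to reduce the claim to a pairwise separation statement about the slabs, to translate the hypothesis $\langle u,\Delta 1_X\rangle=0$ into the vanishing of the flux of $u$ across every level of $f$, and then to forbid a "jump" between two slabs by a maximum-principle-plus-recurrence argument.

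\textbf{Flux across every level.} By Theorem~\ref{thm:H0Characterization} I may assume $f\in\mathcal H(P)$, so by Lemma~\ref{lem:Sproperties} the level sets of $f$ are finite, $\lim_X f=-\infty$ and $\lim_Y f=+\infty$. For $t\in\R$ set $\Phi(t):=\sum_{f(x)\le t<f(y)}w(x,y)(u(y)-u(x))$. Since level sets are finite, $\Delta 1_{\{f\le t\}}$ has finite support, and a direct computation gives $\langle u,\Delta 1_{\{f\le t\}}\rangle=\Phi(t)$. The key point is that $1_X-1_{\{f\le t\}}\in C_c(V)$: it equals $1$ on $X\cap\{f>t\}$ (finite since $\lim_X f=-\infty$) and $-1$ on $\{f\le t\}\cap(K\cup Y)$ (finite since $K$ is finite and $\lim_Y f=+\infty$). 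Using $\langle u,\Delta g\rangle=\langle\Delta u,g\rangle=0$ for $g\in C_c(V)$ together with the hypothesis, I get $\Phi(t)=\langle u,\Delta 1_{\{f\le t\}}\rangle=\langle u,\Delta 1_X\rangle=0$ for all $t$.

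\textbf{Reduction to a pairwise statement.} The asserted conclusion is precisely that $\bigcap_r[\min_{A_r}u,\max_{A_r}u]\neq\emptyset$. By Helly's theorem for intervals of $\R$, this holds if and only if every two of these intervals meet, i.e. if and only if $\min_{A_r}u\le\max_{A_s}u$ for all $r,s$. So I assume for contradiction that $\min_{A_r}u>\max_{A_s}u$ for some $s<r$ (the case $s>r$ being symmetric), and choose $v\notin u(V)$ strictly between, so that $u>v$ on $A_r$ and $u<v$ on $A_s$. I write $P=\{u>v\}$ and $N=\{u<v\}$, a partition of $V$ with $A_r\subseteq P$ and $A_s\subseteq N$.

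\textbf{Geometry of the sign partition.} Since $u$ is harmonic, the discrete maximum principle on a finite component of $P$ (whose boundary lies in $N$, hence has $u<v$) would force $u<v$ on that component, a contradiction; thus every component of $P$, and likewise of $N$, is infinite. Each slab $A_t$ separates $\{f\le t-1\}$ from $\{f>t\}$ because edges change $f$ by at most $1$, and by Theorem~\ref{thm:ends} together with Lemma~\ref{lem:levelsetConnected} both halves are connected. Hence the component of $P$ meeting $A_r\subseteq\{f>s\}$ cannot cross the separating slab $A_s\subseteq N$, so it lies in $\{f>s\}$ and, being infinite, reaches the upper end; symmetrically the component of $N$ meeting $A_s$ lies in $\{f\le r-1\}$ and reaches the lower end.

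\textbf{Contradiction, and the main obstacle.} Consider $\psi:=\min(u,v)$, which is superharmonic, satisfies $\psi\le v$ with $\psi\equiv v$ on $P$ and $\psi=u<v$ on $N$. If $u$ is bounded below, then $\psi$ is bounded below, so by recurrence (Theorem~\ref{thm:recurrent}, in the form that every superharmonic function bounded below on a recurrent graph is constant) $\psi$ is constant, forcing $u\ge v$ everywhere and contradicting $A_s\subseteq N$; the symmetric argument with the subharmonic $\max(u,v)$ handles the case $u$ bounded above. The main obstacle is the remaining case in which $u$ is unbounded in both directions, where neither truncation is directly controlled. I expect to resolve this by running the recurrent random walk from a vertex of $A_r$ and stopping it at the first entry time $\tau$ into $N$: recurrence makes $\tau$ a.s.\ finite, $u(X_n)$ is a martingale that stays $>v$ for $n<\tau$, and an optional-stopping argument (controlling the single downward crossing at time $\tau$, for instance through the cutoff functions of vanishing Dirichlet energy supplied by recurrence) contradicts $\mathbb{E}[u(X_\tau)]<v<u(X_0)$. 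Making this boundary control rigorous is the delicate step of the proof.
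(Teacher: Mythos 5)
Your first two steps are sound and match the paper's opening moves: the Helly reduction is just the contrapositive of $\inf_r\max_{A_r}u<\sup_r\min_{A_r}u$, and your flux identity $\langle u,\Delta 1_{\{f\le t\}}\rangle=\langle u,\Delta 1_X\rangle=0$ (via $1_{\{f\le t\}}-1_X\in C_c(V)$ and harmonicity of $u$) is exactly the right mechanism. But the proof is not complete: you close the contradiction only when $u$ is bounded on one side, and you yourself flag the two-sided unbounded case as unresolved. That case is the heart of the lemma. Note that it is precisely where the hypothesis $\langle u,\Delta 1_X\rangle=0$ must do work — it is what excludes $u=f$ itself, for which the conclusion fails — and yet your contradiction step never uses the flux identity you established: the $\min(u,v)$/recurrence argument uses only harmonicity and a one-sided bound. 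Your proposed optional-stopping repair sits exactly on the difficulty: for unbounded harmonic $u$, a.s.\ finiteness of $\tau$ plus the martingale property do not give $\mathbb{E}[u(X_\tau)]=u(X_0)$ (on $\Z$ with $u(x)=x$ and $\tau$ the hitting time of $\{-1\}$, one has $\mathbb{E}[u(X_\tau)]=-1\neq 0$, even though the walk is recurrent), so the flux hypothesis would have to be injected into the boundary-crossing control, and you give no mechanism for doing that.

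The missing idea in the paper's proof is to test the flux identity not against a sublevel set of $f$ alone, but against a mixed set built from sublevel sets of both $f$ and $u$. Assuming $\max_{A_r}u<c<\min_{A_R}u$ with $R>r$, take
\[
A:=f^{-1}((-\infty,r-1])\cup\bigl(u^{-1}((-\infty,c])\cap f^{-1}((-\infty,R])\bigr).
\]
Since $A$ agrees with a sublevel set of $f$ deep in $X$ and is empty deep in $Y$, one has $1_A-1_X\in C_c(V)$, so $\langle u,\Delta 1_A\rangle=0$ exactly as in your first step. A short case check using $f\in Lip(1)$ and $u<c$ on $A_r$ shows that every edge $x\sim y$ with $x\in A$, $y\notin A$ satisfies $u(x)\le c<u(y)$; since the graph is connected and $A$ is nonempty and proper (indeed $A_R\cap A=\emptyset$), at least one such edge exists, giving $\langle u,\Delta 1_A\rangle=\sum_{A\ni x\sim y\notin A}(u(y)-u(x))w(x,y)>0$, a contradiction. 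This argument is elementary, needs neither recurrence nor any probabilistic machinery, and handles bounded and unbounded $u$ uniformly — with it, your sign-partition geometry and the recurrence/Liouville step become unnecessary.
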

\begin{proof}
Suppose not. Then, $\inf_r \max_{A_r} u < \sup_r \min_{A_r} u$.
Hence, there exist $r,R$ such that $\max_{A_r} u < c < \min_{A_R} u$.
W.l.o.g., $R>r$.
Let 
\[
A:= f^{-1}((-\infty,r-1]) \cup     \left(u^{-1}((-\infty,c] )
\cap f^{-1}((-\infty,R])\right)
\]
As $\langle u,\Delta 1_X \rangle = 0$ and as $u$ is harmonic, we also have $\langle u , \Delta 1_A \rangle = 0$.
We claim that for any $A \ni x \sim y \notin A$, we have $u(x)\leq c < u(y)$.
Suppose $u(x)>c$. Then, $x \in A_{r-1}$ and $y \in A_r$. However, $u<c$ on $A_r$ implying $y \in A$ which is a contradiction showing $u(x) \leq c$. Similarly we obtain $u(y)>c$.
Thus,
\[
\langle u,\Delta 1_A \rangle = \sum_{\substack{x\in A \\ y \notin A}} (u(y)-u(x))w(x,y) >0
\]
This is a contradiction proving the lemma.
\end{proof}

Assuming a lower bound on the edge weight, we finally prove that harmonic functions with subexponential growth have to be constant or to coincide with a multiple of a function in $\mathcal H_0$.

\begin{theorem}
Let $G=(V,w,m,d_0)$ be a salami. Suppose $0< \eps< w(x,y)$ for all $x \sim y$. Let $u:V\to \R$ be harmonic and $x_0 \in V$. Assume 
\[
\max_{B_r(x_0)} |u| = e^{o(r)}, \quad \mathrm{as}\ r\to\infty, 
\]
Then, $u \in \R\mathcal H_0 + const$.

\end{theorem}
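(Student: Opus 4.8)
The plan is to subtract a suitable multiple of a reference function $f \in \mathcal H_0$ so as to reduce $u$ to a harmonic function that is forced to be constant along the level sets of $f$; the quantitative engine is a recursion showing that the variance of a harmonic function along these level sets grows geometrically unless it vanishes identically, which contradicts subexponential growth.

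First I would fix a connected salami partition $P=(X,Y,K)$ and a function $f \in \mathcal H(P)\subseteq \mathcal H_0$ (Theorem~\ref{thm:constGradient}), and denote by $L_n$ the level sets of $f$, which are finite (Lemma~\ref{lem:Sproperties}(9)) and connected (Lemma~\ref{lem:levelsetConnected}). For a harmonic $h$ I consider the flux $\Phi(h):=\langle h,\Delta 1_X\rangle$. Summing $\Delta h=0$ over a finite slab $f^{-1}((s,t])$ and using that only finitely many edges cross each level set shows $\Phi(h)$ equals the net current across any cut, hence is independent of the cut; the computation in the proof of Lemma~\ref{lem:wUpperBound} gives $\Phi(f)>0$. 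Setting $\lambda:=\Phi(u)/\Phi(f)$ and $v:=u-\lambda f$, the function $v$ is harmonic, of subexponential growth (as $|f|\le d_0(\cdot,x_0)+|f(x_0)|$ grows linearly), and satisfies $\langle v,\Delta 1_X\rangle=0$, so Lemma~\ref{lem:signBothSides} applies. Since $v$ and $u$ differ by $\lambda f+const$, which is constant on each $L_n$, it suffices to show $v$ is constant on every $L_n$: harmonicity of $f$ forces $\sum_{y\in L_{n+1}}w(x,y)=\sum_{y\in L_{n-1}}w(x,y)$ at every $x\in L_n$, and from this a function constant on level sets is harmonic only if it is affine in $n$, so $v=\alpha f+\beta$, whence $u=(\alpha+\lambda)f+\beta\in \R\mathcal H_0+const$ (recall $\mathcal H_0=\{\pm f+c\}$ by Theorem~\ref{thm:H0Characterization}).

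I would then introduce the variance along level sets. With $W_n:=m(L_n)$, $\mu_n:=W_n^{-1}\sum_{x\in L_n}m(x)v(x)$ and
\[
\Sigma_n:=\sum_{x\in L_n} m(x)\bigl(v(x)-\mu_n\bigr)^2,
\]
the quantity $\Sigma_n$ is unchanged by the affine reduction. By the quasi-isometry $f$ (Theorem~\ref{thm:quasiIsom}) one has $L_n\subseteq B_{|n|+C'}(x_0)$, so $\Sigma_n\le \bigl(\operatorname{osc}_{L_n} v\bigr)^2 W_n=e^{o(n)}$ as $|n|\to\infty$. The crux is a three-term recursion for $\Sigma_n$. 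Testing $\Delta v=0$ against $(v-\mu_n)1_{L_n}$ and symmetrizing the within-level edges gives
\[
E_n^{\mathrm{hor}}=\sum_{\substack{x\in L_n\\ y\in L_{n-1}\cup L_{n+1}}} (v(x)-\mu_n)\,w(x,y)\,(v(y)-v(x)),
\]
where $E_n^{\mathrm{hor}}:=\tfrac12\sum_{x,y\in L_n}w(x,y)(v(x)-v(y))^2$ is the horizontal Dirichlet energy. Connectedness of $L_n$, the bound $|L_n|\le C$ (Lemma~\ref{lem:levelsetSize}) and the lower bound $w\ge\eps$ yield a uniform Poincaré inequality $E_n^{\mathrm{hor}}\ge\lambda_*\Sigma_n$, while Cauchy--Schwarz together with the upper weight bound of Lemma~\ref{lem:wUpperBound} controls the right-hand side by the vertical currents into $L_{n\pm1}$. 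Combining these, and using the sign change from Lemma~\ref{lem:signBothSides} to exclude the degenerate case of constant nonzero variance, I expect the gapped discrete convexity
\[
\Sigma_{n+1}+\Sigma_{n-1}\ge (2+\delta)\,\Sigma_n,\qquad \delta>0.
\]
A nonzero solution of this inequality cannot be subexponential: if $\Sigma_{n_0}>0$, strict convexity with a definite gap forces $\Sigma_n\ge c\,\rho^{|n-n_0|}$ in at least one direction, with $\rho=\tfrac12\bigl((2+\delta)+\sqrt{(2+\delta)^2-4}\bigr)>1$, contradicting $\Sigma_n=e^{o(n)}$. Hence $\Sigma_n\equiv 0$, $v$ is constant on every level set, and the reduction step finishes the proof.

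The main obstacle is establishing the gapped recursion with constants uniform in $n$. Two points require care: the level sets $L_n$ vary in cardinality, so the natural transfer map between consecutive levels need not be invertible and one cannot simply diagonalize a fixed transfer matrix; and the uniform Poincaré constant $\lambda_*$ must be extracted from connectedness and $|L_n|\le C$ together with $w\ge\eps$ without an upper bound on $m$ spoiling the comparison, which suggests working with a measure-normalized variance. Verifying that the zero-flux and sign-change normalization genuinely rules out constant positive variance, and organizing the bookkeeping between horizontal variance and vertical current so that the vertical terms amplify rather than cancel, is where the real work lies; the formal flux reduction and the elementary analysis of the scalar recursion are routine by comparison.
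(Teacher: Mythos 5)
Your outline shares the paper's skeleton---the flux normalization $v=u-\lambda f$ with $\langle v,\Delta 1_X\rangle=0$ (and your justification that $\Phi(f)>0$ via the computation in Lemma~\ref{lem:wUpperBound} is exactly right), the sign-change input from Lemma~\ref{lem:signBothSides}, bounded level sets from Lemma~\ref{lem:levelsetSize}, and a ``some quantity must grow exponentially'' contradiction with $e^{o(r)}$ growth---but the quantitative engine, the gapped recursion $\Sigma_{n+1}+\Sigma_{n-1}\geq(2+\delta)\Sigma_n$, is only \emph{conjectured} (``I expect''), and as you have set it up it cannot be established. Three concrete problems: (i) $\Sigma_n$ is weighted by $m$, but the theorem assumes only $w\geq\eps$ and places no upper bound whatsoever on $m$ (the chains in Example~\ref{ex:edgeweightnotlowerbounded} have $m_X(n)=2^{n-1}\vee 1$, so exponential growth of $m$ along an end is entirely compatible with non-negative curvature). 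Hence neither your a priori bound $\Sigma_n\leq(\operatorname{osc}_{L_n}v)^2\,m(L_n)=e^{o(n)}$ nor a uniform Poincar\'e constant $\lambda_*$ (which compares the $w$-Dirichlet energy, bounded below by $\eps$, to an $m$-weighted variance) is available; and normalizing by $m(L_n)$, as you suggest, destroys the summation-by-parts identity obtained from testing $\Delta v=0$ against $(v-\mu_n)1_{L_n}$, since the pairing $\langle\cdot,\Delta\cdot\rangle$ is exactly what makes $m$ cancel. (ii) Your width-one level sets $L_n$ need not be connected: Lemma~\ref{lem:levelsetConnected} only gives connectedness of $f^{-1}([a,b))$ for $b-a\geq 2$, which is precisely why the paper works with the width-two slabs $D_r=C_{r+1}\setminus C_r$, $C_r=f^{-1}((-\infty,2r])$. (iii) Your endgame claim that harmonicity of $f$ forces $\sum_{y\in L_{n+1}}w(x,y)=\sum_{y\in L_{n-1}}w(x,y)$ at each $x\in L_n$ is false when $f$ is not integer-valued, which happens (Example~\ref{ex:noIntegerH0}, where $f(x)=x/2$ and a vertex has up/down cross-slab weights $1$ and $2$); it is also unnecessary, since once $v$ is constant on each slab, the zero-flux condition across each cut forces the slab constants to agree, so $v$ is constant outright rather than merely affine.

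The paper avoids all of this by replacing the variance with an $m$-free flux quantity: $\alpha_r:=\langle\Delta 1_{C_r},u^2\rangle=\sum_{x\in C_r,\,y\notin C_r}w(x,y)\bigl(u^2(y)-u^2(x)\bigr)$. For harmonic $u$ one has the pointwise identity $\Delta u^2\geq\frac{\eps}{m}|\nabla u|^2$, so $\alpha_{r+1}-\alpha_r=\langle 1_{D_r},\Delta u^2\rangle\geq\eps\sum_{x\in D_r}|\nabla u(x)|^2$; the sign change (Lemma~\ref{lem:signBothSides}) together with connectedness and $|D_r|\leq c$ produces a vertex $z\in D_r$ with $c|\nabla u(z)|\geq\max_{D_r}|u|$, while $\alpha_r\leq Cc^2\max_{D_r}u^2$ uses the crossing-edge bound of Lemma~\ref{lem:wUpperBound}. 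This yields the one-step inequality $\alpha_{r+1}-\alpha_r\geq\frac{\eps}{Cc^4}\alpha_r$---no three-term recursion, no transfer-matrix issue, no Poincar\'e constant---so subexponential growth forces $\alpha_r\leq 0$, the mirrored cut $C_r'=f^{-1}((2r,\infty))$ gives $\alpha_r\geq 0$, and $\alpha_r\equiv 0$ kills the gradient, making $v$ constant. If you swap your $\Sigma_n$ for this flux of $u^2$ across the sublevel cuts, your plan essentially becomes the paper's proof; as written, the unproven recursion and the $m$-weighting are a genuine gap.
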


\begin{proof}
Let $P=(X,Y,K)$ be a salami partition.
By subtracting a function in $\R \mathcal H_0$ from $u$ we can assume that
$\langle u, \Delta 1_X \rangle = 0$.
We write $C_r = f^{-1}((-\infty,2r])$ and
$D_r = C_{r+1} \setminus C_r$. 
 By Lemma~\ref{lem:signBothSides}, we can  assume $\max_{D_r} u \geq 0 \geq \min_{D_r} u$  for all $r$ by subtracting a constant from $u$.

We will now show that $u=const.$ 
We have $|D_r| \leq c$ for some $c \in \R$ independent of $r$ by Lemma~\ref{lem:levelsetSize}. 
By Lemma~\ref{lem:wUpperBound} there exists $C$ such that $w(x,y)< C$ for all $(x,y) \in E(C_r, V\setminus C_r)$.
Thus,
\[
\alpha_r := \langle \Delta 1_{C_r}, u^2 \rangle= \sum_{x \in C_r,y \notin C_r} (u^2(y)-u^2(x))w(x,y)  \leq Cc^2 \max_{y \in D_r} u^2(y).
\]
Let $y \in D_r$ maximizing $|u(y)|$.
As $\max_{D_r} u \geq 0 \geq \min_{D_r} u$, there is $y' \in D_r$ with $u(y)u(y') \leq 0$. As $|D_r|<c$ and as $D_r$ is connected by Lemma~\ref{lem:levelsetConnected}, there exists $z \in D_r$ with $c |\nabla u(z)| \geq \max_{D_r} |u|$.
As $w(x,y) \geq \eps$ for all $x\sim y$, we have
$\Delta u^2  \geq \frac \eps {m} |\nabla u|^2$.
Therefore,
\[
\alpha_{r+1} - \alpha_r =  \langle \Delta 1_{D_r},u^2\rangle
= \langle 1_{D_r}, \Delta  u^2\rangle
 \geq \eps \sum_{x \in D_r} |\nabla u(x)|^2 \geq \eps |\nabla u(z)|^2   \geq \frac \eps {c^2} \max_{D_r} u^2 \geq \frac \eps {Cc^4} \alpha_r.
\]
Thus, $\alpha_r$ is growing at least exponentially in $r$ if $\alpha_r>0$. This however is impossible as $u$ has subexponential growth, and as the number of edges out of $C_r$ is uniformly bounded.
This shows that $\alpha_r \leq 0$ for all $r \in \Z$.
When considering $C'_r = f^{-1}((2r, \infty))$ instead of $C_r$, we get $\alpha_r \geq 0$ by a similar argument.
Hence, $\alpha_r = 0$ implying $\eps \sum_{D_r} |\nabla u(x)|^2 \leq  \alpha_{r+1} - \alpha_r = 0$ for all $r \in \Z$. Thus, $u$ is constant which finishes the proof.
\end{proof}

By Theorem~\ref{thm:H0Characterization}, the dimension of $\R \mathcal H_0 +const$ is two, and thus we obtain the following corollary.

\begin{corollary}\label{cor:dimSubexpGrowth}
Let $G=(V,w,m,d_0)$ be a salami and $x \in V$. Then, 
\[
\dim \left\{ u \in \R^V: \Delta u = 0, \quad  \max_{B_r(x)} |u| = e^{o(r)}, \quad r \to \infty \right\} = 2.
\]
\end{corollary}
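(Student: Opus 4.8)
The plan is to identify the space appearing in the corollary explicitly with $\R\mathcal H_0 + const$, and then to show that this set is a two-dimensional linear space spanned by any fixed $f \in \mathcal H_0$ together with the constant function. Write
\[
U := \left\{u \in \R^V : \Delta u = 0, \ \max_{B_r(x)}|u| = e^{o(r)} \text{ as } r \to \infty\right\}.
\]
First I would argue that $U = \R\mathcal H_0 + const$. The inclusion $U \subseteq \R\mathcal H_0 + const$ is exactly the content of the preceding theorem. For the reverse inclusion, I would observe that any $\lambda g + c$ with $g \in \mathcal H_0$ is harmonic, and since $g \in Lip(1)$ it satisfies $|g(y)| \leq |g(x)| + d_0(x,y)$, so it grows at most linearly and in particular subexponentially; hence $\lambda g + c \in U$. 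I would also note that the growth condition does not depend on the choice of base point, since $B_r(x') \subseteq B_{r+d_0(x,x')}(x)$ forces $\max_{B_r(x')}|u| = e^{o(r)}$ as well, so $U$ is well defined for any $x \in V$.

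Next I would fix one $f \in \mathcal H_0$, which exists because Theorem~\ref{thm:constGradient} together with Lemma~\ref{lem:harmonicEverywhere} produces an element of $\mathcal H(P) \subseteq \mathcal H_0$ for a connected salami partition $P$. The key step is to prove $\R\mathcal H_0 + const = \SPAN\{f,1\}$. The inclusion $\supseteq$ is immediate, since $\lambda f \in \R\mathcal H_0$ and constants are allowed. For $\subseteq$, I would take an arbitrary $g \in \mathcal H_0$; by Theorem~\ref{thm:H0Characterization} either $g - f = const$ or $g + f = const$, so $g = \pm f + const \in \SPAN\{f,1\}$, and consequently $\lambda g + c \in \SPAN\{f,1\}$ for all $\lambda, c \in \R$.

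Finally, $f$ is non-constant because $\nabla_+ f = 1 \neq 0$, so $f$ and the constant function are linearly independent and $\dim \SPAN\{f,1\} = 2$. Chaining the two identifications then gives $\dim U = 2$, as claimed. The only genuine subtlety — which I would treat as the main point of the argument — is that $\R\mathcal H_0$ is a priori merely a union of scalar lines rather than a linear subspace; it is precisely the dichotomy of Theorem~\ref{thm:H0Characterization} that collapses all of $\mathcal H_0$, up to sign and additive constants, onto the single line $\R f$, thereby making $\R\mathcal H_0 + const$ a bona fide two-dimensional space. Everything else is routine bookkeeping.
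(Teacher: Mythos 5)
Your proposal is correct and follows essentially the same route as the paper, which deduces the corollary in one line from the preceding theorem (giving $U \subseteq \R\mathcal H_0 + \mathrm{const}$) together with Theorem~\ref{thm:H0Characterization} (which collapses $\R\mathcal H_0 + \mathrm{const}$ onto the two-dimensional span of one $f \in \mathcal H_0$ and the constants). You merely spell out the details the paper leaves implicit --- the reverse inclusion via the Lipschitz bound, base-point independence, existence and non-constancy of $f$ --- so there is nothing genuinely different here.
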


\subsection{Cheng-Yau gradient estimate}

We assume the graph $G=(V,w,m)$ has bounded geometry, i.e., there exist $0<c<C$ such that $c<m(x)<C$ and $c<w_{xy}<C$ and $\Deg(x)<C$ for all $x\sim y$. 

Since the salami is quasi-isometric to $\R,$
the volume doubling property and the Poincar\'e inequality hold as shown in the following theorem.

\begin{theorem}
Let $G=(V,w,m,d_0)$ be a salami with bounded geometry.  Then there exists $C_1$ such that \begin{equation}\label{eq:VolDoubling}|B_{2R}(x)|\leq C_1|B_{R}(x)|, \quad \forall x\in V, R>0.\end{equation}
    Moreover, there exists a constant $C_2$ such that  for any $x\in V, R>0,$ and any function $f:B_{2R}(x)\to \R,$
  \begin{equation}\label{eq:poincare}\sum_{y\in B_R(x)}|f(y)-f_R|^2\leq C_2R^2\sum_{w,z\in B_{2R}(x)}|f(w)-f(z)|^2,\end{equation} where $f_R=\frac{1}{|B_R(x)|}\sum_{B_R(x)}f.$
\end{theorem}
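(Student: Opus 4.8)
The plan is to reduce both estimates to the essentially one-dimensional geometry coming from Theorem~\ref{thm:quasiIsom}. Fix a connected salami partition $P$ (Lemma~\ref{lem:connectedSalami}) and a harmonic function $h\in\mathcal H(P)$ with $\nabla_+h=\nabla_-h=1$, which exists by Lemma~\ref{lem:harmonicEverywhere} and Theorem~\ref{thm:constGradient}. Bounded geometry provides a uniform lower edge-weight bound, so Lemma~\ref{lem:levelsetSize} yields a constant $L$ with $|h^{-1}([r-1,r))|\le L$ for all $r\in\R$. The first step is a two-sided linear volume bound, uniform in $x$: if $y\in B_R(x)$ then $|h(y)-h(x)|\le d_0(x,y)\le R$ since $h\in Lip(1)$, so $h(y)$ meets at most $2R+1$ unit level sets and $|B_R(x)|\le L(2R+1)$; conversely, iterating $\nabla_\pm h=1$ from $x$ produces for each $n\in\Z$ a vertex $y_n$ with $h(y_n)=h(x)+n$ and $d_0(x,y_n)\le|n|$, so for $|n|\le R$ these distinct vertices lie in $B_R(x)$ and $|B_R(x)|\ge 2\lfloor R\rfloor+1$. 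Hence $R\le|B_R(x)|\le 3LR$ for all $x\in V$ and all $R\ge1$.

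Volume doubling \eqref{eq:VolDoubling} is then immediate: for $R\ge1$ one has $|B_{2R}(x)|\le 3L\cdot 2R=6LR\le 6L\,|B_R(x)|$, while for $R<1$ both balls in question either coincide with $\{x\}$ or are uniformly finite, so \eqref{eq:VolDoubling} holds with $C_1=6L$.

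For the Poincar\'e inequality \eqref{eq:poincare} I use a chaining argument centred at $x$; the case $R<1$ is trivial since then $B_R(x)=\{x\}$ and the left-hand side vanishes, so assume $R\ge1$. Writing $\bar f:=f_R$ and using the variance identity $\sum_{y\in B_R(x)}|f(y)-\bar f|^2=\tfrac{1}{2|B_R(x)|}\sum_{y,y'\in B_R(x)}|f(y)-f(y')|^2$, I join each pair $y,y'$ by the concatenation $\gamma_{y,y'}$ of a geodesic from $y$ to $x$ with a geodesic from $x$ to $y'$. Any point $p$ on a geodesic from $y$ to $x$ satisfies $d_0(p,x)\le d_0(y,x)\le R$, so $\gamma_{y,y'}$ stays inside $B_R(x)\subseteq B_{2R}(x)$ and has length at most $2R$. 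Cauchy--Schwarz gives $|f(y)-f(y')|^2\le 2R\sum_{\{u,v\}\in\gamma_{y,y'}}(f(u)-f(v))^2$, and summing over all pairs, while bounding the number of pairs whose path traverses a fixed edge by $2|B_R(x)|^2$, yields $\sum_{y,y'\in B_R(x)}|f(y)-f(y')|^2\le 4R\,|B_R(x)|^2\,D$ with $D:=\sum_{\substack{u\sim v\\ u,v\in B_R(x)}}(f(u)-f(v))^2$. Dividing by $2|B_R(x)|$ and inserting the upper bound $|B_R(x)|\le 3LR$ leaves $\sum_{B_R(x)}|f-\bar f|^2\le 2R\,|B_R(x)|\,D\le 6L\,R^2\,D$. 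Since every edge inside $B_R(x)$ joins two vertices of $B_{2R}(x)$, $D$ is dominated by $\sum_{w,z\in B_{2R}(x)}|f(w)-f(z)|^2$, which proves \eqref{eq:poincare} with $C_2=6L$.

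The one point demanding care is the power of $R$. The edge-multiplicity bound contributes a factor $|B_R(x)|^2$ and the chaining a factor $R$, which in isolation would give a cubic dependence; it is the variance normalisation $\tfrac{1}{2|B_R(x)|}$ that removes one factor $|B_R(x)|$, after which the linear upper bound $|B_R(x)|\le 3LR$ converts the surviving $R\,|B_R(x)|$ into $R^2$. Thus the sharp quadratic constant rests entirely on the uniform linear volume growth of the salami, that is, on the bounded level sets of Lemma~\ref{lem:levelsetSize}; establishing this two-sided volume estimate uniformly in the centre $x$ is the heart of the argument, and everything else is routine bookkeeping.
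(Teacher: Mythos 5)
Your proof is correct, but it takes a genuinely different and more self-contained route than the paper. The paper's own proof is two lines of citation: volume doubling \eqref{eq:VolDoubling} is deduced from Theorem~\ref{thm:quasiIsom} together with bounded geometry, and the Poincar\'e inequality \eqref{eq:poincare} is imported from \cite[Proposition~3.33]{barlow2017random}, i.e., from the known stability of doubling and Poincar\'e under quasi-isometries for graphs of bounded geometry. You bypass that machinery: from $h\in\mathcal H(P)$ with $\nabla_\pm h=1$ (Theorem~\ref{thm:constGradient}, existence via Lemma~\ref{lem:harmonicEverywhere}) and the uniform level-set bound of Lemma~\ref{lem:levelsetSize} (available because bounded geometry gives $w\geq c$), you extract a two-sided linear volume estimate $R\leq |B_R(x)|\leq 4LR$ uniformly in the center $x$, which gives doubling immediately, and you then prove the Poincar\'e inequality directly by the variance identity plus chaining through the center, with the trivial edge-multiplicity bound $2|B_R(x)|^2$; as you correctly isolate, the normalization $\tfrac{1}{2|B_R(x)|}$ combined with the linear volume bound is what produces the sharp $R^2$. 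Your argument in fact proves the stronger form with the Dirichlet-type sum $\sum_{u\sim v,\ u,v\in B_{2R}(x)}|f(u)-f(v)|^2$ on the right-hand side, of which the stated all-pairs sum is an upper bound, and for \eqref{eq:poincare} it uses only the lower edge-weight bound rather than full bounded geometry. What the paper's route buys is brevity and access to the full weighted theory in Barlow's book; what yours buys is an elementary, self-contained proof with explicit constants. Two cosmetic bookkeeping points, neither a gap: covering $[h(x)-R,h(x)+R]$ requires $\lceil 2R\rceil+1\leq 2R+2$ unit level sets rather than $2R+1$, and in the case $\tfrac12\leq R<1$ one has $B_{2R}(x)=B_1(x)$, so the doubling constant must also absorb the uniform bound $|B_1(x)|\leq 1+C^2/c$ furnished by bounded geometry, i.e., $C_1=\max\{8L,\,1+C^2/c\}$ rather than $6L$.
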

\begin{remark} In fact, $C^{-1}R\leq |B_R(x)|\leq CR$ for any $x\in V, R\geq 1.$
\end{remark}
\begin{proof}
Inequality \eqref{eq:VolDoubling} follows from Theorem~\ref{thm:quasiIsom} and bounded geometry.
Inequality \eqref{eq:poincare} follows from \cite[Proposition~3.33]{barlow2017random} and Theorem~\ref{thm:quasiIsom}. This finishes the proof.
\end{proof}

Indeed, volume doubling and Poincar\'e inequality are also satisfied when assuming a strong version of non-negative Bakry-\'Emery curvature  \cite{horn2014volume}, and volume doubling also holds true when assuming the usual Bakry-\'Emery curvature curvature assumption \cite{munch2019li}.

It is well-known that the combination of the volume doubling property and the Poincar\'e inequality is equivalent to the parabolic Harnack inequality, see e.g. \cite{delmotte1999parabolic}.
 For the elliptic version, we have the following Harnack inequality as a consequence of \cite[Theorem~1.7]{delmotte1999parabolic} in combination with \eqref{eq:VolDoubling} and \eqref{eq:poincare}.
\begin{theorem}\label{thm:Harnack}
Let $G=(V,w,m,d_0)$ be a salami with bounded geometry. Then there exists $C$ such that for any positive harmonic function $f$ on $B_{2R}(x),$ $R\geq 1,$ we have $$\sup_{B_R(x)}f\leq C\inf_{B_R(x)}f.$$
\end{theorem}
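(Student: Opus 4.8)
The plan is to obtain the elliptic Harnack inequality as a consequence of the \emph{parabolic} Harnack inequality, which itself follows from Delmotte's equivalence between the parabolic Harnack inequality and the conjunction of volume doubling and the Poincar\'e inequality. The two analytic ingredients are already available: the previous theorem supplies volume doubling \eqref{eq:VolDoubling} and the scaled $L^2$-Poincar\'e inequality \eqref{eq:poincare}, both with constants depending only on the bounded-geometry data. So the core of the argument is an application of \cite[Theorem~1.7]{delmotte1999parabolic}, once its structural hypotheses are checked.

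First I would verify that a salami with bounded geometry fits Delmotte's framework. The relevant random walk has one-step transition probabilities $p(x,y) = w(x,y)/\sum_z w(x,z) = w(x,y)/(\Deg(x)m(x))$. Bounded geometry gives $c<w(x,y)<C$, $c<m(x)<C$ and $\Deg(x)<C$, so $\sum_z w(x,z) = \Deg(x)m(x) < C^2$, and since every present edge has weight at least $c$, each vertex has at most $C^2/c$ neighbours; hence $p(x,y) \geq c/C^2 > 0$ on edges. This uniform lower bound on transition probabilities is exactly the regularity condition ($p_0$-type) required by Delmotte. With this in hand, feeding \eqref{eq:VolDoubling} and \eqref{eq:poincare} into \cite[Theorem~1.7]{delmotte1999parabolic} yields the parabolic Harnack inequality on $G$ with a constant determined only by the doubling and Poincar\'e constants. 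The elliptic statement is then the special case applied to a positive harmonic $f$ on $B_{2R}(x)$: such an $f$ is a stationary (time-independent) solution of the heat equation, so the parabolic inequality applied to the caloric function $(t,y)\mapsto f(y)$ collapses to $\sup_{B_R(x)} f \leq C\inf_{B_R(x)} f$.

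The step I expect to require the most care is the parity issue in the discrete-time setting: on a (nearly) bipartite graph the simple discrete-time walk is periodic and can violate the parabolic Harnack inequality, which is why Delmotte's setup usually demands a uniform \emph{holding} probability $p(x,x)\geq p_0>0$. To sidestep this I would work with the continuous-time walk generated by $\Delta$, for which positive harmonic functions are precisely the stationary solutions and no laziness assumption is needed, or equivalently pass to the lazy walk $\tfrac12(I+P)$; neither modification changes the space of harmonic functions, nor does it affect \eqref{eq:VolDoubling} and \eqref{eq:poincare} beyond absolute constants. Once this regularization is fixed, the remainder is a direct invocation of the cited results, so no further genuinely new estimate is needed.
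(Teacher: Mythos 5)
Your proposal follows essentially the same route as the paper: the paper likewise deduces Theorem~\ref{thm:Harnack} directly from \cite[Theorem~1.7]{delmotte1999parabolic} combined with the volume doubling property \eqref{eq:VolDoubling} and the Poincar\'e inequality \eqref{eq:poincare}, both supplied by the preceding theorem via quasi-isometry to the line and bounded geometry. Your additional verification of the uniform lower bound on transition probabilities and the laziness/parity regularization is a correct and welcome elaboration of hypotheses the paper leaves implicit, but it does not change the argument.
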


By combining the maximum principle for the gradient (Corollary~\ref{cor:maxPrinciple}) with the elliptic Harnack inequality above and Caccioppoli inequality we now prove the following Cheng-Yau type gradient estimate for salamis.

\begin{theorem}\label{thm:ChengYau}
 Let $G=(V,w,m,d_0)$ be a salami with bounded geometry. Then there exists a constant $C$ such that for any positive harmonic function $u$ on $B_R(x_0),$ 
$$|\nabla u|(x_0)\leq \frac{C}{R} u(x_0).$$
\end{theorem}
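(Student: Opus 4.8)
The plan is to combine the three ingredients the paper has just assembled: the elliptic Harnack inequality (Theorem~\ref{thm:Harnack}), the maximum principle for the gradient of harmonic functions (Corollary~\ref{cor:maxPrinciple}), and a Caccioppoli-type energy estimate, all under the bounded geometry assumption. The classical Cheng-Yau strategy passes to $\log u$, but on a graph $\log u$ need not be harmonic, so I would instead work directly with $u$ and exploit that bounded geometry makes $|\nabla u|$ comparable to the full local oscillation of $u$. The key reduction is that, by Corollary~\ref{cor:maxPrinciple}, for a function harmonic on a ball the maximal gradient on an inner ball is controlled by the gradient on a sphere, which lets me estimate $|\nabla u|(x_0)$ by the gradient somewhere out near radius $R$ rather than pinning it at the center.

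First I would fix $x_0$ and a positive harmonic $u$ on $B_R(x_0)$, and set $M := \sup_{B_{R/2}(x_0)} u$ and $\mu := \inf_{B_{R/2}(x_0)} u$. By Theorem~\ref{thm:Harnack} applied on an appropriate ball of radius comparable to $R$, I get $M \leq C\mu \leq C\, u(x_0)$, so it suffices to bound $|\nabla u|(x_0)$ by $\tfrac{C}{R}M$. Next I would run a Caccioppoli inequality: testing harmonicity of $u$ against $\varphi^2 u$ for a cutoff $\varphi$ supported in $B_R(x_0)$ and equal to $1$ on $B_{R/2}(x_0)$, with $|\nabla\varphi| \lesssim 1/R$, yields under bounded geometry an energy bound of the form $\sum_{x\in B_{R/2}(x_0)} |\nabla u|^2(x)\, m(x) \leq \tfrac{C}{R^2} M^2\, |B_R(x_0)|$. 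Since bounded geometry and Theorem~\ref{thm:quasiIsom} give $|B_R(x_0)|\le CR$, the total energy on $B_{R/2}(x_0)$ is at most $\tfrac{C}{R}M^2$.

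From the energy bound I would extract a single good vertex: there exists $z\in B_{R/2}(x_0)$ with $|\nabla u|^2(z) \leq \tfrac{C}{R^2}M^2$, hence $|\nabla u|(z)\le \tfrac{C}{R}M$. The remaining task is to transport this pointwise gradient bound from the unknown vertex $z$ back to $x_0$. This is exactly where Corollary~\ref{cor:maxPrinciple} enters: since $u$ is harmonic on $B_R(x_0)$, the maximum of $|\nabla u|$ over any inner ball is attained on its boundary sphere, so $|\nabla u|$ has no interior maximum and its value at $x_0$ is dominated by its values further out. I would use this monotone-in-radius behavior of $\max_{S_\rho(x_0)}|\nabla u|$ to conclude $|\nabla u|(x_0) \leq \max_{B_{R/2}(x_0)} |\nabla u|$, and then control the latter by the average gradient (hence by $z$) again via the energy estimate rather than a single vertex.

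I expect the main obstacle to be the transport step, i.e. converting the averaged $L^2$ energy bound into the genuinely pointwise estimate at $x_0$. The gradient maximum principle only says the sup over an inner region is controlled by a boundary sphere, whereas Caccioppoli naturally produces an $L^2$-average; reconciling these requires care, most cleanly by noting that on the sphere $S_\rho(x_0)$ for some $\rho\le R/2$ the number of vertices is uniformly bounded (level sets have bounded cardinality, Lemma~\ref{lem:levelsetSize} combined with quasi-isometry to the line), so on that sphere $L^\infty$ and $L^2$ norms of $\nabla u$ are comparable up to a constant. Choosing such a $\rho$ where the spherical energy is below the average, applying Corollary~\ref{cor:maxPrinciple} to pull $\max_{B_\rho(x_0)}|\nabla u|$ down to $\max_{S_\rho(x_0)}|\nabla u|$, and finally invoking Harnack to replace $M$ by $u(x_0)$, should yield $|\nabla u|(x_0)\le \tfrac{C}{R}u(x_0)$ as claimed.
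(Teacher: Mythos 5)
Your proposal is correct and takes essentially the same route as the paper: elliptic Harnack plus a Caccioppoli energy estimate plus the gradient maximum principle (Corollary~\ref{cor:maxPrinciple}), with the $L^2$ bound converted to a pointwise bound at $x_0$ by exploiting $\sim R$ disjoint shells of uniformly bounded size, exactly the mechanism the paper uses by summing $|\nabla u|^2(x_0)\leq \max_{A_r}|\nabla u|^2 \leq \sum_{A_r}|\nabla u|^2$ over the level-set annuli $A_r$ of some $f\in\mathcal H(P)$. Your only deviations are cosmetic (pigeonholing one good combinatorial sphere $S_\rho(x_0)$ instead of summing over level-set annuli, plus routine radius bookkeeping for Harnack, which the paper handles by assuming $u$ harmonic on $B_{4cR}(x_0)$), and your final good-sphere argument correctly supersedes the earlier single-vertex detour, which by itself could not transport a bound at an unknown $z$ back to $x_0$.
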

\begin{proof} Let $f \in \mathcal H(P)$ for a connected salami partition $P.$ W.l.o.g., assume that $f(x_0)=0.$ For any $r\geq 1,$ set
$$C_r:=f^{-1}((-r,r)),\quad A_r:=f^{-1}((-r-1,-r]\cup [r,r+1)).$$ By Lemma~\ref{lem:levelsetConnected}, 
\[|A_r| \leq C, \quad \forall r\geq 1.\] By the quasi-isometry property, there exists a constant $c$ such that
$$C_r\subset B_{cr}(x_0),\quad \forall r\geq 1.$$
For the subset $C_r,$ the boundary of $C_r$ is contained in $A_r.$ 

W.l.o.g., let $u$ be a positive harmonic function on $B_{4cR}(x_0)$ for $R\in \N.$ By the maximum principle for $|\nabla u| $ from Corollary~\ref{cor:maxPrinciple},
$$|\nabla u|^2(x_0)\leq \max_{A_r}|\nabla u|^2,\quad \forall r\geq 1.$$ Hence,
$$|\nabla u|^2(x_0)\leq \sum_{A_r}|\nabla u|^2.$$ 
We remark that in the summations, we always omit edge weight and vertex measure as we have bounded geometry. 
Summing over $r=1,2,\cdots, R,$ for $R\in \N,$ we have
$$R|\nabla u|^2(x_0)\leq \sum_{r=1}^R\sum_{A_r}|\nabla u|^2\leq \sum_{C_R}|\nabla u|^2\leq \sum_{B_{cR}(x_0)}|\nabla u|^2.$$ By the Caccioppoli inequality (see \cite[Theorem~3.1]{hua2014q}),
$$\sum_{B_{cR}(x_0)}|\nabla u|^2\leq \frac{C}{R^2}\sum_{B_{2cR}(x_0)}u^2.$$ By the Harnack inequality (Theorem~\ref{thm:Harnack}) on $B_{4cR}(x_0),$
for any $x\in B_{2cR}(x_0),$ $$u(x)\leq C u(x_0).$$ Hence,
\begin{eqnarray*} R|\nabla u|^2(x_0)&\leq&\frac{C}{R^2}\sum_{B_{2cR}(x_0)}u^2\leq\frac{CR}{R^2}u^2(x_0).\\
\end{eqnarray*} This proves the result.
\end{proof}

\section{Examples} \label{sec:Examples}

In this section, we give examples justifying our assumptions of infinite measure of the ends, and of lower bounded edge weights in some applications. 
Moreover, we give examples of salamis with unbounded edge weights and show why the minimal cut approach fails to prove the main theorem.
Finally, we show that every birth death chain with bounded edge weight can be equipped with a path distance so that the curvature is non-negative.

\subsection{Why infinite measure?}

We now give an example showing that requiring infinite measure on two ends is necessary to conclude that there are at most two 
ends.

\begin{example}\label{ex:threeEnds}
The following graph has non-negative Ricci curvature, three ends and infinite measure.
The graph $G=(V,w,m,d_0)$ is constructed by gluing together three infinite birth death chains $G_i=(V_i=\N_0,w_i,m_i)$ by identifying their roots $0 \in \N_0$.
The chain $G_1$ is the uniform birth death chain given by $w_1(n,n+1) = 1$, and $m_1(n)=1$ for all $n \in \N_0$.
The other two chains $G_2$ and $G_3$ are set to be equal and given by
$w_2(n,n+1) = \frac 1 2 \cdot 3^{-n}$ and $m_2(n) = 3^{-n}$ for $n \in \N_0$. If $|n-m|\neq 1$, then $w_i(n,m) := 0$ for $i=1,2,3$.
Clearly, $G$ has three ends and infinite measure. 
\end{example}
We now show that $G$ has curvature $0$ everywhere.
\begin{lemma}
The graph $G$ from Example~\ref{ex:threeEnds} has Ollivier curvature $0$ on all edges.
\end{lemma}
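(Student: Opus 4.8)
The plan is to exploit the fact that $G$ is a tree and to apply the explicit formula from Lemma~\ref{lem:CurvatureTrees}. First I would verify that $G$ is a tree in the sense of that lemma: it is connected because all three arms share the root $0$, and deleting any vertex disconnects it (deleting an interior vertex of an arm cuts that arm in two, while deleting $0$ separates the three arms). Since we work with the combinatorial distance $d_0$, Lemma~\ref{lem:CurvatureTrees} then yields, for every edge $x\sim y$,
\[
\kappa(x,y) = 2\bigl(q(x,y)+q(y,x)\bigr) - \Deg(x) - \Deg(y), \qquad q(x,y) = \frac{w(x,y)}{m(x)}.
\]
The one point requiring care in setting up the computation is the measure of the glued root: since all three chains assign $m_i(0)=3^0=1$ to their root, the identification gives $m(0)=1$, and this value is exactly what will make the two geometric root edges flat.

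The core observation is that every vertex of $G$ has \emph{weighted} degree $\Deg(x)=2$, even though the root has three neighbors. For an interior vertex $n\geq 1$ of the uniform arm $G_1$ this is immediate, as both incident weights and the measure equal $1$. For an interior vertex $n\geq 1$ of $G_2$ (and $G_3$) one checks $w_2(n-1,n)+w_2(n,n+1)=\tfrac12 3^{-(n-1)}+\tfrac12 3^{-n}=2\cdot 3^{-n}=2\,m_2(n)$, so again $\Deg(n)=2$. At the root, $\sum_y w(0,y)=1+\tfrac12+\tfrac12=2=2\,m(0)$. Hence $\Deg\equiv 2$, and the formula collapses to $\kappa(x,y)=2\bigl(q(x,y)+q(y,x)\bigr)-4$, so that it suffices to verify $q(x,y)+q(y,x)=2$ on every edge.

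Finally I would check this identity edge by edge, using the symmetry $G_2=G_3$ so that only $G_1$ and $G_2$ must be treated. On an interior edge of $G_1$ one has $q(n,n+1)+q(n+1,n)=1+1=2$; on an interior edge of $G_2$ one has $\tfrac12+\tfrac32=2$; on the root edge of $G_1$ one has $q(0,1)+q(1,0)=1+1=2$; and on the root edge of $G_2$ one has $q(0,1)+q(1,0)=\tfrac12+\tfrac32=2$, where $m(0)=1$ is used. In every case $\kappa(x,y)=0$, proving the claim. I do not expect a genuine obstacle here, since the argument is a direct computation; the only subtlety to watch is the convention $m(0)=1$ for the glued root, without which the two geometric root edges would fail to be flat.
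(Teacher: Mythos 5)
Your proof is correct and takes essentially the same approach as the paper: both invoke the tree formula of Lemma~\ref{lem:CurvatureTrees} and reduce the claim to checking $\Deg \equiv 2 = q(x,y)+q(y,x)$ on every edge. Your explicit verification that $G$ satisfies the lemma's tree definition and your remark on the convention $m(0)=1$ at the glued root merely spell out details the paper leaves implicit.
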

\begin{proof}
We recall $q(x,y) = \frac{w(x,y)}{m(x)}$ for $x,y \in V$.
As $G$ is a tree, Lemma~\ref{lem:CurvatureTrees} shows the curvature $\kappa(x,y)$ for $x \sim y$ is given by
\[
\kappa(x,y) = 2(q(x,y) + q(y,x)) - \Deg(x)-\Deg(y).
\]

We notice $q(n,n+1)=q(n,n-1)=1$ on $G_1$, and $q(n,n+1)=\frac 1 2$ and $q(n,n-1)=\frac 3 2$ on $G_2$ and $G_3$
Hence on $G$, we have $\Deg \equiv 2 = q(x,y)+q(y,x)$ for all $x \sim y$. Therefore, $\kappa(x,y)=0$ for all $x \sim y$.
This finishes the proof.
\end{proof}

\subsection{Why edge weight bounded from below?}

We now give an example showing that the uniform size bound of the level sets of functions in $\mathcal H_0$ from Lemma~\ref{lem:levelsetSize} is wrong if we omit the lower bound on the edge weight.
The example is a salami on two opposite quadrants of $\Z^2$, see Figure~\ref{fig1}. We first calculate the curvature on a weighted lattice. For convenience, we use the notation of Gaussian integers.

\begin{lemma}\label{lem:curvatureLattice}
Let $V \subseteq \Z^2\subset \C$ and $G=(V,w,m,d_0)$ be a graph such that $d_0(x,y) = \|x-y\|_1$ for all $x,y \in V$. 
Let $x \in V$ and assume $y = x+1 \in V$.
Then,
\begin{align*}
\kappa(x,y) = & \quad q(x,y) + q(y,x) - q(x,x-1)-q(y,y+1)  \\& - |q(x,x+i)-q(y,y+i)| - |q(x,x-i)-q(y,y-i)|.
\end{align*}
\end{lemma}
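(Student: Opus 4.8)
The plan is to compute the Ollivier curvature $\kappa(x,y)$ for the horizontal edge $x \sim y=x+1$ directly from the definition, exploiting that on a subset of $\Z^2$ with the $\ell_1$ combinatorial distance the neighbors of $x$ and $y$ pair up naturally. The definition states $\kappa(x,y) = \inf_{f} (\Delta f(x) - \Delta f(y))$ over $f \in Lip(1)$ with $f(y)-f(x)=1$. I would split the argument into the usual two inequalities, ``$\geq$'' and ``$\leq$''. The key structural observation is that any $f \in Lip(1)$ with $f(y)-f(x)=1$ is forced on the two vertices $x-1$ and $y+1$ lying on the common axis through $x,y$: since $f(y)-f(x)=d_0(x,y)=1$ and $f$ is $1$-Lipschitz, the values along the geodesic $x-1,x,y,y+1$ must increase by exactly one at each step, so $f(x-1)=f(x)-1$ and $f(y+1)=f(y)+1$. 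This rigidity pins down the contribution of the horizontal neighbors.

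For the remaining (vertical) neighbors $x\pm i$ and $y \pm i$, I would observe that the distance constraints only force $|f(x\pm i)-f(x)|\le 1$ and $|f(y\pm i)-f(y)|\le 1$ together with $|f(x+i)-f(y+i)|\le 1$ and $|f(x-i)-f(y-i)|\le 1$ (these last bounds because $x+i\sim y+i$ etc.\ in the lattice). The infimum over $f$ decouples into independent optimizations over the ``top'' pair $(x+i,y+i)$ and the ``bottom'' pair $(x-i,y-i)$. Writing out $\Delta f(x)-\Delta f(y)$ and grouping terms, the horizontal neighbors contribute the fixed amount $q(x,y)+q(y,x)-q(x,x-1)-q(y,y+1)$, while each vertical pair contributes a term one minimizes by choosing $f$ to make the relevant gradient as favorable as possible; the minimal value of each such contribution turns out to be $-|q(x,x+i)-q(y,y+i)|$ (and analogously for the bottom pair). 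The matching upper bound ``$\leq$'' is obtained by exhibiting an explicit $1$-Lipschitz test function realizing these optimal values on each pair, which is possible because the pairs are independent and the sign of $q(x,x+i)-q(y,y+i)$ dictates whether the optimizer should push $f(x+i)$ up or down relative to $f(y+i)$.

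I expect the main obstacle to be verifying carefully that the test function assembled from the pairwise-optimal choices is globally $1$-Lipschitz on all of $V$, not merely on the neighborhood of the edge. In the combinatorial $\ell_1$ setting one must check the Lipschitz condition across \emph{all} pairs of adjacent vertices among $\{x,y,x\pm1,y\pm1,x\pm i, y\pm i\}$ and confirm that prescribing the values only on $B_1(x)\cup B_1(y)$ extends to an element of $Lip(1)$ on the whole graph; here the finite-range nature of the Laplacian and the fact that $\kappa$ depends only on local data (values within distance one of $x$ and $y$) keep this manageable, since $\Delta f(x)$ and $\Delta f(y)$ only see $B_1(x)\cup B_1(y)$ and one may extend $f$ arbitrarily $1$-Lipschitzly outside (e.g.\ by a minimal/maximal Lipschitz extension as in Lemma~\ref{lem:Sproperties}). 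A secondary bookkeeping point is that the formula is stated without the diagonal neighbors $x\pm(1\pm i)$ entering, which is correct precisely because those vertices are at $\ell_1$-distance two from $x$ and hence are not neighbors; I would note this explicitly so the reader sees why only the four axis-aligned neighbors of each endpoint appear.
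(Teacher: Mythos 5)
Your overall architecture --- computing $\kappa(x,y)$ directly from the definition, isolating a fixed contribution from the horizontal neighbors $x-1$ and $y+1$, decoupling the remaining optimization into the independent pairs $(x+i,y+i)$ and $(x-i,y-i)$, and proving sharpness by an explicit test function --- is exactly the paper's proof. However, your key ``rigidity'' claim is false: from $f(y)-f(x)=1$ and $f\in Lip(1)$ you cannot conclude $f(x-1)=f(x)-1$ and $f(y+1)=f(y)+1$. The Lipschitz condition only yields $f(y)-f(x-1)\leq d_0(x-1,y)=2$, i.e.\ the one-sided bound $f(x-1)\geq f(x)-1$; for instance $f(x)=0$, $f(y)=1$, $f(x-1)=1$ violates nothing. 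Rigidity of the kind you invoke would require $f$ to be extremal along the whole geodesic $x-1,x,y,y+1$, i.e.\ $f(y+1)-f(x-1)=3$, which is not part of the hypothesis $f(y)-f(x)=1$. So the step ``the values must increase by exactly one at each step'' fails as a statement about every competitor $f$.

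Fortunately the error lies only in the justification, and the repair is precisely what the paper does: since $q(x,x-1),q(y,y+1)\geq 0$, the one-sided inequalities $f(x-1)\geq f(x)-1$ and $f(y+1)\leq f(y)+1$ already give $\Delta f(x)\geq q(x,y)-q(x,x-1)+q(x,x+i)f(x+i)+q(x,x-i)f(x-i)$ and the matching upper bound for $\Delta f(y)$; equality at $x-1$ and $y+1$ is attained by the optimizer but not forced on every admissible $f$. Note the paper handles the vertical pairs in the same one-sided spirit: rather than solving a two-variable constrained minimization per pair as you propose (which does give $-\lvert q(x,x\pm i)-q(y,y\pm i)\rvert$, as one checks), it uses $f(y\pm i)-1\leq f(x\pm i)$ (the adjacency constraint across the pair, valid with the nonnegative coefficient $q(y,y\pm i)$) to collapse each pair to the single variable $f(x\pm i)\in[-1,1]$, from which the absolute-value terms drop out immediately. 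For sharpness the paper only remarks that all inequalities are simultaneously attained; your plan of prescribing the extremal values on $B_1(x)\cup B_1(y)$, checking the finitely many Lipschitz constraints there, and extending by a McShane-type extremal Lipschitz extension is the correct way to make that remark precise, and it suffices because $\Delta f(x)-\Delta f(y)$ depends only on $f|_{B_1(x)\cup B_1(y)}$. Your observation about absent lattice points (terms with $q=0$ and relaxed constraints) and about diagonal vertices being at $\ell_1$-distance two is also consistent with the statement.
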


\begin{proof}
We first prove $``\geq"$.
Let $f \in Lip(1)$ with $f(y)-f(x)=1$. W.l.o.g., $f(y)=1$ and $f(x)=0$.
Then,
\begin{align*}
\Delta f(x) &= q(x,y) + q(x,x-1)f(x-1) + q(x,x+i)f(x+i) + q(x,x-i)f(x-i) \\&
\geq  q(x,y) - q(x,x-1) + q(x,x+i)f(x+i) + q(x,x-i)f(x-i)
\end{align*}
and similarly,
\begin{align*}
\Delta f(y) &\leq -q(y,x) + q(y,y+1) + q(y,y+i) (f(y+i)-1) + q(y,y-i)(f(y-i)-1) \\
&\leq  -q(y,x) + q(y,y+1) + q(y,y+i) f(x+i) + q(y,y-i)f(x-i).
\end{align*}
Thus,
\begin{align*}
\Delta f(x)-\Delta f(y) \geq & \quad q(x,y) + q(y,x) - q(x,x-1)-q(y,y+1)  \\&
+f(x+i)(q(x,x+i) - q(y,y+i)) + f(x-i)(q(x,x-i)-q(y,y-i)) \\
 \geq  &
\quad q(x,y) + q(y,x) - q(x,x-1)-q(y,y+1)  \\& - |q(x,x+i)-q(y,y+i)| - |q(x,x-i)-q(y,y-i)|
\end{align*}
where the last inequality follows as $f(x\pm i) \in [-1,1]$.
This proves $``\geq "$. As all inequalities are sharp with an appropriate choice of $f$, we also get $``\leq"$.
\end{proof}

We now provide a salami with no uniform bound on the level set size of functions in $\mathcal H_0$.
The example is constructed by gluing two identical parts together at their base vertex. Each part is a kind of twisted cartesian product of two birth death chains.

\begin{example}\label{ex:edgeweightnotlowerbounded}
Let $X=(\N_0,w_X,m_X,d_0)$ be the birth death chain on $\N_0$ given by $w_X(n,n+1)=2^n,$  and $m_X(n)=2^{n-1}\vee 1$ for all $n\in \N_0.$ Hence $q_X(n,n-1)=1$ for $n \geq 1$ and $q_X(n,n+1) = 1 + 1_{n\geq 1}$ for $n\geq 0$.
By Lemma~\ref{lem:CurvatureTrees}, we have $\kappa_X(n,n+1)=0$ for all $n\geq 0$.

Let  $Y=(\N_0,w_Y,m_Y,d_0)$ be the birth death chain on $\N_0$ given by $w_Y(n,n+1)=2^{-n} = m_Y(n)$  for all $n\in \N_0.$ Hence $q_Y(n,n-1)=2$ for $n \geq 1$ and $q_Y(n,n+1) = 1$ for $n\geq 0$.
By Lemma~\ref{lem:CurvatureTrees}, we have $\kappa_Y(n,n+1)=2\cdot1_{n=0}$ for all $n\geq 0$.

Let $G_0:= (V_0=\N_0^2,w_0,m_0) := X \times Y$, i.e., $$w_0((x_1,y_1),(x_2,y_2)) = w_X(x_1,x_2)1_{y_1=y_2}m_Y(y_1) + w_Y(y_1,y_2)1_{x_1=x_2}m_X(x_1)$$ and $m_0(x_1,y_1) = m_X(x_1)m_Y(y_1)$ for all $x_1,y_1,x_2,y_2 \in \N_0$.
Hence, 
\[
q_0((x_1,y_1),(x_2,y_2)) = q_X(x_1,x_2)1_{y_1=y_2} + q_Y(y_1,y_2)1_{x_1=x_2}.
\]
Let $\phi:\N_0^2 \to \N_0^2, (x,y) \mapsto (\min(x,y),\max(x,y))$.
Let $G=(V,w,m,d_0)$ be given by $V=\N_0^2,$ and
$m(x,y):=m_0(\phi(x,y))$, and 
\[w((x_1,y_1),(x_2,y_2)) := w_0(\phi(x_1,y_1),\phi(x_2,y_2)) \cdot 1_{(x_1-y_1)(x_2-y_2) \geq 0}.\]
We notice that $(x_1,y_1) \sim (x_2,y_2)$ in $G$ if and only if $(x_1,y_1) \sim (x_2,y_2)$ in $G_0$.
We now claim that $\kappa \geq 0$ on $G$ with the combinatorial distance $d=d_0$.
We notice that $G$ is invariant under interchanging the role of $x$ and $y$.
Let $p_1=(x_1,y_1) \sim p_2 = (x_2,y_2)$. We aim to show $\kappa(p_1,p_2) \geq 0$. W.l.o.g., $y_1 \geq x_1$ and $y_2 \geq x_2$, and $y_2 \geq y_1$ and $x_2 \geq x_1$.
We first assume $(x_1,y_1) \neq 0$.

Case 1: $x_1=x_2=x$ and $y_2= y_1+1$.
If $x_1 < y_1$, then $G$ is isomorphic to $G_0$ on $B_1(p_1) \cup B_1(p_2)$ giving $\kappa(p_1,p_2) \geq 0$.
If $x= x_1 = y_1$, then let 
\begin{align*}
p_0&:=(x,x - 1), \\
p_3&:=(x,x + 2), \\
r_i &:=(x + 1, y_i), \\
s_i&:= (x-1,y_i),\quad i=1,2.
\end{align*}
We observe
\begin{align*}
q(p_1,p_0) &=q_X(x,x-1)= 1,   \\
q(p_1,p_2) &= q_Y(x,x+1)=1,   \\
q(p_1,r_1) &= q_Y(x,x+1)=1,   \\
q(p_1,s_1) &= q_X(x,x-1) = 1,   \\
q(p_2,p_1) &= q_Y(x+1,x) =2,    \\
q(p_2,p_3) &= q_Y(x+1,x+2)= 1,   \\
q(p_2,r_2) &=  q_X(x,x+1) = 2,   \\
q(p_2,s_2) &= q_X(x,x-1)  = 1.
\end{align*}
We give a transport plan $\rho:B_1(p_1) \times B_1(p_2) \to [0,\infty)$ by 
\[
\rho(r_1,r_2) = \rho(s_1,s_2) = \rho(p_0,p_1) = \rho(p_2,p_3) = \rho(p_1,p_1) = \rho(p_1,r_2) = 1
\]
and $\rho(p,q)=0$ otherwise.
One can easily check that $\sum_{v,w} (1-d(v,w))\rho(v,w) = 0$
which implies $\kappa(p_1,p_2) \geq 0$ by \cite[Proposition~2.4]{munch2017ollivier}. Alternatively, one can also show $\kappa(p_1,p_2)= 0$ via Lemma~\ref{lem:curvatureLattice}.

Case 2: $x_2=x_1+1$ and $y_2= y_1 = y$.
Similar to the first case, we can assume $y=x_2$.
This however means considering the edge $(y-1,y) \sim (y,y)$ for which the curvature calculation is similar to the first case.

We finally consider $\kappa((0,0),(0,1))$, and the curvature calculation is similar to above with the following exceptions. First, $q_X(x,x+1)$ is now one and not two. Hence $(p_1,r_2)$ can be omitted from the transport plan, increasing its cost by one. Second, there is no $p_0$ anymore meaning, we can now take $\rho(p_1,p_1)=2$, also increasing the transportation cost by one. Third, there is no $s_i$ anymore, and omitting does not impact the transport cost.
Hence $\kappa((0,0),(0,1))=2$.

We now construct $\widetilde G$ by gluing together
two copies of $G$ by identifying their vertex $(0,0)$.
The curvature stays same except at the edges containing $(0,0)$. 
We notice that the curvature can decrease at most by $\Deg(0,0)$ w.r.t. $G$ when gluing.
As the curvature at the corner equals the vertex degree, we see that $\kappa \geq 0$ also after gluing.
Moreover, $m(n,n) = \frac 1 2$ for all $n \in \Z \setminus \{0\}$ showing that both ends have infinite measure. Hence, $\widetilde G$ is a salami.

We construct $f \in \mathcal H_0$ on one part of $\widetilde G$ via $f(x,y)=x+y$, and on the other part as $f(x,y)=-x-y$. It is easy to check that $f \in \mathcal H_0$ and that $|f^{-1}([n,n+1))|  \to \infty$ as $n \to \infty$. This gives the counterexample to Lemma~\ref{lem:levelsetSize}
 when omitting the lower bound on the the edge weight.
\end{example}

\begin{figure}[htbp]
 \begin{center}
   \begin{tikzpicture}
    \node at (0,0){\includegraphics[width=0.4\linewidth]{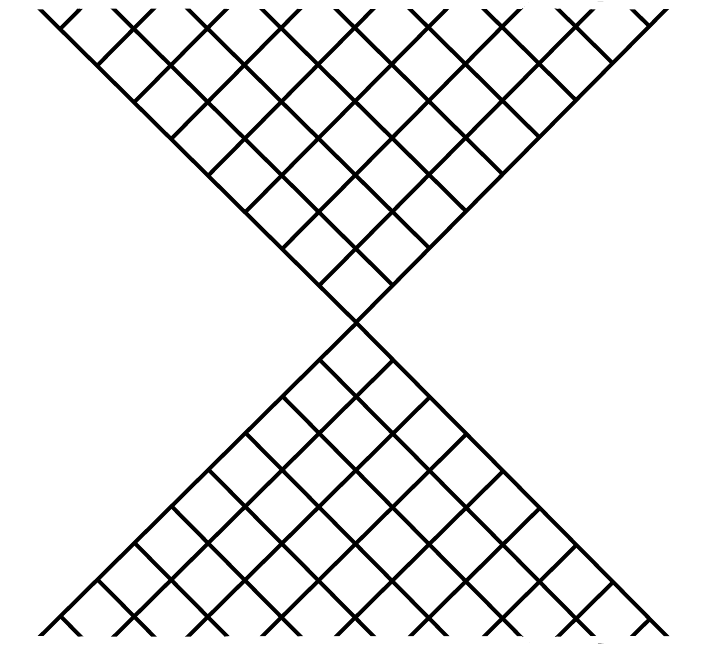}};
    \node at (2.1,   -1.4){\Large $G$};
    \node at (2.1,   1.4){\Large $G$};
   \end{tikzpicture}
  \caption{The figure for Example~\ref{ex:edgeweightnotlowerbounded} shows a salami without lower bound on the edge weight.}\label{fig1}
 \end{center}
\end{figure}

\subsection{Why no edge weight upper bound on all edges?}
In Lemma~\ref{lem:wUpperBound} it is shown that for salamis, $w(x,y)$ is globally upper bounded as soon as $f(y) \neq f(x)$ for some $f \in \mathcal H_0$. We now give an example showing that we do not get an upper bound on $w(x,y)$ for all $x,y$.

\begin{example}\label{ex:noGlobalEdgeWeightBound}
Let $w : \Z \to [0,\infty)$.
Let $G=(V,w,m,d_0)$ be given by
\[
V := \{(x,y) \in \Z^2: |x-y| \leq 1\}
\]
and $m(v)=1$ for all $v \in V$ and 
\[
w((x_1,y_1),(x_2,y_2)) := \begin{cases}
1&: |x_1-x_2| + |y_1-y_2| = 1,\\
w(n)&: (x_1,y_1, x_2,y_2)  \in \{ (n,n+1,n+1,n),(n+1,n,n,n+1)  \},\\
0&: \mbox{ otherwise.}
\end{cases}
\]
We now show that $G=(V,w,m,d_0)$ is a salami. It is clear that $G$ has two ends with infinite measure. We now show non-negative curvature.

Let $(x_1,y_1)\sim (x_2,y_2)$. We first assume $|x_1-x_2| + |y_1-y_2| = 1$.
W.l.o.g., $(x_1,y_1,x_2,y_2) = (0,0,0,1)$.
Let $f \in Lip(1)$ with $f(0,0)=0$ and $f(0,1)=1$.
Then, $f(0,-1) \geq -1$ and $f(-1,0) \geq -1$. Hence, $\Delta f(0,0) \geq f(1,0)-1$.
On the other hand, $f(1,1) \leq f(1,0) + 1$ and $f(1,0)\leq 1$ and thus, $\Delta f(0,1) = f(1,1)-2 + w_0 (f(1,0)-1) \leq f(1,0) - 1 =  \Delta f(0,0)$ showing $\Delta f(0,0)- \Delta f(0,1) \geq 0$ proving $\kappa((0,0),(0,1)) \geq 0$.

For the second case, we assume $(x_1,y_1,x_2,y_2) = (n,n+1,n+1,n)$ and $w_n>0$. W.l.o.g., $n=0$, and hence $(x_1,y_1) = (0,1)$ and $(x_2,y_2)=(1,0)$. Let $f \in Lip(1)$ with $f(0,1)=0$ and $f(1,0)=0$. Then, $\Delta f(1,0) \geq f(0,0)+f(1,1) \geq \Delta f(0,1)$ showing $\kappa((0,1),(1,0)) \geq 0$.

Therefore, $G$ has non-negative curvature which shows that $G$ is a salami. The function $f:V\to \R$, $(x,y) \mapsto x+y$ belongs to $\mathcal H_0$. So we see that $w(p,q)$ is potentially unbounded exactly for the edges $(p,q)$ with $f(p)=f(q)$.
\end{example}

\begin{figure}[htbp]
 \begin{center}
   \begin{tikzpicture}
    \node at (0,0){\includegraphics[width=0.6\linewidth]{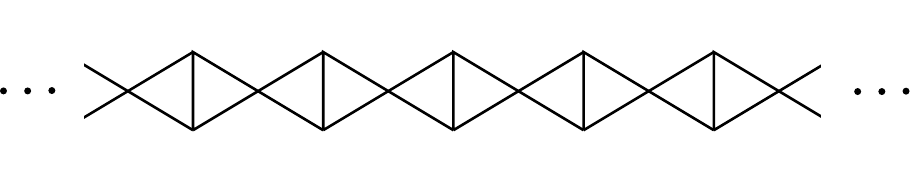}};
   \end{tikzpicture}
  \caption{The figure for Example~\ref{ex:noGlobalEdgeWeightBound} shows a salami where the weight for the vertical edges can be chosen arbitrarily.}\label{fig2}
 \end{center}
\end{figure}

\subsection{Why no minimal cut approach?}
It was mentioned in the introduction that one cannot construct functions in $\mathcal H_0$ as the distance to a minimal cut. The reason is that the distance function always takes integer values, however it can happen that there are no integer-valued functions in $\mathcal H_0$ as shown in the following example.
\begin{example} \label{ex:noIntegerH0}
Let $G=(V,w,m,d_0) $ with
$V= \Z$ and $m \equiv 1$ and
\[
w(x,y) = \begin{cases}
1&: 1 \leq |x-y| \leq 2, \\

0&: \mbox{otherwise}.
\end{cases}
\]
Then, $G$ obviously has two ends of infinite measure. Moreover, $G$ is an abelian Cayley graph with generator set $\{-2,-1,1,2\}$ and therefore has non-negative Ollivier curvature, see \cite[Theorem~3.4]{cushing2021curvatures}. Hence, $G$ is a salami.
Moreover, $f: V \to \R$, $x \mapsto \frac 1 2 x$ belongs to $\mathcal H_0$. showing that there are no integer-valued functions in $\mathcal H_0$ by Theorem~\ref{thm:H0Characterization}.
\end{example}

\begin{figure}[htbp]
 \begin{center}
   \begin{tikzpicture}
    \node at (0,0){\includegraphics[width=0.6\linewidth]{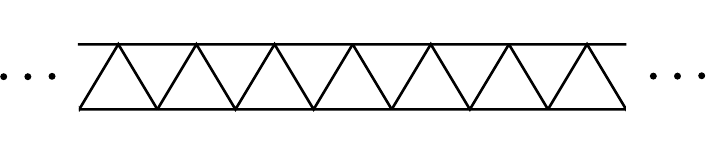}};
   \end{tikzpicture}
  \caption{The figure for Example~\ref{ex:noIntegerH0} shows a graph without integer-valued functions in $\mathcal H_0$.}\label{fig3}
 \end{center}
\end{figure}

\subsection{Why path metrics?}
We show that every non-degenerate birth death chain is a salami when equipped with the right path distance.
\begin{proposition}
Let $G=(\Z,w,m)$ be a birth death chain, i.e., $w(x,y)=0$ whenever $|x-y| \neq 1$. Assume that $w$ is bounded.
Moreover assume $m(\N) = m(-\N)=\infty$.
 Let $d : \Z^2 \to [0,\infty)$ be the path metric given by $d(n,n+1) = \frac{1}{w(n,n+1)}$. Then, $(\Z,w,m,d)$ is a salami.
\end{proposition}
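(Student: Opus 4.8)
The plan is to exhibit an explicit salami partition and then verify non-negative Ollivier curvature edge by edge using the Lipschitz formula for $\kappa$. For the partition I would take $K=\{0\}$, $X=\{n\in\Z:n<0\}$ and $Y=\{n\in\Z:n>0\}$. Conditions (a) and (b) of Definition~\ref{def:salami} are immediate; $E(X,Y)=\emptyset$ because the only edges of a birth death chain join consecutive integers, and any edge meeting both a negative and a positive integer is impossible (the closest such pair $-1,1$ already has distance two in the chain), so every edge crossing $0$ has $0$ as an endpoint; finally $m(X)=\sum_{n<0}m(n)=\infty$ and $m(Y)=\infty$ follow from $m(\N)=m(-\N)=\infty$ after discarding the finite contribution of $m(0)$. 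Before this I would record that $d$ is a genuine path metric with finite balls: since $w$ is bounded, say $w\le W$, every edge length satisfies $d(n,n+1)=1/w(n,n+1)\ge 1/W>0$, hence $d(0,n)\ge |n|/W\to\infty$ and each ball $B_R(0)$ is finite, while connectedness and local finiteness (every vertex has degree two) are clear.

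The heart of the argument is to show $\kappa(n,n+1)\ge 0$ for every $n\in\Z$, and here I would work directly with the Lipschitz definition rather than optimal transport. Write $w_k:=w(k,k+1)$ and $R:=d(n,n+1)=1/w_n$. Fix any $f\in Lip(1)$ with $f(n+1)-f(n)=R$ and normalize $f(n)=0$, so $f(n+1)=1/w_n$. Only the values of $f$ at $n-1,n,n+1,n+2$ enter $\Delta f(n)-\Delta f(n+1)$, and they are constrained by the two $1$-Lipschitz inequalities $f(n-1)\ge f(n)-d(n-1,n)=-1/w_{n-1}$ and $f(n+2)\le f(n+1)+d(n+1,n+2)=1/w_n+1/w_{n+1}$.

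The key point is that the choice $d(n,n+1)=1/w_n$ forces the central terms $w_n(f(n+1)-f(n))=w_n\cdot(1/w_n)=1$ and $w_n(f(n)-f(n+1))=-1$, so that they cancel against the boundary terms. Explicitly,
\[
\Delta f(n)=\frac{1}{m(n)}\bigl(w_{n-1}f(n-1)+1\bigr)\ge \frac{1}{m(n)}\bigl(w_{n-1}\cdot(-1/w_{n-1})+1\bigr)=0,
\]
using the lower bound on $f(n-1)$, while symmetrically
\[
\Delta f(n+1)=\frac{1}{m(n+1)}\bigl(-1+w_{n+1}f(n+2)-w_{n+1}/w_n\bigr)\le 0,
\]
using the upper bound on $f(n+2)$. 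Hence $\Delta f(n)-\Delta f(n+1)\ge 0$ for every admissible $f$, and taking the infimum yields $\kappa(n,n+1)\ge 0$. In fact the signed distance function $f(k)=\pm d(n,k)$ (with sign $+$ on $\{k>n\}$) is $1$-Lipschitz and sharp on every edge, attaining the optimal values $f(n-1)=-1/w_{n-1}$ and $f(n+2)=1/w_n+1/w_{n+1}$, so the infimum is $0$ and $\kappa(n,n+1)=0$; the chain is therefore flat, consistent with Corollary~\ref{cor:flat}.

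I expect the only genuinely delicate point to be this curvature step, and within it the recognition that the normalization $d(n,n+1)=1/w_n$ is exactly what produces the cancellation $w_n(f(n+1)-f(n))=1$: with any other edge length the boundary and central terms would not balance and the sign of $\Delta f(n)-\Delta f(n+1)$ could fail. The verification of the partition and of finiteness of balls is routine. Combining the partition with non-negative curvature shows that $(\Z,w,m,d)$ is a salami.
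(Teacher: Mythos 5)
Your proof is correct and follows essentially the same route as the paper: both verify $\kappa(n,n+1)\geq 0$ directly from the Lipschitz definition, exploiting the cancellation $w(n,n+1)\,d(n,n+1)=1$ to get $\Delta f(n)\geq 0\geq \Delta f(n+1)$ for every admissible $f$, with the two-ends and finite-balls conditions handled as routine checks. Your additional details --- the explicit partition $(\{n<0\},\{n>0\},\{0\})$, the bound $d(0,n)\geq |n|/W$ for finiteness of balls, and the sharpness observation that the signed distance function forces $\kappa(n,n+1)=0$ --- are all sound, just more explicit than the paper's terse version.
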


\begin{proof}
As $w$ is bounded, we see that all balls are finite. By assumption, $G$ has two ends with infinite measure.
Let $n \in \Z$ and let $f\in Lip(1)$ with $f(n+1)-f(n)=d(n,n+1)$.
Then,
\[
\Delta f(n) \geq w(n,n+1)d(n,n+1) - w(n,n-1)d(n,n-1)=0
\]
and $\Delta f(n+1) \leq 0$ similarly. Hence, $\kappa(n,n+1) \geq 0$.
Thus, $(\Z,w,m,d)$ is a salami. This finishes the proof.
\end{proof}

\section*{Acknowledgments}
Both authors want to thank TSIMF in Sanya for their hospitality. 
F. M. wants to thank Mark Kempton, Gabor Lippner, Shing-Tung Yau, Slava Matveev, and J\"urgen Jost for useful discussions, and Caimeng Liu for suggesting the title of the article.
B. H. is supported by NSFC, grants no. 11831004 and no. 11926313.

\printbibliography

	Bobo Hua,\\
	School of Mathematical Sciences, LMNS, Fudan University, Shanghai 200433, China;  Shanghai Center for Mathematical Sciences, Jiangwan Campus, Fudan University, No. 2005 Songhu Road, Shanghai 200438, China. \\
	\texttt{bobohua@fudan.edu.cn}\\

	Florentin Münch, \\
	MPI MiS Leipzig, 04103 Leipzig, Germany\\
	\texttt{florentin.muench@mis.mpg.de}\\

\end{document}